\documentclass[11pt]{article}
\usepackage[margin=1in]{geometry}
\usepackage{amsfonts,amsmath,amssymb}

\PassOptionsToPackage{obeyspaces}{url}
\usepackage[colorlinks=true,citecolor=blue,urlcolor=blue,linkcolor=blue,bookmarksopen=true]{hyperref}
\usepackage{amsthm}
\usepackage{bm}
\usepackage{tikz}

\usepackage{etoolbox}
\patchcmd{\thebibliography}{\leftmargin\labelwidth}{\leftmargin\labelwidth\addtolength\itemsep{-0.1\baselineskip}}{}{}

\usepackage[nameinlink,sort]{cleveref}

\newtheorem{theorem}{Theorem}
\newtheorem{lemma}[theorem]{Lemma}
\newtheorem{corollary}[theorem]{Corollary}

\crefname{conj}{conjecture}{conjectures}

\crefname{claim}{claim}{claims}

\newtheorem{prop}[theorem]{Proposition}
\crefname{prop}{proposition}{propositions}

\theoremstyle{definition}

\newtheorem{defn}[theorem]{Definition}
\crefname{defn}{definition}{definitions}

\crefname{remark}{remark}{remarks}

\crefname{question}{question}{questions}

\crefname{enumi}{part}{parts}

\crefname{equation}{eq.\!}{eqs.\!}

\numberwithin{theorem}{section}

\newcommand*{\eqdef}{\stackrel{\mbox{\normalfont\tiny def}}{=}}   
\newcommand*{\abs}[1]{\lvert #1\rvert}                
\newcommand*{\R}{\mathbb R}
\newcommand*{\Z}{\mathbb Z}
\DeclareMathOperator*{\E}{\mathbb E}

\DeclareMathOperator{\iso}{iso}
\DeclareMathOperator{\Iso}{Iso}

\newcommand*{\slicex}[2]{{#1\{\cdot,#2\}}}
\newcommand*{\slicey}[2]{{#1\{#2,\cdot\}}}
\newcommand*{\rootedIntervals}{\mathcal R}
\newcommand*{\lebesgue}{\mathcal L}

\newcommand*{\cube}[1]{\operatorname{\mathbf C}({#1})}
\newcommand*{\anticube}[1]{\operatorname{\mathbf A}({#1})}
\newcommand*{\ncube}[1]{\operatorname{\mathbf c}[{#1}]}
\newcommand*{\nanticube}[1]{\operatorname{\mathbf a}[{#1}]}
\newcommand*{\bcube}[1]{\operatorname{\mathbf{\hat c}}[{#1}]}
\newcommand*{\banticube}[1]{\operatorname{\mathbf{\hat a}}[{#1}]}
\newcommand*{\qiso}[1]{\operatorname{\mathbf q}[{#1}]}

\title{Isoperimetric inequalities in the $\ell_\infty$ cube and torus}

\author{%
    Ben Baker\thanks{Dept.\ of Mathematics \& Statistics, Auburn University, Auburn, AL, USA. \texttt{\{bzb0122,jgb0059\}@auburn.edu}.}
    \and
    Joseph Briggs\footnotemark[1]
    \and
    Manuel Fernandez V\thanks{Dept.\ of Mathematics, University of Southern California, Los Angeles, CA, USA. \texttt{manuelf7@usc.edu}.}
    \and
    Chris Wells\thanks{Dept.\ of Mathematics \& Statistics, Haverford College, Haverford, PA, USA. \texttt{cwells@haverford.edu}.}
}

\date{}

\begin{document}
\maketitle

\begin{abstract}
    We prove an exact isoperimetric inequality for cubes and tori under the $\ell_\infty$ metric.
    As a corollary, we recover the celebrated grid edge-isoperimetric inequality of Bollob\'as--Leader.
\end{abstract}

\section{Introduction}

For a set $A\subseteq\R^n$ and a positive number $\delta$, define its $\ell_\infty$-neighborhood of radius $\delta$ to be
\[
    N_\delta[A]\eqdef\{x\in\R^n:\lVert x-a\rVert_\infty<\delta\text{ for some $a\in A$}\}.
\]
Our main result is a complete determination of the isoperimetric function when restricted to the $n$-dimensional cube $[0,1]^n$.
Let $\lebesgue^n$ denote the $n$-dimensional Lebesgue measure.

\begin{theorem}\label[theorem]{mainCube}
    For any $0<\alpha\leq 1$ and $0<\delta\leq 1$,
    \[
        \min\bigl\{\lebesgue^n\bigl(N_\delta[A]\cap[0,1]^n\bigr):A\subseteq[0,1]^n,\ \lebesgue^n(A)\geq\alpha\bigr\}=\min_{t\in[n]}\bigl\{1,\bigl(\alpha^{1/t}+\delta\bigr)^t,1-\bigl((1-\alpha)^{1/t}-\delta\bigr)^t\bigr\}.
    \]
\end{theorem}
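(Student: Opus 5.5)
The proof has two halves: an upper bound from explicit constructions and a matching lower bound by compression. For the upper bound, fix $t\in[n]$. The first set is the corner cube $A=[0,\alpha^{1/t}]^t\times[0,1]^{n-t}$. Its neighborhood inside the cube is $[0,\min(1,\alpha^{1/t}+\delta))^t\times[0,1]^{n-t}$, with measure $\min\{1,(\alpha^{1/t}+\delta)^t\}$. The second set is the complement of a corner cube, $A=[0,1]^n\setminus[0,(1-\alpha)^{1/t})^t\times[0,1]^{n-t}$. Its neighborhood misses only the cube of side $(1-\alpha)^{1/t}-\delta$, or nothing if this side is negative. This gives $1-((1-\alpha)^{1/t}-\delta)_+^t$. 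Taking $A=[0,1]^n$ gives the value $1$. Hence the minimum is at most the right-hand side.

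For the lower bound, I would reduce to down-sets by coordinatewise compression. For a measurable $A$ and a coordinate $i$, replace each line in direction $i$ by the initial interval $[0,\ell)$, where $\ell$ is the measure of that line's section. In one dimension, $N_\delta[S]\cap[0,1]$ has measure at least $\min(1,\lebesgue^1(S)+\delta)$, and an initial interval attains this. Combining this with Fubini, and with the fact that $N_\delta$ for the $\ell_\infty$ norm factors as successive one-dimensional neighborhoods, compression does not increase $\lebesgue^n(N_\delta[A]\cap[0,1]^n)$ and preserves $\lebesgue^n(A)$. Iterating over all coordinates, and using a standard limiting argument (compactness in the Hausdorff or $L^1$ sense, or first discretizing to a grid $\{0,\dots,k-1\}^n$), we may assume $A$ is a down-set.

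The core step is to classify the extremal down-sets. I would work on the discrete grid $[k]^n$ with $\delta=d/k$, where $N_\delta$ becomes the $\ell_\infty$ $d$-neighborhood, and then let $k\to\infty$. In the grid, the plan is induction on $n$. Slice $A$ along the last coordinate into down-sets $A_0\supseteq A_1\supseteq\cdots$. The neighborhood's slice at height $j$ is $N_d[A_{\max(0,j-d)}]$, the neighborhood of the largest nearby slice, which lies in $(n-1)$ dimensions. By induction, each slice contributes at least $f_{n-1}(|A_{j-d}|)$, where $f_{n-1}$ is the $(n-1)$-dimensional right-hand side. The problem then becomes a one-dimensional optimization: minimize $\sum_j f_{n-1}(a_{j-d})$ over nonincreasing sequences $(a_j)$ with fixed sum. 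This is the main obstacle. The function $f_{n-1}$ is neither convex nor concave, and one must show that the optimum is a "product" profile matching some $t$: either all slices equal, giving $t\le n-1$, or a profile reproducing $(\alpha^{1/t}+\delta)^t$ or its complementary form with $t=n$.

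To handle this obstacle, I would first try to reduce to extremal sequences by a local exchange or convexification argument. The idea is to show that any optimal sequence takes at most two distinct nonzero values, or is constant on a terminal block. After that reduction, a direct calculus comparison finishes the argument. The comparison uses the identity that, in the continuum limit, $(\alpha^{1/t}+\delta)^t$ is the one-dimensional extension of $(\cdot)^{1/(t-1)}$-type profiles. A fallback is to establish the continuous version directly, via a Brunn–Minkowski-type estimate on the sections $A_y$, where $N_\delta[A]_y\supseteq N_\delta[A_{y-\delta}]$. Extremality of the three families would then be checked by a Lagrange-multiplier analysis of the profile $y\mapsto\lebesgue^{n-1}(A_y)$.
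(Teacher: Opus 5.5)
Your upper-bound constructions are correct. They need the positive part in the anti-cuboid term, as you use it; without it the displayed formula is wrong for even $t$ when $(1-\alpha)^{1/t}<\delta$. Your setup for the lower bound matches the paper's (\Cref{1d}, \Cref{compressionsAreBetter}, \Cref{cubeSlices}): compress along the last coordinate so the slices $A_y$ are nested, note that the neighborhood's slice at height $y$ has the measure of $N_\delta[A_{\max(0,y-\delta)}]$ for a.e.\ $y$, and bound each slice by induction. The full down-set compression and the detour through $[k]^n$ are unnecessary.

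The gap is the step you call the main obstacle: minimizing $\delta f_{n-1}(u(0))+\int_0^{1-\delta}f_{n-1}(u(t))\,dt$ over nonincreasing $u$ with $\int_0^1u=\alpha$. You never carry this out. ``Local exchange or convexification'' and ``a direct calculus comparison'' are placeholders. A Brunn--Minkowski estimate cannot see the boundary effects that produce the $t<n$ and anti-cuboid phases.

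Your diagnosis is also wrong. $f_{n-1}$, with $f_{n-1}(0)=0$, \emph{is} concave on $[0,1]$: each branch $(x^{1/t}+\delta)^t$ and $1-\bigl((1-x)^{1/t}-\delta\bigr)_+^t$ is concave, and a minimum of concave functions is concave (\Cref{qisoIsNice}). That concavity is the missing idea, and it solves the one-dimensional problem in two steps:
\begin{enumerate}
\item By Jensen (\Cref{concaveExpectation}), the integral is at least the corresponding quantity for a two-step profile, with level $y$ on $[0,\xi)$ and level $x$ afterwards, where $\xi y+(1-\xi)x=\alpha$.
\item The map $x\mapsto(1-\delta-\xi)f_{n-1}(x)+(\delta+\xi)f_{n-1}\bigl((\alpha-(1-\xi)x)/\xi\bigr)$ is concave along this constraint. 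Its minimum is therefore at an endpoint: constant slices, $x=0$, or $y=1$.
\end{enumerate}
So the only candidates are $S\times[0,1]$, $S\times[0,\xi)$ and $(\Omega\times[0,\xi))\cup(S\times[0,1])$, with $S$ optimal in dimension $n-1$ (\Cref{mainBound}).

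Even granting this reduction, your list of extremal profiles is incomplete. Take $S$ an anti-cuboid in $S\times[0,\xi)$, or a cuboid in the union form. The resulting set is neither a cuboid nor an anti-cuboid. You must show its neighborhood measure, e.g.\ $(\delta+\xi)\bigl(1-((1-\alpha/\xi)^{1/t}-\delta)^t\bigr)$, never beats the formula. The paper does not do this by calculus:
\begin{enumerate}
\item It re-slices such a set along the first coordinate, where the set again has a two-step profile with both levels in $(0,1)$.
\item The equality case in \Cref{optimalStep} then forces the $(n-1)$-dimensional profile to be affine on an interval.
\item \Cref{notAffine} says the profile is affine on an interval only where it equals $1$ or $s\mapsto s+\delta$, so the value must be $1$.
\end{enumerate}
Only after this does the Lagrange-multiplier optimization over cuboids and anti-cuboids (\Cref{balancedCuboidsAreBest,balancedAnticuboidsAreBest}) produce the exponents $t$ in the formula.
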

A class of optimizers here have the form either $A=C$ or $A=[0,1]^n\setminus C$ where $C=[0,\alpha^{1/t}]^t\times[0,1]^{n-t}$ for some $t$.
A depiction of \Cref{mainCube} is shown in \Cref{fig:phases}.

\begin{figure}[ht]
    \begin{center}
        \begin{tikzpicture}
            \draw[->] (0,0,0)--(2,0,0);
            \draw[->] (0,0,0)--(0,2,0);
            \draw[->] (0,0,0)--(0,0,2);

            \draw[fill=blue,opacity=0.7] (1,0,0)--(1,1,0)--(0,1,0)--(0,1,1)--(0,0,1)--(1,0,1)--cycle;
            \draw (1,1,1)--(1,1,0);
            \draw (1,1,1)--(1,0,1);
            \draw (1,1,1)--(0,1,1);

            \draw[fill=green,opacity=0.4] (1.25,0,0)--(1.25,1.25,0)--(0,1.25,0)--(0,1.25,1.25)--(0,0,1.25)--(1.25,0,1.25)--cycle;
            \draw[dashed] (1.25,1.25,1.25)--(1.25,1.25,0);
            \draw[dashed] (1.25,1.25,1.25)--(1.25,0,1.25);
            \draw[dashed] (1.25,1.25,1.25)--(0,1.25,1.25);

            \draw[dotted] (2,0,0)--(2,2,0)--(0,2,0)--(0,2,2)--(0,0,2)--(2,0,2)--cycle;
            \draw[dotted] (2,2,2)--(2,2,0);
            \draw[dotted] (2,2,2)--(2,0,2);
            \draw[dotted] (2,2,2)--(0,2,2);
        \end{tikzpicture}
        \begin{tikzpicture}
            \draw[->] (0,0,0)--(2,0,0);
            \draw[->] (0,0,0)--(0,2,0);
            \draw[->] (0,0,0)--(0,0,2);

            \draw[fill=blue,opacity=0.7] (1,0,0)--(1,2,0)--(0,2,0)--(0,2,1)--(0,0,1)--(1,0,1)--cycle;
            \draw (1,2,1)--(1,2,0);
            \draw (1,2,1)--(1,0,1);
            \draw (1,2,1)--(0,2,1);

            \draw[fill=green,opacity=0.4](1.25,0,0)--(1.25,2,0)--(1,2,0)--(1,2,1)--(0,2,1)--(0,2,1.25)--(0,0,1.25)--(1.25,0,1.25)--cycle;
            \draw[dashed] (1.25,2,1.25)--(1.25,2,0);
            \draw[dashed] (1.25,2,1.25)--(1.25,0,1.25);
            \draw[dashed] (1.25,2,1.25)--(0,2,1.25);

            \draw[dotted] (2,0,0)--(2,2,0)--(0,2,0)--(0,2,2)--(0,0,2)--(2,0,2)--cycle;
            \draw[dotted] (2,2,2)--(2,2,0);
            \draw[dotted] (2,2,2)--(2,0,2);
            \draw[dotted] (2,2,2)--(0,2,2);
        \end{tikzpicture}
        \begin{tikzpicture}
            \draw[->] (0,0,0)--(2,0,0);
            \draw[->] (0,0,0)--(0,2,0);
            \draw[->] (0,0,0)--(0,0,2);

            \draw[fill=blue,opacity=0.7] (1,0,0)--(1,2,0)--(0,2,0)--(0,2,2)--(0,0,2)--(1,0,2)--cycle;
            \draw (1,2,2)--(1,2,0);
            \draw (1,2,2)--(1,0,2);
            \draw (1,2,2)--(0,2,2);

            \draw[fill=green,opacity=0.4] (1.25,0,0)--(1.25,2,0)--(1,2,0)--(1,2,2)--(1,0,2)--(1.25,0,2)--cycle;
            \draw[dashed] (1.25,2,2)--(1.25,2,0);
            \draw[dashed] (1.25,2,2)--(1.25,0,2);
            \draw[dashed] (1.25,2,2)--(1,2,2);

            \draw[dotted] (2,0,0)--(2,2,0)--(0,2,0)--(0,2,2)--(0,0,2)--(2,0,2)--cycle;
            \draw[dotted] (2,2,2)--(2,2,0);
            \draw[dotted] (2,2,2)--(2,0,2);
            \draw[dotted] (2,2,2)--(0,2,2);
        \end{tikzpicture}
        \begin{tikzpicture}
            \draw[->] (0,0,0)--(2,0,0);
            \draw[->] (0,0,0)--(0,2,0);
            \draw[->] (0,0,0)--(0,0,2);

            \draw[fill=blue,opacity=0.7] (2,0,0)--(2,2,0)--(0,2,0)--(0,2,2)--(0,0,2)--(1,0,2)--(1,0,1)--(2,0,1)--cycle;
            \draw (1,2,2)--(1,2,1);
            \draw (1,2,2)--(1,0,2);
            \draw (1,2,2)--(0,2,2);
            \draw (2,2,1)--(1,2,1);
            \draw (2,2,1)--(2,2,0);
            \draw (2,2,1)--(2,0,1);
            \draw (1,2,1)--(1,0,1);

            \draw[fill=green,opacity=0.4] (2,0,1)--(2,2,1)--(1,2,1)--(1,2,2)--(1,0,2)--(1.25,0,2)--(1.25,0,1.25)--(2,0,1.25)--cycle;
            \draw[dashed] (1.25,2,2)--(1.25,2,1.25);
            \draw[dashed] (1.25,2,2)--(1.25,0,2);
            \draw[dashed] (1.25,2,2)--(1,2,2);
            \draw[dashed] (2,2,1.25)--(1.25,2,1.25);
            \draw[dashed] (2,2,1.25)--(2,2,1);
            \draw[dashed] (2,2,1.25)--(2,0,1.25);
            \draw[dashed] (1.25,2,1.25)--(1.25,0,1.25);

            \draw[dotted] (2,0,0)--(2,2,0)--(0,2,0)--(0,2,2)--(0,0,2)--(2,0,2)--cycle;
            \draw[dotted] (2,2,2)--(2,2,0);
            \draw[dotted] (2,2,2)--(2,0,2);
            \draw[dotted] (2,2,2)--(0,2,2);
        \end{tikzpicture}
        \begin{tikzpicture}
            \draw[->] (0,0,0)--(2,0,0);
            \draw[->] (0,0,0)--(0,2,0);
            \draw[->] (0,0,0)--(0,0,2);

            \draw[fill=blue,opacity=0.7] (2,0,0)--(2,2,0)--(0,2,0)--(0,2,2)--(0,0,2)--(2,0,2)--cycle;
            \draw (1,2,2)--(1,2,1);
            \draw (1,2,2)--(1,1,2);
            \draw (1,2,2)--(0,2,2);
            \draw (2,2,1)--(1,2,1);
            \draw (2,2,1)--(2,2,0);
            \draw (2,2,1)--(2,1,1);
            \draw (1,2,1)--(1,1,1);
            \draw (2,1,1)--(1,1,1)--(1,1,2)--(2,1,2)--cycle;
            \draw (2,1,2)--(2,0,2);

            \draw[fill=green,opacity=0.4] (2,1,1)--(2,2,1)--(1,2,1)--(1,2,2)--(1,1,2)--(2,1,2)--cycle;
            \draw[dashed] (1.25,2,2)--(1.25,2,1.25);
            \draw[dashed] (1.25,2,2)--(1.25,1.25,2);
            \draw[dashed] (1.25,2,2)--(1,2,2);
            \draw[dashed] (2,2,1.25)--(1.25,2,1.25);
            \draw[dashed] (2,2,1.25)--(2,2,1);
            \draw[dashed] (2,2,1.25)--(2,1.25,1.25);
            \draw[dashed] (1.25,2,1.25)--(1.25,1.25,1.25);
            \draw[dashed] (2,1.25,1.25)--(1.25,1.25,1.25)--(1.25,1.25,2)--(2,1.25,2)--cycle;
            \draw[dashed] (2,1.25,2)--(2,1,2);

            \draw[dotted] (2,0,0)--(2,2,0)--(0,2,0)--(0,2,2)--(0,0,2)--(2,0,2)--cycle;
            \draw[dotted] (2,2,2)--(2,2,0);
            \draw[dotted] (2,2,2)--(2,0,2);
            \draw[dotted] (2,2,2)--(0,2,2);
        \end{tikzpicture}
    \end{center}
    \caption{\label{fig:phases}A depiction of the phases of the isoperimetric inequality in \Cref{mainCube}.}
\end{figure}

A similar result holds for the $n$-dimensional torus $\R^n/\Z^n$:
\begin{theorem}\label[theorem]{mainTorus}
    For any $0<\alpha\leq 1$ and any $0<\delta\leq 1/2$,
    \[
        \min\bigl\{\lebesgue^n\bigl(N_\delta[A]/\Z^n\bigr):A\subseteq\R^n/\Z^n,\ \lebesgue^n(A)\geq\alpha\bigr\}=\min_{t\in[n]}\bigl\{1,\bigl(\alpha^{1/t}+2\delta\bigr)^t,1-\bigl((1-\alpha)^{1/t}-2\delta\bigr)^t\bigr\}.
    \]
\end{theorem}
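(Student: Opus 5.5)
The plan is to deduce \Cref{mainTorus} from \Cref{mainCube} by an unfolding and rescaling argument, handling the upper and lower bounds separately.

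\textbf{Upper bound.} Fix $t\in[n]$ and let $s=\alpha^{1/t}$. Take $A=[0,s]^t\times(\R/\Z)^{n-t}$. Its neighborhood is $[-\delta,s+\delta]^t\times(\R/\Z)^{n-t}$, taken modulo $\Z^n$, so its measure is $\min\{1,(s+2\delta)^t\}$. Next take $A$ to be the complement of $[0,(1-\alpha)^{1/t}]^t\times(\R/\Z)^{n-t}$. Its neighborhood omits exactly the shrunken box $[\delta,(1-\alpha)^{1/t}-\delta]^t\times(\R/\Z)^{n-t}$, which has measure $\max\{0,((1-\alpha)^{1/t}-2\delta)\}^t$. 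Finally $A$ equal to the whole torus gives $1$. Together these give ``$\le$''.

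\textbf{Lower bound.} The idea is that the torus, viewed through its symmetric structure, looks like a cube of half the side in which $\delta$ acts as $2\delta$. The cleanest route is to use whatever compression or symmetrization machinery the paper develops for \Cref{mainCube}, presumably a reduction to down-sets by one-dimensional steps. In the torus I would instead use circular (two-sided) symmetrization in each coordinate: each fiber set $B\subseteq\R/\Z$ is replaced by an arc centered at $0$ of the same measure. The one-dimensional fact needed is that an arc minimizes $\lebesgue^1(N_\delta[B])$ among sets of measure $\beta$. The minimum value is $\min\{1,\beta+2\delta\}$, because any nonempty set on the circle that is not full gains at least $2\delta$ in its neighborhood.

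I would then check that circular symmetrization in one coordinate does not increase the neighborhood measure. This is the standard fiberwise argument: the neighborhood of the symmetrized set is contained in the symmetrization of neighborhoods of nearby fibers, with unions of arcs centered at $0$ handled monotonically. Iterating to a fixed point, via compactness and a limiting argument in the Hausdorff or measure sense, yields a set that is symmetric, decreasing in $\lvert x_i\rvert$ in each coordinate. Such a set is determined by its intersection with $[0,1/2]^n$, where it is a down-set $D$.

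For such a symmetric down-set, its $\ell_\infty$ neighborhood on the torus with radius $\delta$ is the reflected copy of $N_\delta[D]\cap[0,1/2]^n$, since reflection symmetry handles the other orthants. Rescaling $[0,1/2]^n$ to $[0,1]^n$ multiplies lengths by $2$. Measures are preserved as proportions, $\alpha$ stays $\alpha$, and $\delta$ becomes $2\delta\le1$. \Cref{mainCube} then gives exactly the right-hand side with $2\delta$.

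\textbf{Main obstacle.} The hard step is proving that circular symmetrization is monotone for $\ell_\infty$ neighborhoods in the torus, together with convergence of the iteration. If a direct product-structure argument fails, I would first try to reuse the exact one-dimensional compression lemma from the cube proof, with $[0,1]$ replaced by the circle. I would also verify that the edge case where arcs wrap around, giving $\beta+2\delta\ge1$, is captured by the ``$1$'' term.
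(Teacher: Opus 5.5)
Your argument is correct, but it reduces the torus to the cube by a different mechanism than the paper.

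The paper never symmetrizes about a center. It compresses the circle coordinate onto one-sided rooted intervals $[0,x)$ (the iso-nesting of \Cref{1d}). \Cref{cubeSlices} then shows that, for an $\rootedIntervals$-layered set, the neighborhood measure in $\Omega\boxtimes S(\delta)$ equals $2\delta\,n_A(0)+\int_0^{1-2\delta}n_A(t)\,dt$. This is exactly the formula for $\Omega\boxtimes I(2\delta)$, so $\iso_{\Omega\boxtimes S(\delta)}=\iso_{\Omega\boxtimes I(2\delta)}$ for an arbitrary base space $\Omega$ (\Cref{circle=interval1,circle=interval}). Consequently the torus, and every mixed product of circles and intervals with unequal radii, is handled by the machinery already built for the cube, and the cuboid/anti-cuboid optimizers carry over.

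Your route uses centered arcs plus the folding map $\phi(y)=(\lVert y_1\rVert,\dots,\lVert y_n\rVert)$, where $\lVert\cdot\rVert$ is circular distance to $0$. It is more geometric: it explains the factor $2$ as ``the torus is a reflected copy of a half-size cube.'' Combined with your explicit constructions, it proves the theorem; you correctly write $\max\{0,\cdot\}$ in the anti-cuboid term there.

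Two points would tighten it.

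First, the step you flag as the main obstacle is already available. The map $F(B)=\{y:2\lVert y\rVert<\lebesgue(B)\}$ is an iso-nesting of $S(\delta)$, by the circle half of \Cref{1d} together with rotation invariance. So \Cref{compressionsAreBetter} gives monotonicity of a single symmetrization directly.

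Second, drop the iteration to a fixed point. It is unnecessary, and a limiting argument would be the hardest part to justify, since neighborhood measure is not continuous under measure or Hausdorff convergence.
\begin{itemize}
\item Symmetrizing in coordinate $j$ preserves invariance under $x_i\mapsto -x_i$ for every $i\neq j$, because the $j$-fibers over a point and over its $i$-reflection coincide.
\item Hence one pass through the $n$ coordinates yields a set $A$ invariant under all coordinate reflections, and that is all the unfolding uses.
\item The map $\phi$ is $1$-Lipschitz from the torus to $[0,1/2]^n$, fixes $[0,1/2]^n$, and sends $A$ into $D=A\cap[0,1/2]^n$. Therefore the torus $\delta$-neighborhood of $A$ and the $\ell_\infty$ $\delta$-neighborhood of $D$ agree on $[0,1/2]^n$.
\end{itemize}
The down-set property is never needed, because \Cref{mainCube} applies to arbitrary measurable sets.
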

A class of optimizers here have the form either $A=C+\Z^n$ or $A=\bigl([0,1)^n\setminus C\bigr)+\Z^n$ where $C=[0,\alpha^{1/t}]^t\times[0,1]^{n-t}$ for some $t$.

We actually prove a broad generalization of these two theorems which considers products of intervals and circles of various scales.
\medskip

These results yield approximate graph isoperimetric inequalities for strong products of paths and cycles.
For graphs $G_1,\dots,G_n$, the strong product $G_1\boxtimes\dots\boxtimes G_n$ has vertex-set $V(G_1)\times\dots\times V(G_n)$ and $(v_1,\dots,v_n)$ is adjacent to $(u_1,\dots,u_n)$ whenever, for each $i\in[n]$, either $v_i=u_i$ or $v_iu_i$ is an edge in $G_i$.
We write $G^{\boxtimes n}$ to denote the $n$-fold strong product of $G$ with itself.
For a graph $G$ and a subset $A\subseteq V(G)$, let $N_G[A]$ denote the closed neighborhood of $A$, that is all vertices in $A$ or adjacent to some vertex in $A$.
Let $C_k$ denote the cycle graph on $k$ vertices.
\begin{corollary}\label[corollary]{mainCycles}
    For any positive integers $k,n$, if $A\subseteq V(C_k^{\boxtimes n})$ has size $\abs A=\alpha k^n$, then
    \[
        \bigl\lvert N_{C_k^{\boxtimes n}}[A]\bigr\rvert\geq k^n\cdot\min_{t\in[n]}\bigl\{1,\bigl(\alpha^{1/t}+2/k\bigr)^t,1-\bigl((1-\alpha)^{1/t}-2/k\bigr)^t\bigr\}.
    \]
\end{corollary}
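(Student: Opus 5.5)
We deduce the corollary from \Cref{mainTorus} by embedding the discrete torus into $\R^n/\Z^n$. Identify $V(C_k)$ with $\Z/k\Z$, and to each vertex $v=(v_1,\dots,v_n)\in(\Z/k\Z)^n$ associate the half-open box
\[
Q_v\eqdef \prod_{i=1}^n\bigl[v_i/k,(v_i+1)/k\bigr)+\Z^n\subseteq\R^n/\Z^n,
\]
which has measure $k^{-n}$; these boxes partition the torus. Given $A\subseteq V(C_k^{\boxtimes n})$ with $\abs A=\alpha k^n$, set $\tilde A\eqdef\bigcup_{v\in A}Q_v$, so that $\lebesgue^n(\tilde A)=\alpha$.

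The key step is to show that, for $\delta=1/k$,
\[
N_\delta[\tilde A]/\Z^n\subseteq\bigcup_{u\in N_{C_k^{\boxtimes n}}[A]}Q_u .
\]
Let $x\in Q_u$ satisfy $\lVert x-a\rVert_\infty<1/k$ (distance taken in the torus) for some $a\in Q_v$ with $v\in A$. Fix a coordinate $i$. Since $x_i\in[u_i/k,(u_i+1)/k)$ and $a_i\in[v_i/k,(v_i+1)/k)$ modulo $1$, their circular distance being less than $1/k$ forces $u_i-v_i\in\{-1,0,1\}\pmod k$. Indeed, if the cells were at cyclic index distance at least $2$, the gap between them would contain a full cell of length $1/k$ and so would separate the points by at least $1/k$. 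Hence $u_i=v_i$ or $u_iv_i\in E(C_k)$ for every $i$, which means $u\in N_{C_k^{\boxtimes n}}[A]$. Taking measures gives
\[
\lebesgue^n\bigl(N_{1/k}[\tilde A]/\Z^n\bigr)\le k^{-n}\bigl\lvert N_{C_k^{\boxtimes n}}[A]\bigr\rvert .
\]

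Now apply \Cref{mainTorus} with this $\alpha$ and $\delta=1/k$, which lies in $(0,1/2]$ when $k\ge2$. The minimum over sets of measure at least $\alpha$ is at most $\lebesgue^n(N_{1/k}[\tilde A]/\Z^n)$, and multiplying by $k^n$ gives exactly the claimed bound with $2\delta=2/k$. The degenerate cases need a separate check.
\begin{itemize}
\item If $\alpha=0$, then $A$ is empty and the right-hand side is $0$ at $t=1$, so the bound holds trivially.
\item If $k=1$, the graph has a single vertex and $\alpha=1$, so both sides equal $1$.
\item If $k=2$, the cycle $C_2$ is a single edge (or a multigraph, which is fine), and every pair of values is adjacent or equal, so the statement still goes through.
\end{itemize}

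The main obstacle is the neighborhood containment, specifically handling the strict inequality and the half-open cells at the wraparound. One also has to confirm that small $k$ (e.g.\ $k\le 2$) causes no trouble with the $\ell_\infty$ distance modulo $1$. The boundary-point subtlety is harmless because the strict inequality $<1/k$ excludes points exactly $1/k$ away, and any remaining measure-zero discrepancies do not affect the measure comparison.
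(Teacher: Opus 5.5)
Your main argument is correct and follows the paper's route. The paper obtains this corollary from \Cref{thm:discretize} with $d_i=1$, $G_i=C_k$ and $X_i=S(1/k)$, using exactly your half-open cells, and then applies \Cref{mainTorus} at $\delta=1/k$. You prove only the inclusion $N_{1/k}[\tilde A]\subseteq\bigcup_{u\in N[A]}Q_u$ rather than the paper's equality up to measure zero, and that inclusion is all that is needed. Your cyclic-gap argument for it is sound: both arcs between the two points traverse a full cell.

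The false step is your treatment of $\alpha=0$. The right-hand side is not $0$ at $t=1$. There the bracket is $\min\{1,\,0+2/k,\,1-(1-2/k)\}=2/k$ for $k\geq 2$. In fact every term $\min\{1,(2/k)^t,1-(1-2/k)^t\}$ is positive, so the right-hand side is strictly positive while $\bigl\lvert N_{C_k^{\boxtimes n}}[\varnothing]\bigr\rvert=0$. For example, $k=3$, $n=1$ would require $0\geq 3\cdot\tfrac23=2$.

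So the corollary is simply false for $A=\varnothing$ and must be read with $\alpha>0$. That is exactly the range $0<\alpha\leq 1$ in which \Cref{mainTorus} applies, and the paper's $\qiso{\delta_1,\dots,\delta_n}$ likewise defines the value at $0$ separately. You should state this restriction explicitly rather than call the case trivial.

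A smaller inaccuracy concerns $k=1$: it is not true that both sides equal $1$ once $n\geq 2$. The $t=2$ term $1-(0-2)^2=-3$ makes the right-hand side negative, so the inequality still holds, but not with equality.
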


Let $P_k$ denote the path graph on $k$ vertices.
\begin{corollary}\label[corollary]{mainPaths}
    For any positive integers $k,n$, if $A\subseteq V(P_k^{\boxtimes n})$ has size $\abs A=\alpha k^n$, then
    \[
        \bigl\lvert N_{P_k^{\boxtimes n}}[A]\bigr\rvert\geq k^n\cdot\min_{t\in[n]}\bigl\{1,\bigl(\alpha^{1/t}+1/k\bigr)^t,1-\bigl((1-\alpha)^{1/t}-1/k\bigr)^t\bigr\}.
    \]
\end{corollary}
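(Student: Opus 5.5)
We deduce \Cref{mainPaths} from \Cref{mainCube} by a discretization argument: blow up each vertex of $P_k^{\boxtimes n}$ to a small cube. Identify $V(P_k)$ with $\{0,1,\dots,k-1\}$. For $v\in V(P_k^{\boxtimes n})$ let $Q_v\eqdef\prod_{i=1}^n[v_i/k,(v_i+1)/k)$. These $k^n$ half-open cubes of side $1/k$ tile $[0,1)^n$, and each has measure $k^{-n}$. Given $A\subseteq V(P_k^{\boxtimes n})$ with $\abs A=\alpha k^n$, set
\[
    \tilde A\eqdef\bigcup_{v\in A}Q_v\subseteq[0,1]^n ,
\]
so that $\lebesgue^n(\tilde A)=\alpha$.

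The key step is the containment
\[
    N_{1/k}[\tilde A]\cap[0,1]^n\subseteq\bigcup_{u\in N[A]}\overline{Q_u},
\]
where $N[A]=N_{P_k^{\boxtimes n}}[A]$. To see it, take $x\in[0,1]^n$ with $\lVert x-a\rVert_\infty<1/k$ for some $a\in Q_v$, $v\in A$. Choose $u$ with $x\in\overline{Q_u}$, picking $u_i=\min(\lfloor kx_i\rfloor,k-1)$.

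Now check $\abs{u_i-v_i}\leq1$ coordinatewise. We have $a_i\in[v_i/k,(v_i+1)/k)$ and $\abs{x_i-a_i}<1/k$, so $x_i\in(v_i/k-1/k,\,(v_i+2)/k)$. This gives $kx_i\in(v_i-1,v_i+2)$, hence $\lfloor kx_i\rfloor\in\{v_i-1,v_i,v_i+1\}$. Capping at $k-1$ only helps, since $v_i\leq k-1$. Thus $u_i$ equals $v_i$ or is adjacent to it in $P_k$ for every $i$, which is exactly adjacency (or equality) in the strong product. So $u\in N[A]$.

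Since boundaries of cubes are null sets, the containment gives
\[
    \lebesgue^n\bigl(N_{1/k}[\tilde A]\cap[0,1]^n\bigr)\leq\abs{N[A]}\,k^{-n}.
\]
Applying \Cref{mainCube} with $\delta=1/k\leq1$ and measure $\alpha$ bounds the left side below by the stated minimum, which gives the corollary. The degenerate case $\alpha=0$ is trivial, since then the right-hand side is at most $0$ (or the statement is vacuous); we assume $\alpha>0$ so that \Cref{mainCube} applies.

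The only delicate point is the boundary bookkeeping: the strict inequality in the definition of $N_\delta$ together with the half-open cubes keeps $\lfloor kx_i\rfloor$ from reaching $v_i\pm2$, and the top face $x_i=1$ is handled by the cap at $k-1$. \Cref{mainCycles} would follow the same way from \Cref{mainTorus}, with indices taken mod $k$ and $\delta=1/k\leq1/2$ (for $k\geq2$). There the torus formula already carries the $2\delta=2/k$.
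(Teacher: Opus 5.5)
Your argument is correct and is essentially the paper's proof: the paper deduces this corollary from \Cref{thm:discretize} (the same blow-up of each vertex to a half-open cube of side $1/k$, here with $X_i=I(1/k)$) combined with \Cref{isoperimetricBound}, i.e.\ \Cref{mainCube} with $\delta=1/k$; your explicit coordinatewise containment check is the inequality half of the paper's assertion that $f(N[S])=N[f(S)]$ up to a null set. One small correction: for $A=\varnothing$ the right-hand side is $k^n\cdot k^{-n}=1>0$, not at most $0$, so that case is not trivially true. It must simply be excluded (the statement implicitly needs $\alpha>0$, just as \Cref{mainCube} does).
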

Again, we actually prove a broad generalization of these corollaries which applies to strong products of cycles and paths of various lengths.

We note that recently, in {\cite[Theorem 1.4]{wang2025_2D}}, Wang determined the minimum of $|N_{P_n \boxtimes P_m}[A]|$ for \emph{any} choice of $|A|$.
He also commented that the $n$-dimensional case $\boxtimes_{i=1}^n P_{m_i}$ ``appears to be very challenging''.
While \Cref{mainCube} addresses arbitrary dimensions in the sense of Problems 4.2 and 4.3 of \cite{wang2025_2D}, and also accounts for different path lengths, it does not fully generalize Theorem 1.4 of \cite{wang2025_2D}, which would require finer approximations.
In a similar vein, for the bi-infinite path $P_\infty$, Radcliffe--Veomett \cite{radcliffe_vertex} determined precisely the minimum of $|N_{P_\infty^{\boxtimes n}}[A]|$, for every $|A|$ and every $n$.
There is a tiny amount of overlap with their theorem and \Cref{mainCycles} in the special case where $|A|=a^n$ because the local structures of $C_k^{\boxtimes n}$ (for sufficiently large $k$) and $P_\infty^{\boxtimes n}$ agree everywhere, and the optimal sets are translations of subgrids $[a]^n$ in both cases.
However, this special case of both of our theorems follows more directly from the Brunn--Minkowski inequality.

\medskip

Lastly, we show that the celebrated isoperimetric inequality of Bollob\'as--Leader~\cite{bollobas_edge} (independently rediscovered for $\mathbb{R}^3$ by Chambers--Mouill\'e \cite{chambers2023_3D}) is a corollary of our results.
A set is called \emph{rectilinear} if it is a finite union of boxes.

\begin{theorem}[Bollob\'as--Leader~\cite{bollobas_edge}]
    For any rectilinear set $A\subseteq[0,1]^n$, there is a set $X$ of the form either $[0,\alpha]^t\times[0,1]^{n-t}$ or $[0,1]^n\setminus\bigl([0,\alpha]^t\times[0,1]^{n-t}\bigr)$ such that $\lebesgue^n(X)=\lebesgue^n(A)$ and the exposed surface area of $\partial X\setminus\partial[0,1]^n$ being at most the exposed surface area of $\partial A\setminus\partial[0,1]^n$.
\end{theorem}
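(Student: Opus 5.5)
We will derive the Bollob\'as--Leader inequality from \Cref{mainCube} by a Minkowski-content argument, letting $\delta\to0$. Fix a rectilinear $A\subseteq[0,1]^n$ with $\lebesgue^n(A)=\alpha$, where $0<\alpha<1$ (the cases $\alpha\in\{0,1\}$ are trivial, since then $A$ is null or conull and we may take $X=\emptyset$ or $X=[0,1]^n$). Write $S(A)$ for the $(n-1)$-dimensional measure of $\partial A\setminus\partial[0,1]^n$. We make two observations.
\begin{enumerate}
\item The first is an upper bound on growth. Since $A$ is a finite union of boxes, modifying $A$ on a null set does not change $S(A)$ in the rectilinear sense, so we may assume $A$ is closed. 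Then $N_\delta[A]\cap[0,1]^n\setminus A$ is contained in the union, over the exposed facets $F$ of $\partial A$ not lying in $\partial[0,1]^n$, of the slab of thickness $\delta$ on the outer side of $F$, together with a set of measure $O(\delta^2)$ near lower-dimensional faces. The point is that in $\ell_\infty$ a point within distance $\delta$ of $A$ but outside $A$ is within $\delta$ of some exposed facet in the normal direction, except near edges. This gives
\[
\lebesgue^n\bigl(N_\delta[A]\cap[0,1]^n\bigr)\le \alpha+\delta S(A)+O_A(\delta^2).
\]
\item The second is a lower bound from \Cref{mainCube}. For each $t\in[n]$ we expand:
\[
(\alpha^{1/t}+\delta)^t=\alpha+t\alpha^{(t-1)/t}\delta+O(\delta^2),\qquad 1-\bigl((1-\alpha)^{1/t}-\delta\bigr)^t=\alpha+t(1-\alpha)^{(t-1)/t}\delta+O(\delta^2),
\]
while the term $1$ exceeds $\alpha$ by a constant. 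Hence the minimum in \Cref{mainCube} equals $\alpha+\delta m+O(\delta^2)$, where
\[
m=\min_t\min\{t\alpha^{(t-1)/t},\,t(1-\alpha)^{(t-1)/t}\}.
\]
\end{enumerate}

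Combining the two bounds, dividing by $\delta$, and letting $\delta\to0$ gives $S(A)\ge m$. It remains to exhibit the extremal set. If the minimum is attained by $t\alpha^{(t-1)/t}$, take $X=[0,a]^t\times[0,1]^{n-t}$ with $a=\alpha^{1/t}$. Its exposed boundary consists of $t$ facets, each of measure $a^{t-1}$, so $S(X)=t\alpha^{(t-1)/t}$. In the other case, take the complement of $[0,b]^t\times[0,1]^{n-t}$ with $b=(1-\alpha)^{1/t}$. It has measure $\alpha$, and $S(X)=t(1-\alpha)^{(t-1)/t}$, because the complement in the cube shares the same exposed boundary. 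Either way $\lebesgue^n(X)=\alpha$ and $S(X)=m\le S(A)$.

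The main obstacle is making the upper bound rigorous. We must define the exposed surface area carefully: count each facet piece of $\partial A$ not in $\partial[0,1]^n$ once, including internal "cracks" only if they are genuinely exposed (we may instead replace $A$ by the closure of its interior, which only decreases $S$). We must also check that the $\ell_\infty$-neighborhood of a finite union of boxes, intersected with the cube, has volume $\alpha+\delta S(A)+O(\delta^2)$. I would prove this by refining $A$ into a grid of congruent boxes, or by decomposing $[0,1]^n$ into the grid generated by all box coordinates. Within each grid cell not in $A$, the added region is the union of slabs adjacent to its faces that are shared with $A$. This region has measure at most $\delta$ times the total area of those shared faces plus $O(\delta^2)$, and summing over cells gives the bound. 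Facets lying on $\partial[0,1]^n$ contribute nothing, because the neighborhood is intersected with the cube.
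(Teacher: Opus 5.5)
Your argument is correct. It shares the paper's overall strategy: let $\delta\to0$ in \Cref{mainCube} and extract a Minkowski-content inequality. Both halves, however, are carried out differently.

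\textbf{The paper's route.} In \Cref{minkowskiContent} the paper never expands the formula. It passes to a subsequence $\epsilon_m\to0$ along which one fixed $t$ and one fixed type (cuboid or anti-cuboid) attain the minimum. It then notes that the balanced optimizer $[0,\alpha^{1/t}]^t\times[0,1]^{n-t}$ (or its complement) does not depend on $\epsilon$, which gives $M_\infty(X)\le M_\infty(A)$ for \emph{every} measurable $A$. Finally, \Cref{rectilinear}, a box-by-box comparison of $\ell_\infty$- and $\ell_p$-neighborhoods, upgrades this to $M_p(X)\le M_p(A)$ for all $p\ge1$ when $A$ is rectilinear. Exposed surface area is formalized as this Minkowski content, and the paper never relates it to boundary measure.

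\textbf{Your route.} You take the first-order expansion of the explicit minimum, which identifies the optimal value $m=\min_t\min\{t\alpha^{(t-1)/t},t(1-\alpha)^{(t-1)/t}\}$ directly. You replace the $M_p=M_\infty$ lemma with a direct grid estimate $\lebesgue^n(N_\delta[A]\cap[0,1]^n)\le\alpha+\delta\,\mathcal H^{n-1}(\partial A\setminus\partial[0,1]^n)+O_A(\delta^2)$, and you compute $S(X)$ geometrically.

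\textbf{What each buys.} Your route proves the statement literally in terms of $(n-1)$-dimensional boundary measure, which the paper leaves implicit. Because you only need an upper bound for $A$, you also never need the Minkowski limit of $A$ to exist. The paper's route gives the $\ell_\infty$ inequality for arbitrary measurable sets and all $\ell_p$ versions at once.

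\textbf{One point to tighten.} Your first reduction, ``modifying $A$ on a null set does not change $S(A)$'', is false as stated, and taking the closure alone is not enough. A lower-dimensional fin of $(n-1)$-measure $L$ adds about $2\delta L$ to the neighborhood but only $L$ to $S$, which breaks your upper bound. The reduction that works is the one you mention at the end: replace $A$ by $\overline{A^\circ}$. This set has the same measure and satisfies $\partial\overline{A^\circ}\subseteq\partial A$. It is also exactly the union of the closed grid cells $Q$ with $Q^\circ\subseteq A$, so your cell-by-cell count applies verbatim.
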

We will make precise exactly what we mean by ``exposed surface area'' in \Cref{corollaries}. Note also that the application of a continuous isoperimetric result to graphs is not new. In \cite{bollobas_edge} the above theorem was used to deduce an edge-isoperimetric inequality in the finite grid graph, and our deductions of \Cref{mainCycles} and \Cref{mainPaths} from \Cref{mainTorus} and \Cref{mainCube} are essentially identical---indeed, our proofs of \Cref{mainCube} and \Cref{mainTorus} heavily rely on the combination of discrete and continuous ideas from \cite{bollobas_edge}. We do not define the edge-isoperimetric problem here, but note that continuous approximations have been similarly used for (vertex) isoperimetric problems in graphs, specifically the \emph{Cartesian} product of complete graphs by Harper \cite{harper1999l0}.

\section{The isoperimetric problem}

\begin{defn}
    A metric measure space is a tuple $(\Omega,\Sigma,\mu,d)$ where $(\Omega,d)$ is a metric space and $(\Omega,\Sigma,\mu)$ is a measure space where the $\sigma$-algebra $\Sigma$ contains all open subsets of $(\Omega,d)$.
\end{defn}

For a metric space $(\Omega,d)$ and a subset $A\subseteq\Omega$, define
\[
    N_\Omega[A]\eqdef\{\omega\in\Omega: d(\omega,a)<1\text{ for some }a\in A\}.
\]
Observe that $N_\Omega[A]$ is always an open set for any subset $A$ since it is the union of open balls of radius $1$.
It additionally has the property that
\begin{equation}\label{closure}
    N_\Omega[\overline{A}]=N_\Omega[A]
\end{equation}
where $\overline{A}$ denotes the closure of $A$.
\medskip

\begin{defn}[Isoperimetric function and iso-tightness]
    Let $\Omega=(\Omega,\Sigma,\mu,d)$ be a metric measure space.
    The \emph{isoperimetric function} of $\Omega$ is the function $\iso_\Omega\colon[0,\mu(\Omega)]\to[0,\mu(\Omega)]$ defined by
    \[
        \iso_\Omega(\alpha)\eqdef\inf\bigl\{\mu\bigl(N_\Omega[A]\bigr):\mu(A)\geq\alpha\bigr\}.
    \]

    We say that $\Omega$ is \emph{iso-tight} if $\iso_\Omega(\alpha)$ is achieved for every $\alpha\in[0,\mu(\Omega)]$.
    If $\Omega$ is iso-tight, then we define
    \[
        \Iso_\Omega(\alpha)\eqdef\bigl\{A\subseteq\Omega:\mu(A)\geq\alpha\text{ and } \mu\bigl(N_\Omega[A]\bigr)=\iso_\Omega(\alpha)\bigr\}.
    \]
\end{defn}
We note that $\iso_\Omega$ is (weakly) increasing and that $\iso_\Omega(\alpha)\geq\alpha$.
\medskip

Recall that a measure space $(\Omega,\Sigma,\mu)$ is said to be \emph{complete} if whenever $N\in\Sigma$ has $\mu(N)=0$ and $S\subseteq N$, then also $S\in\Sigma$ and $\mu(S)=0$.
Lebesgue's extension theorem implies that for any measure space $\Omega=(\Omega,\Sigma,\mu)$, there is a (not necessarily unique) complete measure space $\overline{\Omega}=(\Omega,\overline{\Sigma},\overline{\mu})$ with $\overline{\Sigma}\supseteq\Sigma$ and $\overline{\mu}(S)=\mu(S)$ for each $S\in\Sigma$.
It is not difficult to prove that if $\overline\Omega$ is any completion of $\Omega$, then $\iso_{\overline\Omega}=\iso_\Omega$.
The reasoning behind this is that $S\in\overline\Sigma$ if and only if there are $A,Z\in\Sigma$ for which $A\subseteq S\subseteq A\cup Z$ and $\mu(Z)=0$.

In this paper, we will be primarily concerned with products of metric measure spaces.
The above paragraph justifies that we need not distinguish between the true product and the completion thereof.

\section{Preliminaries and strong products of spaces}

A \emph{pre-order} on a set $\Omega$ is a relation $\preceq$ which is both reflexive and transitive.
Pre-orders are very similar to partial-orders, except they do not require anti-symmetry; that is, we can have both $x\preceq y$ and $y\preceq x$, yet $x\neq y$.
We say that $(\Omega,\preceq)$ is a \emph{chain} if for every $x,y\in\Omega$, either $x\preceq y$ or $y\preceq x$ (or both).
\medskip

Recall that a measure space $(\Omega,\Sigma,\mu)$ is \emph{$\sigma$-finite} if we can write $\Omega$ as the countable union of sets of finite measure.

Recall that a metric space is \emph{separable} if it contains a countable, dense subset.
Additionally, a metric space is \emph{complete} if every Cauchy sequence converges.
A metric space which is both separable and complete will be called \emph{Polish}.

At a few places throughout the paper, we will need to make use of the following ``$\tau$-additivity'' property of $\sigma$-finite, Polish metric measure spaces.

\begin{theorem}\label[theorem]{tauAdditive}
    Let $\Omega=(\Omega,\Sigma,\mu,d)$ be a $\sigma$-finite, Polish metric measure space.
    If $\mathcal A$ is a family of open subsets of $\Omega$ and $(\mathcal A,\subseteq)$ is a chain, then
    \[
        \mu\bigl(\bigcup\mathcal A\bigr)=\sup_{A\in\mathcal A}\mu(A).
    \]
\end{theorem}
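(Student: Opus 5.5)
The plan is to reduce the chain $\mathcal A$ to a countable increasing subchain with the same union, and then apply continuity of measure from below. Let $U=\bigcup\mathcal A$, which is open and hence in $\Sigma$. The inequality $\mu(U)\geq\sup_{A\in\mathcal A}\mu(A)$ is immediate from monotonicity, so only the reverse inequality needs work.

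The first step uses separability. Since $\Omega$ is a separable metric space, it is second countable; fix a countable base $\{B_j\}_{j\in\mathbb N}$ of open balls. Every open set is a union of base elements, so each $A\in\mathcal A$ satisfies $A=\bigcup\{B_j:B_j\subseteq A\}$. Hence $U=\bigcup\{B_j: B_j\subseteq A\text{ for some }A\in\mathcal A\}$. For each index $j$ in this countable set $J$, choose one $A_j\in\mathcal A$ with $B_j\subseteq A_j$. Then $U=\bigcup_{j\in J}A_j$: the inclusion $\supseteq$ holds because each $A_j\in\mathcal A$, and $\subseteq$ holds because every point of $U$ lies in some $B_j$ with $j\in J$.

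The second step uses the chain property. Enumerate $J$ as $j_1,j_2,\dots$ (if $J$ is finite, the union is the largest of finitely many comparable sets and we are done). Set $C_m$ to be the $\subseteq$-largest among $A_{j_1},\dots,A_{j_m}$; this exists since $(\mathcal A,\subseteq)$ is a chain and any finite chain has a maximum. Then $C_1\subseteq C_2\subseteq\cdots$, each $C_m\in\mathcal A$, and $\bigcup_m C_m=U$. Continuity of measure from below gives
\[
\mu(U)=\lim_{m\to\infty}\mu(C_m)\leq\sup_{A\in\mathcal A}\mu(A).
\]
This holds even when the values are infinite, so $\sigma$-finiteness and completeness are not actually needed here.

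There is no real obstacle. The only point needing care is the passage from separability to a countable base, via balls of rational radius around a countable dense set, and noting that the chosen $A_j$ are genuinely members of $\mathcal A$, so that the supremum bound applies to them.
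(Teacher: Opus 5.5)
Your proof is correct, but it takes a different route from the paper's.

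\textbf{The paper's route.} The paper invokes Ulam's theorem to get inner regularity by compact sets, $\mu(S)=\sup\{\mu(K):K\subseteq S,\ K\text{ compact}\}$. It then observes that any compact $K\subseteq\bigcup\mathcal A$ is covered by finitely many members of $\mathcal A$. By the chain property, those finitely many members reduce to a single member, so $\mu(K)\leq\sup_{A\in\mathcal A}\mu(A)$. Both hypotheses are used here. Metric completeness is needed because Ulam's theorem is stated for complete separable spaces. $\sigma$-finiteness is needed to pass from the probability-measure version to the general one.

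\textbf{Your route.} You use second countability, which for metric spaces is equivalent to separability and to the hereditary Lindel\"of property. This lets you replace $\mathcal A$ by a countable subfamily with the same union. The chain property then makes that subfamily increasing, and continuity from below finishes the argument.

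\textbf{Comparison.} Your argument is more elementary: it uses no tightness theorem and no compactness. It is also more general, as you point out. It needs only that $(\Omega,d)$ is separable and that open sets are measurable, so it holds for arbitrary measures, even non-$\sigma$-finite ones, on separable metric spaces. The compact-set argument is the one that extends to non-metrizable, non-second-countable spaces carrying Radon measures. That extra reach is never used in this paper, where every space is a finite strong product of intervals and circles.
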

\begin{proof}
    It is obvious that $\mu\bigl(\bigcup\mathcal A\bigr)\geq\sup_{A\in\mathcal A}\mu(A)$ and so we need only prove the reverse inequality.

    Ulam's theorem {\cite[Theorem 7.1.4]{dudley_book}} tells us that if $\Omega$ is a Polish metric \emph{probability} space, then for every measurable $S\in\Sigma$ and every $\epsilon>0$, there exists a compact $K\subseteq S$ with $\mu(S\setminus K)<\epsilon$.
    This immediately implies that if $\Omega$ is any $\sigma$-finite, Polish metric measure space, then
    \[
        \mu(S)=\sup\bigl\{\mu(K):K\subseteq S,\ K\text{ compact}\bigr\},\qquad\text{for every }S\in\Sigma.
    \]

    Set $S=\bigcup\mathcal A$ and consider any compact $K\subseteq S$.
    Of course, $\mathcal A$ is an open cover of $K$ and so there exists some finite sub-cover.
    Since $(\mathcal A,\subseteq)$ is a chain, this implies that $K\subseteq A$ for some $A\in\mathcal A$.
    In particular, $\mu(K)\leq\sup_{A\in\mathcal A}\mu(A)$, which concludes the proof.
\end{proof}

Fix measure spaces $\Omega_i=(\Omega_i,\Sigma_i,\mu_i)$ for $i\in\{1,2\}$.
Define $\Sigma_1\boxtimes\Sigma_2$ to be the smallest $\sigma$-algebra on $\Omega_1\times\Omega_2$ which contains the Cartesian products $\bigl\{S_1\times S_2:(S_1,S_2)\in\Sigma_1\times\Sigma_2\bigr\}$.
If each $\Omega_i$ is $\sigma$-finite, then it is well-known that there is a unique measure $\mu$ on $\Sigma_1\boxtimes\Sigma_2$ which satisfies
\[
    \mu(S_1\times S_2)=\mu_1(S_1)\cdot\mu_2(S_2)\qquad\text{for all }S_1\in\Sigma_1,\ S_2\in\Sigma_2.
\]
Call this unique measure $\mu=\mu_1\boxtimes\mu_2$.
\medskip

Consider a set $A\subseteq \Omega_1\times\Omega_2$.
For each $\omega_1\in\Omega_1$ and each $\omega_2\in\Omega_2$, define the slices
\begin{align*}
    \slicex{A}{\omega_2} &\eqdef\{x\in\Omega_1:(x,\omega_2)\in A\}\\
    \slicey{A}{\omega_1} &\eqdef\{y\in\Omega_2:(\omega_1,y)\in A\}.
\end{align*}
Observe that $x\in \slicex Ay\iff y\in \slicey Ax$.

\begin{theorem}[Fubini's Theorem]\label[theorem]{fubini}
    For $i\in\{1,2\}$, let $\Omega_i=(\Omega_i,\Sigma_i,\mu_i)$ be a $\sigma$-finite measure space.
    For every $A\in\Sigma_1\boxtimes\Sigma_2$,
    \begin{itemize}
        \item $\slicex A{\omega_2}\in\Sigma_1$ for every $\omega_2\in\Omega_2$ and
            \[
                (\mu_1\boxtimes\mu_2)(A)=\int_{\omega_2\in\Omega_2}\mu_1\bigl(\slicex{A}{\omega_2}\bigr)\ d\mu_2(\omega_2).
            \]
        \item $\slicey A{\omega_1}\in\Sigma_2$ for every $\omega_1\in\Omega_1$ and
            \[
                (\mu_1\boxtimes\mu_2)(A)=\int_{\omega_1\in\Omega_1}\mu_2\bigl(\slicey{A}{\omega_1}\bigr)\ d\mu_1(\omega_1).
            \]
    \end{itemize}
\end{theorem}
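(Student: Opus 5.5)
The statement just given is Fubini's Theorem (really Tonelli's theorem for nonnegative functions) applied to indicator functions. I would prove it by the standard monotone class route rather than invoking it as a black box.

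\textbf{Step 1: reduce to finite measure.} Since $\mu_1$ and $\mu_2$ are $\sigma$-finite, write $\Omega_1=\bigcup_j E_j$ and $\Omega_2=\bigcup_k F_k$ with increasing sets of finite measure. For $A\in\Sigma_1\boxtimes\Sigma_2$, apply the finite case to $A\cap(E_j\times F_k)$. Then pass to the limit using monotone convergence, both for the integrals and for $\mu_1\boxtimes\mu_2$, which is continuous from below. So it suffices to treat $\mu_1(\Omega_1),\mu_2(\Omega_2)<\infty$. By symmetry, only the first bullet needs proof.

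\textbf{Step 2: a good-sets class.} Let $\mathcal G$ be the family of sets $A\in\Sigma_1\boxtimes\Sigma_2$ satisfying three conditions:
\begin{itemize}
\item every slice $\slicex A{\omega_2}$ lies in $\Sigma_1$;
\item the map $\omega_2\mapsto\mu_1(\slicex A{\omega_2})$ is $\Sigma_2$-measurable;
\item the integral formula holds.
\end{itemize}
Measurability of slices is easy for all of $\Sigma_1\boxtimes\Sigma_2$: the sets whose every slice is measurable form a $\sigma$-algebra containing the rectangles, since slicing commutes with complements and countable unions.

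For a rectangle $S_1\times S_2$, the slice at $\omega_2$ is $S_1$ if $\omega_2\in S_2$ and empty otherwise. Hence the function is $\mu_1(S_1)\mathbf 1_{S_2}$, and the formula reduces to the defining property $\mu(S_1\times S_2)=\mu_1(S_1)\mu_2(S_2)$. Finite disjoint unions of rectangles form an algebra, and $\mathcal G$ contains it by additivity.

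\textbf{Step 3: monotone class.} $\mathcal G$ is closed under increasing unions by monotone convergence together with continuity of measure. It is closed under decreasing intersections by dominated convergence; this is where finiteness is used, with $\mu_1(\Omega_1)$ serving as the dominating constant. By the monotone class theorem, $\mathcal G$ contains the $\sigma$-algebra generated by the rectangle algebra, namely $\Sigma_1\boxtimes\Sigma_2$.

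The main (mild) obstacle is Step 3's decreasing limits. Bounding the slice measures there requires the finite-measure reduction, and one must check in Step 1 that measurability of $\omega_2\mapsto\mu_1(\slicex A{\omega_2})$ survives the limit as a pointwise supremum. Uniqueness of $\mu_1\boxtimes\mu_2$, stated before the theorem, is what identifies the integral with the product measure. The map $A\mapsto\int\mu_1(\slicex A{\omega_2})\,d\mu_2$ is a measure agreeing with $\mu_1\boxtimes\mu_2$ on rectangles, which gives an alternative finish to Steps 2 and 3.
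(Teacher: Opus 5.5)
Your proof is correct; it is the standard monotone-class proof of Tonelli's theorem for indicator functions. The paper gives no proof of this statement. It quotes it as the classical Fubini--Tonelli theorem, just as it quotes existence and uniqueness of $\mu_1\boxtimes\mu_2$ for $\sigma$-finite factors. So the only difference is that you make the result self-contained where the paper relies on citation.

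Two small points to tighten if you write it out.

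\begin{enumerate}
\item In Step 1, ``apply the finite case to $A\cap(E_j\times F_k)$'' is cleanest if you apply Steps 2--3 to the finite measures $\mu_1(\cdot\cap E_j)$ and $\mu_2(\cdot\cap F_k)$ on the original $\sigma$-algebras. Let $\nu(C)=(\mu_1\boxtimes\mu_2)\bigl(C\cap(E_j\times F_k)\bigr)$ play the role of the product measure. Steps 2--3 use only that the measure is finite and multiplicative on rectangles, so this needs no separate appeal to uniqueness. Taking the diagonal limit $j=k\to\infty$ with monotone convergence then finishes.
\item Your ``alternative finish'' does not really bypass Step 3. Before $A\mapsto\int\mu_1(A\{\cdot,\omega_2\})\,d\mu_2(\omega_2)$ is even defined on all of $\Sigma_1\boxtimes\Sigma_2$, you need measurability of $\omega_2\mapsto\mu_1(A\{\cdot,\omega_2\})$. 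That itself requires a monotone-class (or $\pi$--$\lambda$) argument in the finite case. Uniqueness only replaces the final identification of the two measures.
\end{enumerate}
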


Given metric spaces $\Omega_i=(\Omega_i,d_i)$ for $i\in\{1,2\}$, we define a new metric space $\Omega_1\boxtimes\Omega_2$, which has ground-set $\Omega_1\times\Omega_2$ and metric $d_1\boxtimes d_2$ defined by
\[
    (d_1\boxtimes d_2)\bigl((x_1,x_2),(y_1,y_2)\bigr)=\max\bigl\{ d_1(x_1,y_1),\ d_2(x_2,y_2)\bigr\}.
\]
The metric $d_1\boxtimes d_2$ is analogous to the $\ell_\infty$ metric.
\medskip

\begin{defn}[Strong product of spaces]
For $\sigma$-finite metric measure spaces $\Omega_i=(\Omega_i,\Sigma_i,\mu_i,d_i)$ for $i\in\{1,2\}$, we define the \emph{strong product} to be
\[
    \Omega_1\boxtimes\Omega_2\eqdef\bigl(\Omega_1\times\Omega_2,\ \Sigma_1\boxtimes\Sigma_2,\ \mu_1\boxtimes\mu_2,\ d_1\boxtimes d_2\bigr).
\]
\end{defn}
It is easy to verify (see, for instance, {\cite[Proposition 3.3]{kechris_book}}) that $\Omega_1\boxtimes\Omega_2$ is Polish if both $\Omega_1$ and $\Omega_2$ are Polish.

\section{Compressions}

We open this section with the key property of $\boxtimes$ that makes its isoperimetric function behave well.

\begin{prop}\label[prop]{fiberNeighborhood}
    Suppose that $\Omega_i=(\Omega_i,d_i)$ are metric spaces for $i\in\{1,2\}$.
    For any $A\subseteq\Omega_1\times\Omega_2$ and any $\omega_1\in\Omega_1$,
    \[
        \slicey{N_{\Omega_1\boxtimes\Omega_2}[A]}{\omega_1}=\bigcup_{x\in N_{\Omega_1}[\omega_1]}N_{\Omega_2}\bigl[\slicey Ax\bigr].
    \]
\end{prop}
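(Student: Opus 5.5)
This is a direct double inclusion obtained by unwinding the definitions. The one fact needed about the product metric is that
\[
(d_1\boxtimes d_2)\bigl((\omega_1,y),(x,z)\bigr)<1 \iff d_1(\omega_1,x)<1 \text{ and } d_2(y,z)<1 .
\]
This holds because a maximum of two numbers is less than $1$ exactly when both are. The slice description then follows by translating membership statements back and forth through this equivalence, together with the observation $x\in\slicex Ay\iff y\in\slicey Ax$.

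For the inclusion ``$\subseteq$'', let $y\in\slicey{N_{\Omega_1\boxtimes\Omega_2}[A]}{\omega_1}$. Then $(\omega_1,y)\in N_{\Omega_1\boxtimes\Omega_2}[A]$, so there is some $(x,z)\in A$ with $(d_1\boxtimes d_2)\bigl((\omega_1,y),(x,z)\bigr)<1$. By the equivalence, $d_1(\omega_1,x)<1$ and $d_2(y,z)<1$.
\begin{itemize}
\item Since $d_1(x,\omega_1)<1$, we have $x\in N_{\Omega_1}[\omega_1]$, where $N_{\Omega_1}[\omega_1]$ abbreviates $N_{\Omega_1}[\{\omega_1\}]$.
\item Since $(x,z)\in A$, we have $z\in\slicey Ax$.
\item Since $d_2(y,z)<1$, we get $y\in N_{\Omega_2}\bigl[\slicey Ax\bigr]$.
\end{itemize}
Hence $y$ lies in the union on the right-hand side.

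For the inclusion ``$\supseteq$'', reverse each step. Suppose $y\in N_{\Omega_2}\bigl[\slicey Ax\bigr]$ for some $x$ with $d_1(\omega_1,x)<1$. Then there is $z\in\slicey Ax$, so $(x,z)\in A$, with $d_2(y,z)<1$. Applying the equivalence again gives $(d_1\boxtimes d_2)\bigl((\omega_1,y),(x,z)\bigr)<1$. Therefore $(\omega_1,y)\in N_{\Omega_1\boxtimes\Omega_2}[A]$, that is, $y\in\slicey{N_{\Omega_1\boxtimes\Omega_2}[A]}{\omega_1}$.

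There is no real obstacle here. The only care needed is notational: read $N_{\Omega_1}[\omega_1]$ as the neighborhood of the singleton $\{\omega_1\}$, and use the symmetry of $d_1$. No measurability issues arise, since the statement is purely set-theoretic and involves no measures.
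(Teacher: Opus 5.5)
Your proof is correct and is the same definitional unwinding the paper uses. The paper compresses your two inclusions into a single chain of equivalences, leaving implicit the fact you make explicit: that $\max\{d_1(\omega_1,x),d_2(y,z)\}<1$ exactly when both distances are below $1$.
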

\begin{proof}
    By definition, $y\in\slicey{N_{\Omega_1\boxtimes\Omega_2}[A]}{\omega_1}$ if and only if $(\omega_1,y)\in N_{\Omega_1\boxtimes\Omega_2}[A]$.
    The latter happens if and only if there is some $x\in N_{\Omega_1}[\omega_1]$ for which $y\in N_{\Omega_2}[\slicey Ax]$, which concludes the proof.
\end{proof}

\begin{defn}[Iso-nesting]\label[defn]{defn:isoNesting}
    Let $\Omega=(\Omega,\Sigma,\mu,d)$ be a metric measure space.
    A function $F\colon\Sigma\to\Sigma$ is called an \emph{iso-nesting} of $\Omega$ if the following properties hold for every $A,B\in\Sigma$:
    \begin{itemize}
        \item $\mu\bigl(F(A)\bigr)\geq\mu(A)$, and
        \item $\mu\bigl(N\bigl[F(A)\bigr]\bigr)\leq\mu\bigl(N[A]\bigr)$, and
        \item $\mu(A)\leq\mu(B)\implies F(A)\subseteq F(B)$.
    \end{itemize}

\end{defn}
Note that in particular, the third condition implies that $\{F(A): A \in \Sigma\}$ forms a chain, which we interpret as the ``optimal'' sets with the same measure as $A$.
\begin{defn}[Compression]\label[defn]{defn:compression}
    For a function $F\colon 2^{\Omega_2}\to 2^{\Omega_2}$, the compression operator $C_F\colon 2^{\Omega_1\times\Omega_2}\to 2^{\Omega_1\times\Omega_2}$ is defined by
    \[
        C_F(A)\eqdef\bigcup_{\omega_1\in\Omega_1}\{\omega_1\}\times F\bigl(A\{\omega_1,\cdot\}\bigr)
    \]
\end{defn}

Observe that for any $\omega_1\in\Omega_1$, we have
\begin{equation}\label{eqn:compressSlice}
    C_F(A)\{\omega_1,\cdot\}=F\bigl(A\{\omega_1,\cdot\}\bigr).
\end{equation}

\begin{theorem}\label[theorem]{compressionsAreBetter}
    For $i\in\{1,2\}$, let $\Omega_i=(\Omega_i,\Sigma_i,\mu_i,d_i)$ be a $\sigma$-finite, Polish metric measure space.
    If $F\colon\Sigma_2\to\Sigma_2$ is an iso-nesting of $\Omega_2$, then for any $A\in\Sigma_1\boxtimes\Sigma_2$,
    \begin{align*}
        (\mu_1\boxtimes\mu_2)\bigl(C_F(A)\bigr) & \geq (\mu_1\boxtimes\mu_2)(A),\qquad\text{and}\\
        (\mu_1\boxtimes\mu_2)\bigl(N_{\Omega_1\boxtimes \Omega_2}\bigl[C_F(A)\bigr]\bigr) &\leq (\mu_1\boxtimes\mu_2)\bigl(N_{\Omega_1\boxtimes\Omega_2}[A]\bigr).
    \end{align*}
\end{theorem}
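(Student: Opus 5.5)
The plan is to work slice by slice, using Fubini's theorem (\Cref{fubini}) in the $\omega_1$-variable for both measures and comparing the integrands pointwise. Write $\mu=\mu_1\boxtimes\mu_2$.

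For the first inequality, \eqref{eqn:compressSlice} gives $C_F(A)\{\omega_1,\cdot\}=F(\slicey A{\omega_1})$. Since $F$ is an iso-nesting, $\mu_2(F(\slicey A{\omega_1}))\geq\mu_2(\slicey A{\omega_1})$ for every $\omega_1$, and integrating gives the claim. Before integrating, we must check that $C_F(A)\in\Sigma_1\boxtimes\Sigma_2$. We use the nesting property: the slices of $C_F(A)$ form a chain that depends only on $m(\omega_1)\eqdef\mu_2(\slicey A{\omega_1})$, which is a measurable function of $\omega_1$ by Fubini. If $F(B)$ depends monotonically on $\mu_2(B)$, then $C_F(A)$ is essentially $\{(\omega_1,y):y\in F_{m(\omega_1)}\}$ for a nested family $F_s$. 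This set can be approximated by countable unions of products $\{m\geq s\}\times F_s$ over rational $s$, up to controlling the values where the chain jumps. This measurability issue is technical but routine, and I would handle it in this way, or else note that we may pass to the completion as discussed earlier.

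For the neighborhood inequality, we again work slice by slice. By \Cref{fiberNeighborhood},
\[
\slicey{N[C_F(A)]}{\omega_1}=\bigcup_{x\in N_{\Omega_1}[\omega_1]}N_{\Omega_2}\bigl[F(\slicey Ax)\bigr].
\]
Because $\mu_2(B)\leq\mu_2(B')$ implies $F(B)\subseteq F(B')$, and $N$ is monotone, the family $\{N_{\Omega_2}[F(\slicey Ax)]:x\in N_{\Omega_1}[\omega_1]\}$ is a chain of open sets. By \Cref{tauAdditive}, the measure of this union therefore equals
\[
\sup_{x\in N_{\Omega_1}[\omega_1]}\mu_2\bigl(N[F(\slicey Ax)]\bigr)\leq\sup_{x}\mu_2\bigl(N[\slicey Ax]\bigr)\leq\mu_2\Bigl(\bigcup_{x\in N_{\Omega_1}[\omega_1]} N[\slicey Ax]\Bigr)=\mu_2\bigl(\slicey{N[A]}{\omega_1}\bigr).
\]
Here the first inequality is the second iso-nesting property, and the last equality is \Cref{fiberNeighborhood} applied to $A$. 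Integrating over $\omega_1$ with \Cref{fubini} gives the result. Note that $N[\cdot]$ is open, hence measurable in the product, so Fubini applies on both sides.

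The main obstacle is this measure-theoretic bookkeeping: applying \Cref{tauAdditive} needs the chain property, which is exactly where nesting enters, and there is also the measurability of $C_F(A)$. The set-theoretic comparison of the slices is straightforward. If the measurability of $C_F(A)$ proves awkward, I would phrase the first inequality via the inner measure, or observe that the neighborhood bound only needs $N[C_F(A)]$, which is open and hence automatically measurable.
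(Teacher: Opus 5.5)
Your proposal is correct and follows the paper's proof essentially step for step. The measure bound comes from Fubini, \eqref{eqn:compressSlice} and the first iso-nesting property. The neighborhood bound comes from \Cref{fiberNeighborhood}, the chain property of $F(\Sigma_2)$, \Cref{tauAdditive} and the second iso-nesting property, applied slice by slice and then integrated.

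Your extra worry about the measurability of $C_F(A)$ is something the paper leaves implicit. $C_F(A)$ can genuinely fail to lie in $\Sigma_1\boxtimes\Sigma_2$ itself and only lies in its completion. So your fallback of passing to the completion, which the paper justifies in its discussion of completions, is the right way to settle it.
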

In other words, compressions along an iso-nesting make sets larger and neighborhoods smaller.

\begin{proof}
    Since $\mu_2\bigl(F(Y)\bigr)\geq\mu_2(Y)$ for every $Y\in\Sigma_2$, we can apply Fubini's theorem and \cref{eqn:compressSlice} to see that
    \begin{align*}
    (\mu_1\boxtimes\mu_2)\bigl(C_F(A)\bigr) &=\int_{\omega_1\in\Omega_1}\mu_2\bigl(\slicey{C_F(A)}{\omega_1}\bigr)\ d\mu_1(\omega_1) =\int_{\omega_1\in\Omega_1}\mu_2\bigl(F\bigl(\slicey{A}{\omega_1}\bigr)\bigr)\ d\mu_1(\omega_1)\\
                                          &\geq\int_{\omega_1\in\Omega_1}\mu_2\bigl(\slicey{A}{\omega_1}\bigr)\ d\mu_1(\omega_1) =(\mu_1\boxtimes\mu_2)(A).
    \end{align*}

    We work similarly for the second part.
    First, consider any $\mathcal Y\subseteq\Sigma_2$.
    By definition, $F(\Sigma_2)$ is a chain and so $\{F(Y):Y\in\mathcal Y\}$ is also a chain.
    In particular, the family $\bigl\{N_{\Omega_2}[F(Y)]:Y\in\mathcal Y\bigr\}$ is a chain of open sets and so \Cref{tauAdditive} tells us that
    \[
        \mu_2\biggl(\bigcup_{Y\in\mathcal Y}N_{\Omega_2}[F(Y)]\biggr)=\sup_{Y\in\mathcal Y}\mu_2\bigl(N_{\Omega_2}[F(Y)]\bigr).
    \]
    In particular, for any $\omega_1\in\Omega_1$, we can use \Cref{fiberNeighborhood} and \cref{eqn:compressSlice} to bound
    \begin{align*}
        \mu_2\bigl(\slicey{N_{\Omega_1\boxtimes\Omega_2}[C_F(A)]}{\omega_1}\bigr) &= \mu_2\biggl(\bigcup_{x\in N_{\Omega_1}[\omega_1]} N_{\Omega_2}\bigl[\slicey{C_F(A)}{x}\bigr]\biggr) = \mu_2\biggl(\bigcup_{x\in N_{\Omega_1}[\omega_1]} N_{\Omega_2}\bigl[F\bigl(\slicey{A}{x}\bigr)\bigr]\biggr)\\
                                                                                &= \sup_{x\in N_{\Omega_1}[\omega_1]}\mu_2\bigl(N_{\Omega_2}\bigl[F\bigl(\slicey Ax\bigr)\bigr]\bigr) \leq \sup_{x\in N_{\Omega_1}[\omega_1]}\mu_2\bigl(N_{\Omega_2}\bigl[\slicey Ax\bigr]\bigr)\\
                                                                                &\leq \mu_2\biggl(\bigcup_{x\in N_{\Omega_1}[\omega_1]}N_{\Omega_2}\bigl[\slicey Ax\bigr]\biggr)=\mu_2\bigl(\slicey{N_{\Omega_1\boxtimes\Omega_2}[A]}{\omega_1}\bigr).
    \end{align*}
    Hence, applying Fubini's theorem yields
    \begin{align*}
        (\mu_1\boxtimes\mu_2)\bigl(N_{\Omega_1\boxtimes\Omega_2}\bigl[C_F(A)\bigr]\bigr) &= \int_{\omega_1\in\Omega_1}\mu_2\bigl(\slicey{N_{\Omega_1\boxtimes\Omega_2}[C_F(A)]}{\omega_1}\bigr)\ d\mu_1(\omega_1)\\
                                                                                     &\leq \int_{\omega_1\in\Omega_1}\mu_2\bigl(\slicey{N_{\Omega_1\boxtimes\Omega_2}[A]}{\omega_1}\bigr)\ d\mu_1(\omega_1)\\
                                                                                     &= (\mu_1\boxtimes\mu_2)\bigl(N_{\Omega_1\boxtimes\Omega_2}[A]\bigr).\qedhere
    \end{align*}
\end{proof}



\begin{defn}[Layered sets]
    Let $\Omega_1,\Omega_2$ be two sets and fix a family $\mathcal F\subseteq 2^{\Omega_2}$.
    A set $A\subseteq\Omega_1\times\Omega_2$ is said to be \emph{$\mathcal F$-layered} if $\slicey{A}{\omega_1}\in\mathcal F$ for every $\omega_1\in\Omega_1$.
\end{defn}

Note that if $A\in\Sigma_1\boxtimes\Sigma_2$ where $\Sigma_1,\Sigma_2$ are $\sigma$-algebras, then $A$ is $\Sigma_2$-layered due to Fubini's theorem.
Furthermore, observe that if $F\colon\Sigma_2\to\Sigma_2$ is any function, then $C_F(A)$ is $F(\Sigma_2)$-layered.
In particular, \Cref{compressionsAreBetter} says that, when computing $\iso_{\Omega_1\boxtimes\Omega_2}$ in the case that $\Omega_2$ has an iso-nesting $F$, then we need only consider subsets which are $F(\Sigma_2)$-layered.
\medskip

Next, fix a set $\Omega$ and let $\mathcal F\subseteq 2^{\Omega}$ be some family of subsets of $\Omega$.
Define the pre-order $\preceq_{\mathcal F}$ on $\Omega$ by $x\preceq_{\mathcal F}y$ if and only if
\[
    y\in S\implies x\in S\qquad\text{for all }S\in\mathcal F
\]
Note that if $y\in\Omega\setminus\bigcup\mathcal F$, then $x\preceq_{\mathcal F}y$ for every $x\in\Omega$.

\begin{prop}\label[prop]{chainYieldsChain}
    For any $\mathcal F\subseteq 2^{\Omega}$, $(\mathcal F,\subseteq)$ is a chain if and only if $(\Omega,\preceq_{\mathcal F})$ is also a chain.
\end{prop}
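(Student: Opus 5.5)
Both directions go by contraposition: from a failure of comparability on one side we build an incomparable pair on the other. Throughout I use only the definition $x\preceq_{\mathcal F}y\iff(\forall S\in\mathcal F:\ y\in S\Rightarrow x\in S)$.

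\emph{Forward direction.} Assume $(\mathcal F,\subseteq)$ is a chain, and suppose $(\Omega,\preceq_{\mathcal F})$ is not. Then there are $x,y\in\Omega$ with neither $x\preceq_{\mathcal F}y$ nor $y\preceq_{\mathcal F}x$.
\begin{itemize}
\item Failure of $x\preceq_{\mathcal F}y$ gives some $S\in\mathcal F$ with $y\in S$ and $x\notin S$.
\item Failure of $y\preceq_{\mathcal F}x$ gives some $T\in\mathcal F$ with $x\in T$ and $y\notin T$.
\end{itemize}
Since $\mathcal F$ is a chain, $S\subseteq T$ or $T\subseteq S$. If $S\subseteq T$, then $y\in S\subseteq T$, contradicting $y\notin T$. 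If $T\subseteq S$, then $x\in T\subseteq S$, contradicting $x\notin S$. Either way we have a contradiction, so $\preceq_{\mathcal F}$ is a chain.

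\emph{Reverse direction.} Assume $(\Omega,\preceq_{\mathcal F})$ is a chain, and suppose $S,T\in\mathcal F$ are incomparable. Then we can pick $y\in S\setminus T$ and $x\in T\setminus S$.
\begin{itemize}
\item The set $S$ witnesses $x\not\preceq_{\mathcal F}y$, since $y\in S$ but $x\notin S$.
\item The set $T$ witnesses $y\not\preceq_{\mathcal F}x$, since $x\in T$ but $y\notin T$.
\end{itemize}
So $x$ and $y$ are incomparable, contradicting that $\preceq_{\mathcal F}$ is a chain.

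Nothing here depends on $\preceq_{\mathcal F}$ being only a pre-order; reflexivity and transitivity are never used. The step most prone to error is keeping the direction of the implication straight: $x\preceq_{\mathcal F}y$ means that membership of $y$ forces membership of $x$, not the other way around. Once the witnesses are chosen correctly, both directions are immediate.
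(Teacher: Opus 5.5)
Your proof is correct and essentially the same as the paper's. The direction from incomparable sets to incomparable points (choosing $y\in S\setminus T$ and $x\in T\setminus S$) is identical. For the other direction, the paper argues directly from a single witness that $x\not\preceq_{\mathcal F}y$ forces $y\preceq_{\mathcal F}x$, whereas you argue by contradiction with two witnesses; this is only a difference in phrasing.
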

\begin{proof}
    Suppose first that $(\mathcal F,\subseteq)$ is not a chain.
    Then we can locate $A,B\in\mathcal F$ such that both $A\setminus B$ and $B\setminus A$ are non-empty.
    If $x\in A\setminus B$ and $y\in B\setminus A$, then neither $x\preceq_{\mathcal F}y$ nor $y\preceq_{\mathcal F}x$ and so $(\Omega,\preceq_{\mathcal F})$ is not a chain.
    \medskip

    Suppose next that $(\mathcal F,\subseteq)$ is a chain.
    Fix any $x,y\in\Omega$ and suppose that $x\not\preceq_{\mathcal F}y$.
    Then there is some $A\in\mathcal F$ for which $y\in A$ yet $x\notin A$.
    Now, fix any $S\in\mathcal F$ and suppose that $x\in S$.
    Of course, $S\not\subseteq A$ since $x\notin A$, so it must be the case that $A\subseteq S$ since $\mathcal F$ is a chain under set inclusion.
    Therefore $y\in S$ as well.
    Since $S\in\mathcal F$ was arbitrary, this implies that $y\preceq_{\mathcal F}x$.
    We conclude that $(\Omega,\preceq_{\mathcal F})$ is a chain.
\end{proof}

\begin{theorem}\label[theorem]{slices}
    Let $\Omega_i=(\Omega_i,\Sigma_i,\mu_i,d_i)$ be a $\sigma$-finite, Polish metric measure space for each $i\in\{1,2\}$.
    Fix $A\in\Sigma_1\boxtimes\Sigma_2$ and define the functions $u_A,n_A\colon\Omega_2\to[0,\mu_1(\Omega_1)]$ by
    \[
        u_A(\omega_2)\eqdef\mu_1\bigl(A\{\cdot,\omega_2\}\bigr),\qquad\text{and}\qquad n_A(\omega_2)\eqdef\mu_1\bigl(N_{\Omega_1}[A\{\cdot,\omega_2\}]\bigr).
    \]
    Fix also a family $\mathcal F\subseteq 2^{\Omega_2}$.
    The following hold:
    \begin{enumerate}
        \item $\displaystyle(\mu_1\boxtimes\mu_2)(A)=\int_{\omega_2\in\Omega_2}u_A(\omega_2)\ d\mu_2(\omega_2)$.
        \item $n_A(\omega_2)\geq \iso_{\Omega_1}\bigl(u_A(\omega_2)\bigr)$ for all $\omega_2\in\Omega_2$.
        \item If $A$ is $\mathcal F$-layered and $x\preceq_{\mathcal F} y$, then $u_A(x)\geq u_A(y)$ and $n_A(x)\geq n_A(y)$.
        \item If $(\mathcal F,\subseteq)$ is a chain and $A$ is $\mathcal F$-layered, then
            \[
                (\mu_1\boxtimes\mu_2)\bigl(N_{\Omega_1\boxtimes\Omega_2}[A]\bigr)=\int_{\omega_2\in\Omega_2}\biggl(\sup_{y\in N_{\Omega_2}[\omega_2]} n_A(y)\biggr)\ d\mu_2(\omega_2).
            \]
    \end{enumerate}
\end{theorem}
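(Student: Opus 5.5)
Parts 1 and 2 are essentially immediate, and I would dispatch them first. Part 1 is the first bullet of \Cref{fubini}, with the roles of the coordinates arranged so that we integrate over $\omega_2$; it also gives $A\{\cdot,\omega_2\}\in\Sigma_1$, so $u_A$ is defined. For part 2, set $S=A\{\cdot,\omega_2\}$. Then $\mu_1(S)=u_A(\omega_2)$, so $S$ is admissible in the infimum defining $\iso_{\Omega_1}(u_A(\omega_2))$. Hence $n_A(\omega_2)=\mu_1(N_{\Omega_1}[S])\geq\iso_{\Omega_1}(u_A(\omega_2))$. Measurability of $N_{\Omega_1}[S]$ is automatic because it is open.

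For part 3, I would first show $A\{\cdot,y\}\subseteq A\{\cdot,x\}$ whenever $x\preceq_{\mathcal F}y$. If $\omega_1\in A\{\cdot,y\}$, then $y\in A\{\omega_1,\cdot\}\in\mathcal F$, since $A$ is $\mathcal F$-layered. The definition of $\preceq_{\mathcal F}$ then gives $x\in A\{\omega_1,\cdot\}$, that is, $\omega_1\in A\{\cdot,x\}$. Monotonicity of $\mu_1$ yields $u_A(x)\geq u_A(y)$. Monotonicity of $N_{\Omega_1}[\cdot]$ under inclusion, together with monotonicity of $\mu_1$, yields $n_A(x)\geq n_A(y)$.

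Part 4 is the real content. I would apply \Cref{fiberNeighborhood} with the roles of the factors swapped, which is symmetric in the $\max$ metric. This gives
\[
\slicex{N_{\Omega_1\boxtimes\Omega_2}[A]}{\omega_2}=\bigcup_{y\in N_{\Omega_2}[\omega_2]}N_{\Omega_1}\bigl[A\{\cdot,y\}\bigr].
\]
By \Cref{fubini}, applied to the open (hence measurable) set $N_{\Omega_1\boxtimes\Omega_2}[A]$, it suffices to show that the $\mu_1$-measure of this union equals $\sup_{y\in N_{\Omega_2}[\omega_2]}n_A(y)$. By \Cref{chainYieldsChain}, $(\Omega_2,\preceq_{\mathcal F})$ is a chain. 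The containment shown in part 3 then makes the slices $\{A\{\cdot,y\}:y\in\Omega_2\}$ a chain under inclusion, and therefore their neighborhoods $\{N_{\Omega_1}[A\{\cdot,y\}]\}$ form a chain of open sets. \Cref{tauAdditive} applied in $\Omega_1$ then converts the measure of the union into the supremum of the measures, which is exactly the integrand.

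The main obstacle is this $\tau$-additivity step, since the union is uncountable. Care is needed that the chain property of the slices truly follows. If $x,y$ satisfy both $x\preceq y$ and $y\preceq x$, the two slices coincide, so nothing breaks. I expect the rest to be bookkeeping.
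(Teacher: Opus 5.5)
Your proposal is correct and follows essentially the same route as the paper. Part 3 rests on the slice containment $A\{\cdot,y\}\subseteq A\{\cdot,x\}$ for $x\preceq_{\mathcal F}y$. For part 4 you combine \Cref{chainYieldsChain} (to get a chain of open neighborhoods), \Cref{tauAdditive} (to turn the uncountable union into a supremum), and \Cref{fubini} together with the coordinate-swapped \Cref{fiberNeighborhood}, exactly as the paper does.
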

\begin{proof}
    The first item follows from Fubini's theorem and the second item follows from definition.
    We thus focus only on the third and fourth items.
    As such, suppose that $A$ is $\mathcal F$-layered.
    \medskip

    Fix any $x\preceq_{\mathcal F} y\in\Omega_2$; we claim that $\slicex Ax\supseteq\slicex Ay$.
    To see this, fix any $\omega\in\slicex Ay$; then $y\in\slicey A\omega$.
    Now, $\slicey A\omega\in\mathcal F$ since $A$ is $\mathcal F$-layered and so $x\in\slicey A\omega$ as well since $x\preceq_{\mathcal F} y$ by assumption.
    Therefore, $\omega\in\slicex Ax$ as needed.
    \medskip

    Therefore, if $x\preceq_{\mathcal F}y$, it is obvious that $u_A(x)\geq u_A(y)$ and $n_A(x)\geq n_A(y)$ since $\slicex Ax\supseteq\slicex Ay$.
    \medskip

    We finally establish the fourth item.
    To begin, \Cref{chainYieldsChain} tells us that $(\Omega_2,\preceq_{\mathcal F})$ is a chain since $\mathcal F$ is a chain.
    In particular, due to the work above, this implies that the family $\{\slicex{A}{\omega_2}:\omega_2\in\Omega_2\}$ is a chain.
    Therefore, for any $Y\subseteq\Omega_2$, $\{\slicex Ay:y\in Y\}$ is a chain and so the family $\{N_{\Omega_1}[\slicex Ay]:y\in Y\}$ is a chain of open sets.
    As such, for any non-empty $Y\subseteq\Omega_2$, \Cref{tauAdditive} tells us that
    \[
        \mu_1\biggl(\bigcup_{y\in Y}N_{\Omega_1}[\slicex Ay]\biggr)=\sup_{y\in Y}\mu_1\bigl(N_{\Omega_1}[\slicex Ay]\bigr)=\sup_{y\in Y} n_A(y).
    \]
    Therefore, Fubini's theorem along with \Cref{fiberNeighborhood} yields
    \begin{align*}
        (\mu_1\boxtimes\mu_2)\bigl(N_{\Omega_1\boxtimes\Omega_2}[A]\bigr) &= \int_{\omega_2\in\Omega_2}\mu_1\bigl(\slicex{N_{\Omega_1\boxtimes\Omega_2}[A]}{\omega_2}\bigr)\ d\mu_2(\omega_2)\\
                                                                      &= \int_{\omega_2\in\Omega_2}\mu_1\biggl(\bigcup_{y\in N_{\Omega_2}[\omega_2]} N_{\Omega_1}\bigl[\slicex Ay\bigr]\biggr)\ d\mu_2(\omega_2)\\
                                                                      &= \int_{\omega_2\in\Omega_2}\biggl(\sup_{y\in N_{\Omega_2}[\omega_2]}n_A(y)\biggr)\ d\mu_2(\omega_2).\qedhere
    \end{align*}
\end{proof}

\section{Cubes, cylinders and tori}

We next outline the two basic metric measure spaces we will be using.
\begin{enumerate}
    \item For $\delta\in(0,1]$, the interval $I(\delta)$ has ground-set $[0,1]$ with the standard Lebesgue measure $\lebesgue$ and metric $d(x,y)={1\over\delta}\abs{x-y}$.
    \item For $\delta\in(0,1]$, the circle $S(\delta)$ has ground-set $[0,1)$ with the standard Lebesgue measure $\lebesgue$ and metric $d(x,y)={1\over\delta}\min\bigl\{\abs{x-y},1-\abs{x-y}\bigr\}$.
\end{enumerate}

These two spaces are incredibly nice.
Firstly, they are both clearly metric probability spaces (and hence $\sigma$-finite) and are Polish.
Let $\Sigma$ denote the $\sigma$-algebra of Lebesgue-measurable subsets of $[0,1]$ and define the set of \emph{rooted intervals} to be
\[
    \rootedIntervals\eqdef\bigl\{[0,x):x\in[0,1]\bigr\}\cup\bigl\{[0,x]:x\in[0,1)\bigr\}\subseteq\Sigma.
\]
\begin{theorem}\label[theorem]{1d}
    Fix $X\in\bigl\{I(\delta),S(\delta)\bigr\}$.
    The function $R\colon\Sigma\to\rootedIntervals$ defined by $R(A)=\bigl[0,\lebesgue(A)\bigr)$ is an iso-nesting of $X$.
    Furthermore, $X$ is iso-tight and $R(A)\in\Iso_X\bigl(\lebesgue(A)\bigr)$ for each $A\in\Sigma$.
\end{theorem}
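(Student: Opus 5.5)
Note first that with the scaled metric, $N_X[A]$ is the Euclidean (open) $\delta$-neighborhood of $A$ inside $[0,1]$, or inside the circle for $S(\delta)$. I will verify the three iso-nesting conditions and then prove the isoperimetric bound $\lebesgue(N_X[A])\geq\lebesgue(N_X[R(A)])$.

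The first and third conditions are immediate. We have $\lebesgue(R(A))=\lebesgue(A)$. If $\lebesgue(A)\leq\lebesgue(B)$, then $[0,\lebesgue(A))\subseteq[0,\lebesgue(B))$. For the second condition, write $a=\lebesgue(A)$ and compute directly:
\begin{itemize}
\item in $I(\delta)$, $N[R(A)]=[0,\min\{a+\delta,1\}]$ up to an endpoint, so its measure is $\min\{a+\delta,1\}$ when $a>0$, and $0$ when $a=0$;
\item in $S(\delta)$, $N[R(A)]$ is the arc $(-\delta,a+\delta)$, of measure $\min\{a+2\delta,1\}$ when $a>0$.
\end{itemize}
So it suffices to show, for every measurable $A$ with $\lebesgue(A)=a>0$, that $\lebesgue(N[A])\geq\min\{a+\delta,1\}$ in the interval case and $\geq\min\{a+2\delta,1\}$ in the circle case.

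\textbf{Reduction to compact $A$.} By inner regularity (Ulam's theorem, as in the proof of \Cref{tauAdditive}), pick compact $K\subseteq A$ with $\lebesgue(K)>a-\epsilon$. Since $N[K]\subseteq N[A]$, it suffices to prove the bound for compact $K$ and then let $\epsilon\to0$, using continuity of the right-hand side in $a$.

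\textbf{Interval case.} Let $K$ be compact and nonempty, and let $m=\max K$. The point $m$ contributes $(m,\min\{m+\delta,1\}]$ to $N[K]\setminus K$. Also $K\subseteq N[K]\cap[0,m]$. These two pieces are disjoint, so
\[
\lebesgue(N[K])\geq\lebesgue(K)+\min\{\delta,1-m\}.
\]
If $1-m\geq\delta$, this gives $a+\delta$. Otherwise we use $[0,m]\cap N[K]\supseteq K$ differently: $N[K]\supseteq K\cup(m,1]$, so $\lebesgue(N[K])\geq\lebesgue(K\cap[0,m])+(1-m)$. That alone is not enough, and this is the delicate point. Instead, also use the minimum point $m'=\min K$. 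It contributes $[\max\{m'-\delta,0\},m')$, which is disjoint from $K$ and from the right-hand piece. If either end-gain is at least $\delta$ we are done. If neither is, then $m'<\delta$ and $m>1-\delta$. In that case I would take the gaps $(g_i,h_i)$ of $K$ inside $[m',m]$: each gap is covered by $N[K]$ to length at least $\min\{h_i-g_i,2\delta\}$. If some gap has length at least $\delta$, it contributes at least $\delta$ outside $K$. If every gap has length less than $\delta$, then $N[K]\supseteq(m'-\delta,m+\delta)\cap[0,1]=[0,1]$, giving measure $1$. Either way we get $\min\{a+\delta,1\}$.

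\textbf{Circle case.} The same idea applies with $2\delta$. The complement of compact $K$ is a union of open arcs (gaps). If some gap has length at least $2\delta$, its two ends each add $\delta$, giving $a+2\delta$. If some gap has length $\ell\in[\delta,2\delta)$ and there is a second gap, then the first contributes $\ell\geq\delta$ and the second at least $\min\{\ell_2,\ldots\}$. This is the main obstacle, and the clean route is a different accounting: $N[K]\setminus K=\bigcup_i(\text{gap}_i\cap N[K])$, and each gap $i$ contributes $\min\{\ell_i,2\delta\}$. Hence
\[
\lebesgue(N[K])=a+\sum_i\min\{\ell_i,2\delta\}.
\]
If all $\ell_i<2\delta$, then $N[K]$ is the whole circle, so the measure is $1$. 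Otherwise one term equals $2\delta$, giving at least $a+2\delta$.

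The same gap accounting also streamlines the interval case: the interior gaps contribute $\min\{\ell,2\delta\}$ and the two boundary gaps contribute $\min\{\ell,\delta\}$. With this, both bounds follow at once.

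\textbf{Conclusion.} These computations establish the second iso-nesting condition. For iso-tightness, given $\alpha$, the set $[0,\alpha)$ has measure $\alpha$, and its neighborhood measure equals the lower bound just proved for every set of measure at least $\alpha$ (using that the bound is monotone in $a$). Hence it attains $\iso_X(\alpha)$, so $X$ is iso-tight and $R(A)\in\Iso_X(\lebesgue(A))$. The case $\alpha=0$ is handled by the empty set.
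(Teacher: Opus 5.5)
Your proof is correct, but it uses the dual decomposition to the paper's. The paper never approximates $A$. It splits the open set $N_X[A]$ into its finitely many connected components $I_1,\dots,I_n$; on the circle it first assumes $N_X[A]\neq[0,1)$ and rotates so that no component wraps around. Every point of $A\cap I_i$ is at distance at least $\delta$ from each end of $I_i$ (except ends at $0$ or $1$ in $I(\delta)$), so $\lebesgue(I_i)\geq\sup(A\cap I_i)-\inf(A\cap I_i)+2\delta\geq\lebesgue(A\cap I_i)+2\delta$, with only $+\delta$ for the two extreme components in $I(\delta)$. Summing gives $\lebesgue(A)+2n\delta$ on the circle and $\lebesgue(A)+2(n-1)\delta$ on the interval. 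The one-component interval case is a separate computation with $\inf A$ and $\sup A$. Since only suprema and infima of $A$ enter, this handles arbitrary measurable $A$ with no regularity input.

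You instead decompose the complement into gaps, which needs the gap endpoints to lie in the set; that is why you need the compactness reduction. In return you get the exact identity $\lebesgue(N_X[K])=\lebesgue(K)+\sum_i\min\{\ell_i,2\delta\}$, with boundary gaps contributing $\min\{\ell,\delta\}$ in the interval. After that, both spaces fall to one dichotomy: either some gap contributes at least $\delta$ (at least $2\delta$ on the circle), or every gap is short and $N_X[K]$ is the whole space. No separate one-component case arises.

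Two small improvements are worth making. First, the $\epsilon$-approximation via Ulam's theorem can be dropped. Since $\overline A$ is compact, $\lebesgue(\overline A)\geq\lebesgue(A)$, and $N_X[\overline A]=N_X[A]$, the gap identity applies to $\overline A$ directly. Second, delete the abandoned first attempts: the ``delicate point'' in the interval case and the discussion of gaps of length in $[\delta,2\delta)$ on the circle. The gap-accounting argument alone proves both cases.
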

\begin{proof}
    We clearly have $\lebesgue\bigl(R(A)\bigr)=\lebesgue(A)$ for any $A\in\Sigma$; we need to prove that $\lebesgue\bigl(N_X[R(A)]\bigr)\leq\lebesgue\bigl(N_X[A]\bigr)$.

    We begin by focusing on $S(\delta)$.
    Here, $\lebesgue\bigl(N\bigl[R(A)\bigr]\bigr)=\min\{1,\lebesgue(A)+2\delta\}$.

    Suppose that $N_{S(\delta)}[A]\neq [0,1)$.
    Using the cyclic symmetry of $S(\delta)$, we can express $N_{S(\delta)}[A]=I_1\sqcup\dots\sqcup I_n$ where each $I_i$ is an interval in $[0,1)$ and $\sup I_i\leq\inf I_{i+1}$ for each $i\in[n-1]$.
    Furthermore, we may suppose that $0\notin I_1$ since $N_{S(\delta)}[A]\neq [0,1)$.
    With this consideration, we find that for each $i\in[n]$, $\inf I_i+\delta\leq\inf(A\cap I_i)\leq\sup(A\cap I_i)\leq \sup I_i-\delta$ and so
    \[
        \lebesgue\bigl(N_{S(\delta)}[A]\bigr)=\sum_{i=1}^n\lebesgue(I_i)\geq \sum_{i=1}^n\bigl(\lebesgue(A\cap I_i)+2\delta\bigr)=\lebesgue(A)+2n\delta\geq \min\{1,\lebesgue(A)+2\delta\}=\lebesgue\bigl(N_{S(\delta)}[A]\bigr).
    \]

    We now consider $I(\delta)$.
    Here, $\lebesgue\bigl(N_{I(\delta)}\bigl[R(A)\bigr]\bigr)=\min\{1,\lebesgue(A)+\delta\}$.
    Similarly to before, we can express $N_{I(\delta)}[A]=I_1\sqcup\dots\sqcup I_n$ where $I_i$ is an interval in $[0,1]$ and $\sup I_i\leq \inf_{i+1}$ for each $i\in[n-1]$.
    Now, for each $i\in\{1,\dots,n-1\}$, $\sup(A\cap I_i)\leq\sup I_i-\delta$ and for each $i\in\{2,\dots,n-1\}$, $\inf(A\cap I_i)\geq\inf I_i+\delta$.
    In particular, if $n\geq 2$, then
    \begin{align*}
        \lebesgue\bigl(N_{I(\delta)}[A]\bigr) &=\sum_{i=1}^n\lebesgue(I_i)\geq\bigl(\lebesgue(A\cap I_1)+\delta\bigr)+\sum_{i=2}^{n-1}\bigl(\lebesgue(A\cap I_i)+2\delta\bigr)+\bigl(\lebesgue(A\cap I_n)+\delta\bigr)\\
                                              &=\lebesgue(A)+2(n-1)\delta\geq\min\{1,\lebesgue(A)+\delta\}=\lebesgue\bigl(N_{I(\delta)}\bigl[R(A)\bigr]\bigr)
    \end{align*}
    We may thus suppose that $n=1$.
    Here, we have $\inf A-\inf I_1=\min\{\delta,\inf A\}$ and $\sup I_1-\sup A=\min\{\delta,1-\sup A\}$ and so
    \begin{align*}
        \lebesgue\bigl(N_{I(\delta)}[A]\bigr) &= \sup A-\inf A+\min\{\delta,1-\sup A\}+\min\{\delta,\inf A\}\\
                                              &\geq\lebesgue(A)+\min\{\delta,1-\sup A\}+\min\{\delta,\inf A\}.
    \end{align*}
    If either $\inf A\geq\delta$ or $1-\sup A\geq\delta$, then we are done.
    Otherwise, $\inf A<\delta$ and $1-\sup A<\delta$ and so
    \begin{align*}
        \lebesgue\bigl(N_{I(\delta)}[A]\bigr) &= \sup A-\inf A+\min\{\delta,1-\sup A\}+\min\{\delta,\inf A\}\\
                                              &= \sup A-\inf A+(1-\sup A)+\inf A=1,
    \end{align*}
    which concludes the proof.
\end{proof}


We can furthermore use \Cref{slices} to get a better understanding of this isoperimetric function.


\begin{theorem}\label[theorem]{cubeSlices}
    Suppose that $\Omega=(\Omega,\Sigma,\mu,d)$ is a $\sigma$-finite Polish metric measure space.
    Fix a $(\mu\boxtimes\lebesgue)$-measurable $A\subseteq\Omega\times[0,1)$ and define the functions $u_A,n_A\colon[0,1)\to[0,\mu(\Omega)]$ by
    \[
        u_A(t)\eqdef\mu\bigl(\slicex At\bigr),\qquad\text{and}\qquad n_A(t)\eqdef\mu\bigl(N_\Omega\bigl[\slicex At\bigr]\bigr).
    \]
    The following hold:
    \begin{enumerate}
        \item $\displaystyle(\mu\boxtimes\lebesgue)(A)=\int_0^1 u_A(t)\ dt$.
        \item $n_A(t)\geq\iso_\Omega\bigl(u_A(t)\bigr)$ for all $t\in[0,1)$.
        \item If $A$ is $\rootedIntervals$-layered, then both $u_A$ and $n_A$ are (weakly) decreasing.
        \item If $A$ is $\rootedIntervals$-layered, then
            \begin{align*}
                (\mu\boxtimes\lebesgue)\bigl(N_{\Omega\boxtimes I(\delta)}[A]\bigr) &= \phantom{2}\delta\cdot n_A(0)+\int_0^{1-\phantom{2}\delta}n_A(t)\ dt,\quad\text{for }0<\delta\leq 1\phantom{/2}\qquad\text{and}\\
                (\mu\boxtimes\lebesgue)\bigl(N_{\Omega\boxtimes S(\delta)}[A]\bigr) &= 2\delta\cdot n_A(0)+\int_0^{1-2\delta}n_A(t)\ dt,\quad\text{for }0<\delta\leq 1/2.
            \end{align*}
    \end{enumerate}
\end{theorem}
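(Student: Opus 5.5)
This is the specialization of \Cref{slices} to $\Omega_1=\Omega$ and $\Omega_2\in\{I(\delta),S(\delta)\}$ with $\mathcal F=\rootedIntervals$, so the plan is to read off each item from \Cref{slices}. Both $I(\delta)$ and $S(\delta)$ are Polish probability spaces, so the hypotheses of \Cref{slices} hold. Items 1 and 2 are then exactly items 1 and 2 of \Cref{slices} with $\mu_2=\lebesgue$ on $[0,1)$. For $I(\delta)$ the ground set is $[0,1]$, but the single point $1$ is null and does not affect any integral.

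For item 3, I will compute $\preceq_{\rootedIntervals}$. If $x\le y$, then every rooted interval containing $y$ also contains $x$, so $x\preceq_{\rootedIntervals}y$. Item 3 of \Cref{slices} then gives $u_A(x)\ge u_A(y)$ and $n_A(x)\ge n_A(y)$, which says both functions are weakly decreasing.

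Item 4 needs more work. $\rootedIntervals$ is a chain under inclusion, so item 4 of \Cref{slices} applies and gives
\[
(\mu\boxtimes\lebesgue)\bigl(N[A]\bigr)=\int_0^1\sup_{y\in N_{\Omega_2}[t]}n_A(y)\,dt .
\]
Because $n_A$ is decreasing, the supremum over a neighbourhood is attained by approaching its leftmost point. I will evaluate this separately for the two spaces.
\begin{itemize}
\item In $I(\delta)$, $N[t]=(t-\delta,t+\delta)\cap[0,1]$. For $t<\delta$ this set contains $0$, so the supremum is $n_A(0)$. For $t\ge\delta$ the supremum is $\lim_{s\downarrow t-\delta}n_A(s)$, the right limit at $t-\delta$.
\item In $S(\delta)$, $N[t]$ is the arc $(t-\delta,t+\delta)$ taken mod $1$. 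It contains $0$ exactly when $t<\delta$ or $t>1-\delta$, in which case the supremum is $n_A(0)$. Otherwise the supremum is the right limit of $n_A$ at $t-\delta$.
\end{itemize}

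The main obstacle is that these right limits are not literally $n_A(t-\delta)$. Since $n_A$ is monotone it has only countably many discontinuities, so $\lim_{s\downarrow u}n_A(s)=n_A(u)$ for almost every $u$, and replacing one by the other does not change the integral. I will then substitute $s=t-\delta$.
\begin{itemize}
\item For $I(\delta)$ this gives $\delta\,n_A(0)+\int_0^{1-\delta}n_A(s)\,ds$.
\item For $S(\delta)$, the region where the supremum is $n_A(0)$ has total length $\delta+\delta=2\delta$, and the middle range $t\in[\delta,1-\delta]$ becomes $s\in[0,1-2\delta]$. This gives $2\delta\,n_A(0)+\int_0^{1-2\delta}n_A(s)\,ds$.
\end{itemize}
The condition $\delta\le1/2$ is what makes the middle range nonempty, or degenerate when $\delta=1/2$.
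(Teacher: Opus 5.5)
Your proposal is correct and follows essentially the same route as the paper. Items 1--3 are read off from \Cref{slices} using $x\le y\Rightarrow x\preceq_{\rootedIntervals}y$, and item 4 combines the chain property, the monotonicity of $n_A$, and the countability of its discontinuities to replace $\sup_{y\in N[t]}n_A(y)$ by $n_A$ at the left endpoint of $N[t]$ for almost every $t$. The only cosmetic difference is that you compute the supremum exactly as a right limit, whereas the paper bounds it by $n_A(\inf N_X[t])$ and shows that the exceptional set is countable.
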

\begin{proof}
    The first two items are restatements of the corresponding items in \Cref{slices}.

    The third item follows from the corresponding item in \Cref{slices} along with the observation that $x\preceq_\rootedIntervals y\iff x\leq y$.

    The fourth item requires some additional work.
    Fix $X\in\{I(\delta),S(\delta)\}$; we will later case on the value of $X$.
    Firstly, it is clear that $(\rootedIntervals,\subseteq)$ is a chain and so the fourth item of \Cref{slices} tells us that
    \[
        (\mu\boxtimes\lebesgue)\bigl(N_{\Omega\boxtimes X}[A]\bigr) = \int_0^1\biggl(\sup_{y\in N_X[t]} n_A(y)\biggr)\ dt
    \]

    Now, $n_A$ is a decreasing function and so $\sup_{y\in N_X[t]}n_A(y)\leq n_A\bigl(\inf N_X[t]\bigr)$.
    Define $T\subseteq[0,1)$ to be the set of all $t$ for which $\sup_{y\in N_X[t]}n_A(y)< n_A\bigl(\inf N_X[t]\bigr)$.
    Naturally, if $t\in T$, then $(\inf N_X[t])\notin N_X[t]$ and so $(\inf N_X[t])=t-\delta$.
    Next, observe that if $t\in T$, then $n_A$ \emph{cannot} be right-continuous at $(\inf N_X[t])$.
    Since $n_A$ is decreasing, it has only countably-many discontinuities, and so we conclude that $T$ is countable.
    In particular, $\sup_{y\in N_X[t]}n_A(y)=n_A\bigl(\inf N_X[t]\bigr)$ for $\lebesgue$-a.e.\ $t\in[0,1)$ and so
    \[
        (\mu\boxtimes\lebesgue)\bigl(N_{\Omega\boxtimes X}[A]\bigr) = \int_0^1 n_A\bigl(\inf N_X[t]\bigr)\ dt.
    \]

    We now break into cases depending on $X$.
    If $X=I(\delta)$, then $\inf N_{I(\delta)}[t]=\max\{0,t-\delta\}$ and so
    \begin{align*}
        (\mu\boxtimes\lebesgue)\bigl(N_{\Omega\boxtimes I(\delta)}[A]\bigr) &= \int_0^1 n_A\bigl(\max\{0,t-\delta\}\bigr)\ dt = \delta\cdot n_A(0)+\int_\delta^1 n_A(t-\delta)\ dt\\
                                                                        &= \delta\cdot n_A(0)+\int_0^{1-\delta}n_A(t)\ dt.
    \end{align*}
    If $X=S(\delta)$, then since $N_X[t]$ is open,
    \[
        \inf N_{S(\delta)}[t] = \begin{cases}
            0, & \text{if }t+\delta>1,\\
            \max\{0,t-\delta\}, & \text{otherwise.}
        \end{cases}
    \]
    Therefore,
    \begin{align*}
        (\mu\boxtimes\lebesgue)\bigl(N_{\Omega\boxtimes S(\delta)}[A]\bigr) &= \delta\cdot n_A(0)+\int_0^{1-\delta}n_A\bigl(\max\{0,t-\delta\}\bigr)\ dt = 2\delta\cdot n_A(0)+\int_\delta^{1-\delta}n_A(t-\delta)\ dt\\
                                                                        &= 2\delta\cdot n_A(0)+\int_0^{1-2\delta}n_A(t)\ dt.\qedhere
    \end{align*}
\end{proof}

One particularly nice consequence of the above theorem is that intervals and circles are essentially identical:

\begin{corollary}\label[corollary]{circle=interval1}
    If $\Omega$ is a $\sigma$-finite Polish metric measure space, then $\iso_{\Omega\boxtimes I(\delta)}=\iso_{\Omega\boxtimes S(\delta/2)}$ for any $0<\delta\leq 1$.
\end{corollary}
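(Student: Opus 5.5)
The plan is to reduce both isoperimetric functions to the same optimization over $\rootedIntervals$-layered sets, and then read off equality from the fourth item of \Cref{cubeSlices}. Note first that $\Omega\boxtimes I(\delta)$ and $\Omega\boxtimes S(\delta/2)$ have the same ground set $\Omega\times[0,1)$ up to the null set $\Omega\times\{1\}$, and the same product measure $\mu\boxtimes\lebesgue$. Only the metrics differ. Also $0<\delta/2\leq 1/2$, so the $S$-formula of \Cref{cubeSlices} applies with $2\cdot(\delta/2)=\delta$.

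\textbf{Step 1: reduce to layered sets.} By \Cref{1d}, $R(A)=[0,\lebesgue(A))$ is an iso-nesting of both $I(\delta)$ and $S(\delta/2)$. We apply \Cref{compressionsAreBetter} with $\Omega_1=\Omega$ and $\Omega_2\in\{I(\delta),S(\delta/2)\}$; both are $\sigma$-finite and Polish. This shows that replacing any measurable $A$ by $C_R(A)$ does not decrease the measure and does not increase the neighborhood measure. Hence in both spaces
\[
    \iso(\alpha)=\inf\bigl\{(\mu\boxtimes\lebesgue)(N[A]):A\ \rootedIntervals\text{-layered},\ (\mu\boxtimes\lebesgue)(A)\geq\alpha\bigr\}.
\]
For the interval we should restrict to $\Omega\times[0,1)$, or observe that the slice at $t=1$ is null and removing it only shrinks the neighborhood. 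Some care is needed that $C_R(A)$ is measurable. I expect this to be the main technical point. I would argue it via $C_R(A)=\{(\omega,t):t<u(\omega)\}$ with $u(\omega)=\lebesgue(\slicey A\omega)$, which is measurable by Fubini, so $C_R(A)$ is the region under the graph of a measurable function.

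\textbf{Step 2: compare the formulas.} For an $\rootedIntervals$-layered $A$, the fourth item of \Cref{cubeSlices} gives
\[
    (\mu\boxtimes\lebesgue)\bigl(N_{\Omega\boxtimes I(\delta)}[A]\bigr)=\delta\, n_A(0)+\int_0^{1-\delta}n_A(t)\,dt=(\mu\boxtimes\lebesgue)\bigl(N_{\Omega\boxtimes S(\delta/2)}[A]\bigr).
\]
The functions $n_A$ are identical in the two cases because they depend only on $\Omega$ and the slices of $A$. The measure of $A$ is likewise the same, by the first item. So the two infima in Step 1 range over the same family with the same objective, and the isoperimetric functions coincide.

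The only real obstacle is the measurability and the boundary bookkeeping in Step 1, namely the point $1\in I(\delta)$ versus the ground set $[0,1)$ of $S(\delta/2)$. Everything else follows directly from earlier results.
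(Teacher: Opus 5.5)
Your proposal is correct and is essentially the paper's proof. Both arguments use \Cref{compressionsAreBetter,1d} to reduce to $\rootedIntervals$-layered sets, and then observe that part 4 of \Cref{cubeSlices} gives the same value $\delta\, n_A(0)+\int_0^{1-\delta}n_A(t)\,dt$ for both neighborhoods. Your measurability argument for $C_R(A)$, as the region under the graph of $\omega\mapsto\lebesgue(\slicey A\omega)$, fills in a detail the paper leaves implicit; the boundary point needs no extra care, since $R$ takes values in $\{[0,x):x\in[0,1]\}$ and so every compressed set already lies in $\Omega\times[0,1)$.
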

\begin{proof}
    Let $A\subseteq\Omega\times[0,1)$ be $\rootedIntervals$-layered.
    Then, according to part 4 of \Cref{cubeSlices}, we know that $(\mu\boxtimes\lebesgue)\bigl(N_{\Omega\boxtimes I(\delta)}[A]\bigr)=(\mu\boxtimes\lebesgue)\bigl(N_{\Omega\boxtimes S(\delta/2)}[A]\bigr)$.
    Due to \Cref{compressionsAreBetter,1d}, we know that both $\iso_{\Omega\boxtimes I(\delta)}$ and $\iso_{\Omega\boxtimes S(\delta/2)}$ can be computed by considering only $\rootedIntervals$-layered sets.
    Thus, the claim follows.
\end{proof}

We will need the following consequence of this fact: 
\begin{corollary}\label[corollary]{circle=interval}
    Fix $0<\delta_1,\dots,\delta_n\leq 1$.
    If $X_i\in\{I(\delta_i),S(\delta_i/2)\}$ for all $i$, then
    \[
        \iso_{X_1\boxtimes\dots\boxtimes X_n}=\iso_{I(\delta_1)\boxtimes\dots\boxtimes I(\delta_n)}.
    \]
\end{corollary}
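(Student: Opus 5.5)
The plan is to swap the factors one at a time, each time applying \Cref{circle=interval1} with $\Omega$ equal to the product of all the other factors. The key preliminary point is that the strong product is associative and commutative up to measure-preserving isometry. Concretely, the coordinate permutation $\Omega_1\times\Omega_2\to\Omega_2\times\Omega_1$ and the regrouping $(\Omega_1\times\Omega_2)\times\Omega_3\to\Omega_1\times(\Omega_2\times\Omega_3)$ are bijections that preserve both the max-metric and the product measure. Preservation of the measure holds because product measures agree on rectangles and are $\sigma$-finite, so they are unique. Consequently each such map preserves neighborhoods and measures, and hence preserves $\iso$. By the remark on completions, we need not worry whether we use the product $\sigma$-algebra or its completion.

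I would argue by induction on the number $k$ of indices $i$ with $X_i=S(\delta_i/2)$. If $k=0$ there is nothing to prove. Otherwise, pick such an index $i$ and reorder so that it comes last. Set $\Omega\eqdef\boxtimes_{j\neq i}X_j$. This $\Omega$ is a $\sigma$-finite (indeed probability) Polish metric measure space: each factor is a Polish probability space, and strong products of Polish spaces are Polish, as cited after the definition of the strong product. Now \Cref{circle=interval1} gives
\[
\iso_{\Omega\boxtimes S(\delta_i/2)}=\iso_{\Omega\boxtimes I(\delta_i)},
\]
and by the reordering remark this says exactly that replacing $X_i$ by $I(\delta_i)$ leaves the isoperimetric function of $X_1\boxtimes\dots\boxtimes X_n$ unchanged. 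This lowers $k$ by one, and the induction concludes.

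The only genuine obstacle is the bookkeeping in the reordering step. One must check that $\iso$ is invariant under measure-preserving isometries (immediate from the definitions, since $N[\phi(A)]=\phi(N[A])$) and that the iterated product measures coincide with the reordered ones. There is also a notational point: \Cref{circle=interval1} is stated for $S(\delta/2)$ with $0<\delta\le 1$, which matches the hypothesis on $\delta_i$ here. Everything else is a direct application of the previous corollary.
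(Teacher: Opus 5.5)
Your proposal is correct and follows the paper's proof: induct on the number of circle factors, use commutativity of $\boxtimes$ to move a circle factor to the last coordinate, and apply \Cref{circle=interval1} with $\Omega$ the product of the remaining factors. Your extra checks, that reordering is a measure-preserving isometry and that $\Omega$ is $\sigma$-finite and Polish, make explicit what the paper leaves implicit.
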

\begin{proof}
    We prove this by induction on the number of $i$'s for which $X_i=S(\delta_i/2)$.
    If there is no such $i$, then the claim is trivial, so suppose that there is at least one such $i$.
    Without loss of generality, $X_n=S(\delta_n/2)$ since the strong product is commutative.
    But then we can apply \Cref{circle=interval1} with $\Omega=X_1\boxtimes\dots\boxtimes X_{n-1}$ to conclude that $\iso_{X_1\boxtimes\dots\boxtimes X_n}=\iso_{X_1\boxtimes\dots\boxtimes X_{n-1}\boxtimes I(\delta_n)}$.
    Of course, from the inductive hypothesis, we know that $\iso_{X_1\boxtimes\dots\boxtimes X_{n-1}\boxtimes I(\delta_n)}=\iso_{I(\delta_1)\boxtimes\dots\boxtimes I(\delta_n)}$ and so the claim follows.
\end{proof}

Therefore, we will work solely with products of intervals going forward.
\medskip

The remainder of this section is dedicated to the proof of the following two theorems which are central in our proof of the isoperimetric inequality.

\begin{theorem}\label[theorem]{isotightClassification}
    Let $\Omega$ be a Polish metric probability space and fix $\delta\in(0,1)$.
    Suppose
    that $\iso_\Omega$ is continuous on $(0,1]$ and concave on $[0,1]$.
    Then,
    for any $\alpha\in(0,1]$, there exists some $\xi^*\in[0,1-\delta]$ satisfying:
    \begin{itemize}
        \item If $\xi^*\geq\alpha$, then for any $S\in\Iso_\Omega\bigl({\alpha\over\xi^*}\bigr)$, we have $S\times[0,\xi^*)\in\Iso_{\Omega\boxtimes I(\delta)}(\alpha)$.
        \item If $\xi^*\leq\alpha$, then for any $S\in\Iso_\Omega\bigl({\alpha-\xi^*\over 1-\xi^*}\bigr)$, we have $(S\times[0,1])\cup(\Omega\times[0,\xi^*))\in\Iso_{\Omega\boxtimes I(\delta)}(\alpha)$.
    \end{itemize}
    In particular, if $\Omega$ is iso-tight, then so is $\Omega \boxtimes I(\delta)$.
\end{theorem}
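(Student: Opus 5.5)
By \Cref{compressionsAreBetter} and \Cref{1d}, compressing along the iso-nesting $R$ of $I(\delta)$ never hurts. So when computing $\iso_{\Omega\boxtimes I(\delta)}(\alpha)$ I only consider $\rootedIntervals$-layered sets $A\subseteq\Omega\times[0,1]$. For these, \Cref{cubeSlices} reduces the problem to one about functions. Write $u=u_A$, a decreasing function $[0,1)\to[0,1]$ with $\int_0^1u\ge\alpha$. By \Cref{cubeSlices}(2) we have $n_A\ge\iso_\Omega\circ u=:f\circ u$, and $f$ is increasing. Parts 2 and 4 of \Cref{cubeSlices} then give
\[
\mu\bigl(N[A]\bigr)\ \ge\ \delta f(u(0))+\int_0^{1-\delta}f(u(t))\,dt=:\Phi(u).
\]
So the plan is to minimize $\Phi(u)$ over decreasing $u$ with $\int_0^1 u\ge\alpha$. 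I will then show the minimum is attained by a two-valued step function $u=c\,\mathbf 1_{[0,\xi^*)}$ (first case) or $u=\mathbf 1_{[0,\xi^*)}+c\,\mathbf 1_{[\xi^*,1]}$ (second case). Finally I will check that the displayed sets realize exactly these $u$ with $n_A=f\circ u$.

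For the reduction, fix $u$ and set $a=u(0)$. Since $u$ is decreasing, $u\le a$ everywhere. Let $m$ be the mean of $u$ on $[0,1-\delta]$ and $r$ its mean on $(1-\delta,1]$, so $r\le m\le a$ and $(1-\delta)m+\delta r\ge\alpha$. By concavity of $f$ and Jensen, $\int_0^{1-\delta}f(u)\ge$ the integral of $f$ applied to any decreasing rearrangement with the same mean that is more ``spread out''. More precisely, concavity means $\int_0^{1-\delta}f(u)$ is minimized, among functions with values in $[0,a]$, decreasing, and with prescribed integral $(1-\delta)m$, by a bang-bang function: one that equals $a$ on an initial segment and $0$ afterwards, except for at most one intermediate value. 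This is the extreme-point/majorization argument for the concave functional $u\mapsto\int f(u)$ on the convex set of decreasing functions bounded by $a$. I also lose nothing by setting the tail on $(1-\delta,1]$ to its maximum allowed value, which is the final value of $u$ on $[0,1-\delta]$: the tail does not appear in $\Phi$ and only helps the constraint. After this step the candidates are $u=a$ on $[0,\xi)$ and $u=b\le a$ on $[\xi,1]$. The objective becomes $\delta f(a)+\xi f(a)+(1-\delta-\xi)f(b)$ (taking $\xi\le1-\delta$, since otherwise we can shrink $\xi$), subject to $\xi a+(1-\xi)b\ge\alpha$. This last step is the main obstacle. The Jensen/majorization claim needs care because the extra weight $\delta$ is attached to $f(a)$ and the tail $(1-\delta,1]$ counts in the constraint but not in the objective. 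I would prove it directly by a ``shifting'' argument: replacing $u$ on a subinterval by a two-level step function with the same integral does not increase $\int f(u)$, because $f$ lies above its chords.

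Next is a finite-dimensional optimization over $(a,b,\xi)$. The point is that at an optimum either $a=1$ or $b=0$. Suppose $0<b\le a<1$. Move mass from $b$ to $a$ so as to keep $\xi a+(1-\xi)b$ fixed, i.e.\ $da=\epsilon/\xi$ and $db=-\epsilon/(1-\xi)$. The first-order change of the objective is $(\delta+\xi)f'(a)/\xi-(1-\delta-\xi)f'(b)/(1-\xi)$ times $\epsilon$, in either direction. Concavity ($f'(a)\le f'(b)$) together with $\delta+\xi$ versus $1-\delta-\xi$ does not fix a sign outright. However, the objective is concave in the pair $(a,b)$ along this line, so its minimum over the segment occurs at an endpoint, namely $a=1$ or $b=0$. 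Continuity of $f$ on $(0,1]$ handles the endpoint limits, and compactness in $\xi\in[0,1-\delta]$ gives an optimal $\xi^*$. This yields a single $\xi^*$ for the given $\alpha$. If $\xi^*\ge\alpha$, the case $b=0$ forces $a=\alpha/\xi^*$. If $\xi^*\le\alpha$, the case $a=1$ forces $b=(\alpha-\xi^*)/(1-\xi^*)$. Degenerate ties are resolved by choosing whichever case is consistent.

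Finally, I verify the constructions. For $A=S\times[0,\xi^*)$ with $S\in\Iso_\Omega(\alpha/\xi^*)$, $A$ is $\rootedIntervals$-layered with $u_A=\frac{\alpha}{\xi^*}\mathbf 1_{[0,\xi^*)}$ and $n_A=f(u_A)$ wherever $u_A>0$. Where $u_A=0$ we get $n_A=0=f(0)$, since the slice is empty. Hence \Cref{cubeSlices}(4) gives $\mu(N[A])$ equal to the minimal value of $\Phi$, and $\mu(A)=\alpha$. The second case is identical, using $N_\Omega[\Omega]=\Omega$ so that $n_A=1=f(1)$ on $[0,\xi^*)$. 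Iso-tightness follows immediately, because the minimizers exist whenever $\Omega$ is iso-tight.
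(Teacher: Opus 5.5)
Your outline matches the paper's: compress to $\rootedIntervals$-layered sets, bound $\mu(N[A])$ below by $\delta f(u(0))+\int_0^{1-\delta}f(u)$, collapse $u$ to a two-level step function by concavity, then slide along $\xi a+(1-\xi)b=\alpha$ to an endpoint. However, two steps fail as written.

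\emph{The reduction to a step function loses the tail.} Your bang-bang replacement with levels $a$ and $0$ does minimize $\int_0^{1-\delta}f(u)$ at fixed integral, since $f(0)=0$ gives $f(c)\ge\frac ca f(a)$. But whenever $m<a$ it makes $u$ vanish at the end of $[0,1-\delta]$. Monotonicity then forces the tail to $0$, and you lose $\int_{1-\delta}^1u$ from the constraint. Raising the tail to its maximal allowed value cannot recover this, and your later candidates ($a$ on $[0,\xi)$, $b>0$ on $[\xi,1]$) do not follow. The missing idea, used in \Cref{mainBound}, is to apply the two-point Jensen bound (Proposition~\ref{concaveExpectation}) on $(0,1-\delta)$ with the \emph{actual} extreme values of $u$ there as the two levels: $u(0^+)$ (or $u(0)$) on top and $u((1-\delta)^-)$ at the bottom. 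The bottom level still dominates the tail. You can then either raise the tail to $u((1-\delta)^-)$, or, as the paper does, average the bottom level with the tail into some $x\le u((1-\delta)^-)$ and use that $f$ is increasing.

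\emph{The endpoint analysis omits an endpoint.} For fixed $\xi$, the feasible set $\{\xi a+(1-\xi)b=\alpha,\ 0\le b\le a\le 1\}$ is a segment with endpoints $a=b=\alpha$ and ($a=1$ or $b=0$). At $a=b=\alpha$ the objective equals $\iso_\Omega(\alpha)$; this is constant $u$, i.e.\ the product $S\times[0,1]$. Relatedly, your formula $\delta f(a)+\xi f(a)+(1-\delta-\xi)f(b)$ is wrong at $\xi=0$, where $u(0)=b$ and $\Phi=f(b)$. Forcing $a=1$ there gives $\delta+(1-\delta)\iso_\Omega(\alpha)$ instead.

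This changes the answer. Take $\Omega=I(0.08)$, $\delta=1/2$, $\alpha=0.9$. The $b=0$ branch needs $\xi\ge\alpha>1-\delta$, so it is unavailable. The remaining branch $(\delta+\xi)+(1-\delta-\xi)\iso_\Omega\bigl(\frac{\alpha-\xi}{1-\xi}\bigr)$ has minimum about $0.9865$ near $\xi\approx0.21$ and tends to $0.99$ as $\xi\to0^+$. The true optimum is $\iso_\Omega(0.9)=0.98$. Your procedure would therefore select $\xi^*\approx 0.21$ and output a non-optimal set, and your claimed lower bound on $\Phi$ would be false.

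The paper avoids this by defining $B_\alpha(0)=\iso_\Omega(\alpha)$ separately, so $B_\alpha$ is discontinuous at $0$. It then checks $\lim_{t\to0^+}B_\alpha(t)\ge B_\alpha(0)$ so that $\min B_\alpha$ exists, and the choice $\xi^*=0$ falls under the second bullet. With these two repairs your argument becomes the paper's proof of \Cref{mainBound} and \Cref{isotightClassification}.
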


\begin{theorem}\label[theorem]{optimalStep}
    Let $\Omega$ be a Polish metric probability space and fix $\delta\in(0,1)$.
    Suppose that $\Omega$ is iso-tight, and that $\iso_\Omega$ is continuous on $(0,1]$ and concave on $[0,1]$.
    Suppose that $A\subseteq\Omega\times[0,1)$ is $\rootedIntervals$-layered and that there exist numbers $0<x<y<1$ and $\xi\in(0,1-\delta)$ such that
    \[
        u_A(t)=\begin{cases}
            y, & \text{if }t<\xi,\\
            x, & \text{if }t>\xi.
        \end{cases}
    \]
    If $A\in\Iso_\Omega(\alpha)$, then
    \[
        (1-\delta-\xi)\iso_\Omega(t)+(\delta+\xi)\iso_\Omega\biggl({\alpha-(1-\xi)t\over\xi}\biggr)=\iso_\Omega(\alpha),
    \]
    for all $\max\bigl\{0,{\alpha-\xi\over 1-\xi}\bigr\}\leq t\leq\alpha$.
    Furthermore, note that $\max\bigl\{0,{\alpha-\xi\over 1-\xi}\bigr\}\leq x<\alpha$ here.
\end{theorem}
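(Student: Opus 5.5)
The plan is to sandwich the concave function
\[
    g(t)\eqdef(1-\delta-\xi)\iso_\Omega(t)+(\delta+\xi)\iso_\Omega\biggl({\alpha-(1-\xi)t\over\xi}\biggr),\qquad t\in J\eqdef\Bigl[\max\Bigl\{0,{\alpha-\xi\over 1-\xi}\Bigr\},\alpha\Bigr],
\]
between $\iso_\Omega(\alpha)$ from both sides, and then use concavity to force it to be constant. I read the hypothesis as $A\in\Iso_{\Omega\boxtimes I(\delta)}(\alpha)$, and the conclusion as an identity with $\iso_{\Omega\boxtimes I(\delta)}(\alpha)$ on the right-hand side; this is the only reading under which the statement makes sense. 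Write $y_t\eqdef{\alpha-(1-\xi)t\over\xi}$. The map $t\mapsto y_t$ is affine, and the range $J$ is exactly the set of $t$ with $0\leq t\leq y_t\leq 1$.

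\textbf{Step 1 (the bounds on $x$).} By part 1 of \Cref{cubeSlices}, $\alpha=\xi y+(1-\xi)x$, so $y=y_x$. From $x<y$ we get $x<\alpha$, and from $y<1$ we get $x>{\alpha-\xi\over 1-\xi}$. Together with $x>0$, this places $x$ in the \emph{interior} of $J$, which gives the final remark of the statement.

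\textbf{Step 2 (lower bound $g(x)\leq\iso(\alpha)$).} By part 4 of \Cref{cubeSlices}, the neighborhood of $A$ has measure $\delta n_A(0)+\int_0^{1-\delta}n_A(t)\,dt$. Since $u_A$ takes the value $y$ on $[0,\xi)$ and $x$ on $(\xi,1)$, and $\xi<1-\delta$, this equals $(\delta+\xi)n_A(0)+(1-\delta-\xi)n_A(\xi^+)$. Part 2 of \Cref{cubeSlices} then bounds this below by $(\delta+\xi)\iso_\Omega(y)+(1-\delta-\xi)\iso_\Omega(x)=g(x)$. By optimality of $A$, the left-hand side is $\iso_{\Omega\boxtimes I(\delta)}(\alpha)$.

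\textbf{Step 3 (upper bound $g(t)\geq\iso(\alpha)$ for every $t\in J$).} Here I would build, for each $t\in J$, a competitor $B_t$. Its slices are an optimal set $T\in\Iso_\Omega(y_t)$ for heights in $[0,\xi)$ and an optimal set $S\in\Iso_\Omega(t)$ for heights in $[\xi,1]$; iso-tightness of $\Omega$ supplies $S$ and $T$. Then $B_t$ has measure $\xi y_t+(1-\xi)t=\alpha$. If $S\subseteq T$, the set $B_t$ is $\rootedIntervals$-layered, and part 4 of \Cref{cubeSlices} gives neighborhood measure exactly $g(t)$, hence $\iso_{\Omega\boxtimes I(\delta)}(\alpha)\leq g(t)$.

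\textbf{Step 4 (conclude by concavity).} Since $\iso_\Omega$ is concave and $y_t$ is affine, $g$ is concave on $J$. By Steps 2 and 3, $g$ attains its minimum value $\iso_{\Omega\boxtimes I(\delta)}(\alpha)$ at the interior point $x$. A concave function attaining its minimum at an interior point of an interval is constant on the interior. At an endpoint a concave function could only drop below this value, which Step 3 forbids, so $g$ is constant on all of $J$.

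\textbf{Main obstacle.} The main obstacle is the nesting $S\subseteq T$ in Step 3. Optimal sets of different measures need not be nested. Without nesting, the slices of $N[B_t]$ at heights in $[\xi,\xi+\delta)$ equal $N[S]\cup N[T]$ rather than $N[T]$, and the formula breaks. My first attempt would be to use only the nested family the problem already provides: the slices of $A$ (or of the optimal sets from \Cref{isotightClassification}) form a chain. Failing that, I would argue locally around $x$, which is enough for the concavity argument: if $g$ is concave with a minimum value at one interior point and no smaller value nearby, then it is constant. If nesting still cannot be arranged, I would fall back on a union bound $\mu(N[S]\cup N[T])\leq\iso_\Omega(y_t)+\text{(small)}$, using continuity of $\iso_\Omega$ on $(0,1]$, and take limits as $t\to x$.
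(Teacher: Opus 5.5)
\textbf{The gap is in Step 3.} For an interior point $t$ of $J$, your competitor $B_t$ needs an element of $\Iso_\Omega(t)$ that is contained in an element of $\Iso_\Omega(y_t)$. Nothing in the hypotheses (a general iso-tight Polish probability space) provides this. Without it, the slices of $N[B_t]$ at heights in $[\xi,\xi+\delta)$ are $N[S]\cup N[T]$, and part 4 of \Cref{cubeSlices} no longer computes the measure.

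None of your fallbacks closes the gap:
\begin{itemize}
\item The slices of $A$ have measures only $y$ and $x$. The sets of \Cref{isotightClassification} have one trivial layer. So neither supplies nested optimizers of measures $t$ and $y_t$ for interior $t$.
\item Continuity of $\iso_\Omega$ gives no control on $\mu(N[S]\setminus N[T])$ for unrelated optimizers $S,T$.
\item A local argument around $x$ would still need nested competitors near $x$. Even then it would only show that $g$ is constant near $x$, not on all of $J$: a concave function can be flat near $x$ and decrease towards the ends of $J$.
\end{itemize}
Since Step 4 uses $g\geq\iso_{\Omega\boxtimes I(\delta)}(\alpha)$ at every point of $J$, the proof as written is incomplete.

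\textbf{The fix.} You only need the upper bound at the two endpoints $a\eqdef\max\{0,\tfrac{\alpha-\xi}{1-\xi}\}$ and $\alpha$ of $J$, and there nesting is automatic.
\begin{itemize}
\item At $t=\alpha$ you have $y_\alpha=\alpha$. Take $S\times[0,1]$ with $S\in\Iso_\Omega(\alpha)$, using the same set in both layers. Its neighborhood has measure $\iso_\Omega(\alpha)=g(\alpha)$.
\item At $t=a$ one layer is trivial. If $a=0$, use $T\times[0,\xi)$ with $T\in\Iso_\Omega(\alpha/\xi)$. If $a>0$, then $y_a=1$, and you use $(\Omega\times[0,\xi))\cup(S\times[\xi,1])$ with $S\in\Iso_\Omega(a)$. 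In either case the neighborhood has measure $g(a)$; these are exactly the trivial-layer sets realizing $B_\alpha(0)$ and $B_\alpha(\xi)$ in \Cref{isotightClassification}.
\end{itemize}
Then $g(x)\leq\iso_{\Omega\boxtimes I(\delta)}(\alpha)\leq\min\{g(a),g(\alpha)\}$ with $a<x<\alpha$. Concavity gives $g(x)\geq\min\{g(a),g(\alpha)\}$, so \Cref{nonConstantConcave} forces $g$ to be constant on $J$. With this correction your sandwich-plus-concavity argument is the paper's proof: the paper routes the same two endpoint competitors and the same concavity step through \Cref{mainBound} and \Cref{concaveBoundary}.

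\textbf{Two smaller points.}
\begin{itemize}
\item The constant is $g(\alpha)=\iso_\Omega(\alpha)$. So the right-hand side $\iso_\Omega(\alpha)$ in the statement (the base space) is correct as written; only the hypothesis should read $A\in\Iso_{\Omega\boxtimes I(\delta)}(\alpha)$. Your reading agrees with it once the result is proved, so it is not ``the only reading that makes sense.''
\item In Step 2, the equality with $(\delta+\xi)n_A(0)+(1-\delta-\xi)n_A(\xi^+)$ is unjustified. Constant $u_A$ does not force constant $n_A$, since slices can differ by null sets with non-null neighborhoods. You only need the inequality, and it follows by applying part 2 of \Cref{cubeSlices} pointwise inside the integral from part 4.
\end{itemize}
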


Because $\max\bigl\{0,{\alpha-\xi\over 1-\xi}\bigr\}\leq x<\alpha$, the above theorem applies for $x\leq t\leq\alpha$.
This is the range in which we will explicitly apply the theorem going forward.

\subsection{Proofs of \Cref{isotightClassification,optimalStep}}

Throughout this section, we suppose that $\Omega=(\Omega,\Sigma,\mu,d)$ is a Polish metric probability space and $\delta\in(0,1)$ is fixed.
We furthermore assume that $\Omega$ is iso-tight, and that $\iso_\Omega$ is continuous on $(0,1]$ and concave on $[0,1]$.
We also set $\gamma\eqdef 1-\delta$ for notational ease.
\medskip

The next three statements outline fairly straightforward inequalities pertaining to concave functions.
\begin{prop}\label[prop]{concaveExpectation}
    Fix real numbers $a\leq b$ and suppose that $X$ is any random variable with $a\leq X\leq b$.
    We can therefore write $\E X=\lambda a+(1-\lambda)b$ for some $\lambda\in[0,1]$.
    For any concave function $f\colon[a,b]\to\R$,
    \[
        \E f(X)\geq \lambda f(a)+(1-\lambda)f(b).
    \]
\end{prop}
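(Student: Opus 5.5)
The plan is to use the chord inequality for concave functions pointwise and then take expectations. If $a=b$, then $X=a$ almost surely, and the claim reads $f(a)\geq f(a)$, so there is nothing to prove. From now on assume $a<b$.

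Let $\ell\colon[a,b]\to\R$ be the affine chord through $(a,f(a))$ and $(b,f(b))$:
\[
    \ell(s)=\frac{b-s}{b-a}f(a)+\frac{s-a}{b-a}f(b).
\]
Concavity of $f$ gives $f(s)\geq\ell(s)$ for every $s\in[a,b]$. To see this, write $s=\theta a+(1-\theta)b$ with $\theta=\frac{b-s}{b-a}\in[0,1]$. Then the definition of concavity says $f(s)\geq\theta f(a)+(1-\theta)f(b)=\ell(s)$. Since $a\leq X\leq b$ surely, this yields $f(X)\geq\ell(X)$ pointwise.

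Next take expectations, using that $\ell$ is affine:
\[
    \E f(X)\geq\E\ell(X)=\ell(\E X).
\]
Since $\E X=\lambda a+(1-\lambda)b$, we have $\frac{b-\E X}{b-a}=\lambda$. Hence $\ell(\E X)=\lambda f(a)+(1-\lambda)f(b)$, which is the claimed bound.

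The only point needing care is that $\E f(X)$ makes sense. I would note that a concave function on a closed interval is bounded below by the chord $\ell$. It is also bounded above, since it is continuous on the open interval and has bounded one-sided behaviour at the endpoints. Alternatively, and more simply, I would interpret $\E f(X)$ in the extended sense: $f(X)\geq\ell(X)$, and $\ell(X)$ is a bounded measurable random variable, so the expectation is well defined in $(-\infty,+\infty]$. The inequality then holds in either case.

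Measurability of $f(X)$ needs only a brief comment. A concave $f$ is continuous on $(a,b)$, so $f(X)$ is measurable there, and the two endpoint values form a measurable modification. I do not expect any genuine obstacle here; this step is the only place requiring a remark.
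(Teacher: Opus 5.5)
Your proof is correct and is essentially the paper's argument. The paper writes $X=Ya+(1-Y)b$ with $Y=\frac{b-X}{b-a}\in[0,1]$ and $\E Y=\lambda$, then applies concavity pointwise to get $f(X)\geq Yf(a)+(1-Y)f(b)$; this is exactly your chord bound $f(X)\geq\ell(X)$ followed by linearity of expectation, and your extra remarks on measurability and integrability are harmless additions that the paper leaves implicit.
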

\begin{proof}
    If $a=b$, then the claim is trivial, so suppose that $a<b$.
    Define the random variable $Y$ by $X=Ya+(1-Y)b$ and note that $0\leq Y\leq 1$ since $a\leq X\leq b$.
    Additionally, $\E X=\lambda a+(1-\lambda)b$ and $a<b$ together imply that $\E Y=\lambda$.
    Therefore, since $f$ is concave,
    \[
        \E f(X)=\E f\bigl(Ya+(1-Y)b\bigr)\geq \E\bigl[Yf(a)+(1-Y)f(b)\bigr]=\lambda f(a)+(1-\lambda)f(b).\qedhere
    \]
\end{proof}

\begin{prop}\label[prop]{nonConstantConcave}
    Fix $a\leq b\leq c$.
    For any concave function $f\colon[a,c]\to\R$, we have $f(b)\geq\min\{f(a),f(c)\}$.
    Furthermore, if $a<b<c$, then $f(b)=\min\{f(a),f(c)\}$ if and only if $f$ is constant.
\end{prop}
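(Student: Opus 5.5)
The plan is to derive both parts directly from the definition of concavity, with no machinery from earlier in the paper. For the first claim, if $b\in\{a,c\}$ there is nothing to prove. Otherwise $a<b<c$, and I write $b=\lambda a+(1-\lambda)c$ with $\lambda={c-b\over c-a}\in(0,1)$. Concavity then gives
\[
    f(b)\geq\lambda f(a)+(1-\lambda)f(c)\geq\min\{f(a),f(c)\},
\]
since a convex combination of two numbers is at least their minimum. Equivalently, this is \Cref{concaveExpectation} applied to a random variable $X$ that is constantly equal to $b$.

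For the characterization, assume $a<b<c$. If $f$ is constant, equality is immediate. For the converse, suppose $f(b)=\min\{f(a),f(c)\}$, and by symmetry take $f(a)\leq f(c)$. Then the displayed chain collapses to $f(a)=\lambda f(a)+(1-\lambda)f(c)$. Because $1-\lambda>0$, this forces $f(c)=f(a)$, so $f(a)=f(b)=f(c)$; call this common value $m$.

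It remains to spread this equality to all of $[a,c]$, which is the only step needing some care. First, every $x\in[a,c]$ satisfies $f(x)\geq m$ by the first part, applied with endpoints $a,c$ and middle point $x$. Next, suppose for contradiction that $f(x)>m$ for some $x\in(a,b)$. Then $b$ lies strictly between $x$ and $c$, so concavity on $[x,c]$ gives
\[
    f(b)\geq\mu f(x)+(1-\mu)f(c)>m
\]
for some $\mu\in(0,1)$, contradicting $f(b)=m$. The case $x\in(b,c)$ is symmetric: here $b$ lies strictly between $a$ and $x$, and the same inequality on $[a,x]$ gives the contradiction. Hence $f\equiv m$ on $[a,c]$, so $f$ is constant.
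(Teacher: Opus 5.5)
Your proof is correct and follows essentially the same route as the paper's: the convex-combination bound for the first part, then forcing $f(a)=f(b)=f(c)=m$ and contradicting concavity via a point with value strictly above $m$ on a subinterval that contains $b$ strictly inside. The only difference is that you make explicit, using the first part, why $f\geq m$ on all of $[a,c]$, a step the paper leaves implicit when it picks its point $d$.
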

\begin{proof}
    Writing $b=\lambda a+(1-\lambda)c$ for $\lambda\in[0,1]$, we have
    \[
        f(b)\geq \lambda f(a)+(1-\lambda)f(c)\geq\min\{f(a),f(c)\}.
    \]
    Next, suppose that $f$ is not constant and $b\in(a,c)$ and $f(b)=\min\{f(a),f(c)\}$.
    Since $f(b)\geq\lambda f(a)+(1-\lambda)f(c)$ and $\lambda\in(0,1)$, we find that we must have $f(a)=f(b)=f(c)$.
    However, since $f$ is not constant, there is some $d\in(a,c)$ with $f(d) > f(a)=f(b)=f(c)$; without loss of generality, we may suppose that $d\in(b,c)$.
    But then, the point $(b,f(b))$ lies strictly below the line connecting $(a,f(a))$ to $(d,f(d))$, contradicting the fact that $f$ is concave.
\end{proof}

\begin{prop}\label[prop]{concaveBoundary}
    Fix a concave function $f\colon[0,1]\to\R$, a number $\alpha\in[0,1]$ and a number $\lambda\in(0,1)$.
    For any $A,B\geq 0$ and any $0\leq x\leq y\leq 1$ satisfying $\alpha=\lambda y+(1-\lambda)x$,
    \[
        A f(x)+B f(y)\geq\min\begin{cases}
            (A+B)f(\alpha),\\
            \begin{cases}
                A f(0)+B f\bigl({\alpha\over\lambda}\bigr), & \text{if }\alpha\leq\lambda,\\
                A f\bigl({\alpha-\lambda\over 1-\lambda}\bigr)+B f(1), & \text{if }\alpha\geq\lambda.
            \end{cases}
        \end{cases}
    \]
    If $0<x<y<1$ and equality holds, then the function
    \[
        F(t)=A f(t)+Bf\biggl({\alpha-(1-\lambda)t\over\lambda}\biggr),
    \]
    is constant on the interval $\max\bigl\{0,{\alpha-\lambda\over 1-\lambda}\bigr\}\leq t\leq \alpha$.
\end{prop}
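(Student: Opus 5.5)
Fix $\alpha,\lambda$ and consider the segment of admissible pairs $(x,y)$ with $0\le x\le y\le1$ and $\lambda y+(1-\lambda)x=\alpha$. Parametrize it by $x=t$, $y=g(t)\eqdef\bigl(\alpha-(1-\lambda)t\bigr)/\lambda$. The constraint $y\ge x$ gives $t\le\alpha$. The constraint $y\le1$ gives $t\ge(\alpha-\lambda)/(1-\lambda)$, and together with $t\ge0$ this means $t$ ranges over the interval $J=\bigl[\max\{0,{\alpha-\lambda\over1-\lambda}\},\alpha\bigr]$. Then $Af(x)+Bf(y)=F(t)$. Since $g$ is affine, $F$ is a nonnegative combination of concave functions composed with affine maps, so $F$ is concave on $J$.

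For the inequality, apply \Cref{nonConstantConcave} to $F$ on $J$: $F(t)\ge\min\{F(\text{left endpoint}),F(\alpha)\}$. The right endpoint gives $F(\alpha)=(A+B)f(\alpha)$, since $g(\alpha)=\alpha$. At the left endpoint there are two cases.
\begin{itemize}
\item If $\alpha\le\lambda$, the endpoint is $t=0$ with $g(0)=\alpha/\lambda$, giving $Af(0)+Bf(\alpha/\lambda)$.
\item If $\alpha\ge\lambda$, the endpoint is $t=(\alpha-\lambda)/(1-\lambda)$ with $g(t)=1$, giving $Af\bigl({\alpha-\lambda\over1-\lambda}\bigr)+Bf(1)$.
\end{itemize}
At $\alpha=\lambda$ both expressions coincide, so the case split is consistent. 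This proves the bound.

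For the equality clause, assume $0<x<y<1$. Then $t=x$ is strictly inside $J$. Indeed, $x<y$ means $t<\alpha$. Also $x>0$, and $y<1$ means $t>(\alpha-\lambda)/(1-\lambda)$, so $t$ is strictly greater than the left endpoint. In particular $J$ is nondegenerate. If equality holds, $F(x)=\min\{F(\text{left}),F(\alpha)\}$, and the second part of \Cref{nonConstantConcave} (with $a<b<c$) forces $F$ to be constant on $J$, which is the claim.

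The only points to check are that $f$ is evaluated inside $[0,1]$, and that the second part of \Cref{nonConstantConcave} applies without continuity at the endpoints. The first holds because $g(J)\subseteq[\alpha,1]$. The second holds because that proposition assumes only concavity on the closed interval. I expect no real obstacle beyond this bookkeeping of the endpoints of $J$.
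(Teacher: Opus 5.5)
Your proposal is correct and takes the same approach as the paper. You parametrize the admissible pairs by $x=t$ over $\bigl[\max\{0,\frac{\alpha-\lambda}{1-\lambda}\},\alpha\bigr]$, note that $F$ is concave, and apply \Cref{nonConstantConcave} to $F$. Your endpoint bookkeeping makes explicit what the paper's terser proof leaves implicit: $g(J)\subseteq[\alpha,1]$, and $0<x<y<1$ places $x$ strictly inside the interval.
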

\begin{proof}
    Since $\lambda\in(0,1)$, we can write
    \[
        y={\alpha-(1-\lambda)x\over\lambda}.
    \]
    Therefore, the bounds $0\leq x\leq y\leq 1$ can be rewritten as
    \[
        \max\biggl\{0,{\alpha-\lambda\over 1-\lambda}\biggr\}\leq x\leq \alpha.
    \]
    Now, define the function
    \[
        F(t)\eqdef A f(t)+B f\biggl({\alpha-(1-\lambda)t\over\lambda}\biggr),
    \]
    so $F(x)=Af(x)+Bf(y)$.
    Since $f$ is concave, $F$ is also concave.
    The claim now follows by applying \Cref{nonConstantConcave} to $F$.
\end{proof}


For a function $f\colon\R\to\R$ and a number $t\in\R$, define
\[
    f(t^+)\eqdef\lim_{s\to t^+}f(s)\qquad\text{and}\qquad f(t^-)\eqdef\lim_{s\to t^-}f(s),
\]
should these numbers exist.
Note that if $f$ is monotone, then these parameters are always defined.
\medskip

Recall that we defined $\gamma\eqdef 1-\delta$ for notational ease.
\begin{defn}
    For $\alpha\in[0,1]$, define the function $B_\alpha(t)\colon[0,\gamma]\to[0,1]$ by
    \[
        B_\alpha(t)\eqdef\begin{cases}
            \iso_\Omega(\alpha), & \text{if }t=0,\\
            (1-\gamma+t)+(\gamma-t)\iso_\Omega\bigl({\alpha-t\over 1-t}\bigr), & \text{if }0<t\leq\alpha,\\
            (1-\gamma+t)\iso_\Omega\bigl({\alpha\over t}\bigr), & \text{if }\alpha<t\leq\gamma.
        \end{cases}
    \]
\end{defn}
Note that $B_\alpha$ is continuous at $\alpha$, but not at 0 (unless $\iso_\Omega(\alpha)=1$).
\begin{defn}
    For a weakly decreasing function $u\colon[0,\gamma]\to\R$, define the number $\xi(u)\in(0,\gamma)$ by
    \[
        \int_0^{\gamma}u(t)\ dt=\xi(u)\cdot u(0^+)+\bigl(\gamma-\xi(u)\bigr)\cdot u(\gamma^-)
    \]
    If there are multiple choices for $\xi(u)$, select one arbitrarily.
    Note that $\xi(u)$ always exists since $u(0^+)\geq u(t)\geq u(\gamma^-)$ for all $t\in(0,\gamma)$.
\end{defn}

\begin{theorem}\label[theorem]{mainBound}
    Suppose that $A\subseteq\Omega\times[0,1)$ is $\rootedIntervals$-layered and set $\alpha=(\mu\boxtimes\lebesgue)(A)$.
    Then,
    \[
        (\mu\boxtimes\lebesgue)\bigl(N_{\Omega\boxtimes I(\delta)}[A]\bigr)\geq\min_{t\in[0,\gamma]}B_\alpha(t).
    \]
    Furthermore, suppose that equality holds and set $\xi=\xi(u_A)$ where $u_A(t)=\mu\bigl(\slicex{A}{t}\bigr)$.
    If $0<{\alpha-\xi\cdot u_A(0^+)\over 1-\xi}<u_A(0^+)<1$, then
    \[
        (\gamma-\xi)\iso_\Omega(t)+(1-\gamma+\xi)\iso_\Omega\biggl({\alpha-(1-\xi)t\over \xi}\biggr)=\iso_\Omega(\alpha),
    \]
    for all $\max\bigl\{0,{\alpha-\xi\over 1-\xi}\bigr\}\leq t\leq\alpha$.
\end{theorem}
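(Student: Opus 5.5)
The plan is to bound the neighborhood measure from below by a two-valued expression, and then reduce it with the concavity of $\iso_\Omega$. Write $u=u_A$, $n=n_A$, $y\eqdef u(0^+)$, $x\eqdef u(\gamma^-)$ and $\xi=\xi(u)$. By part 3 of \Cref{cubeSlices}, $u$ and $n$ are decreasing, so $x\le y$.

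\textbf{Step 1: two-point lower bound.} Part 4 of \Cref{cubeSlices} gives
\[
(\mu\boxtimes\lebesgue)\bigl(N_{\Omega\boxtimes I(\delta)}[A]\bigr)=\delta\, n(0)+\int_0^{\gamma}n(t)\,dt .
\]
By part 2, together with the monotonicity of $u$ and of $\iso_\Omega$, we have $n(0)\ge\iso_\Omega(u(0))\ge\iso_\Omega(y)$ and $n(t)\ge\iso_\Omega(u(t))$. Let $T$ be uniform on $(0,\gamma)$. Then $x\le u(T)\le y$, and by the definition of $\xi$ we have $\E u(T)=\tfrac{\xi}{\gamma}y+\tfrac{\gamma-\xi}{\gamma}x$. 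Applying \Cref{concaveExpectation} to the concave function $\iso_\Omega$ gives $\int_0^\gamma \iso_\Omega(u)\ge \xi\iso_\Omega(y)+(\gamma-\xi)\iso_\Omega(x)$. Hence
\[
(\mu\boxtimes\lebesgue)\bigl(N[A]\bigr)\ge(\delta+\xi)\iso_\Omega(y)+(\gamma-\xi)\iso_\Omega(x).
\]

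\textbf{Step 2: matching the volume constraint.} By part 1 of \Cref{cubeSlices}, $\alpha=\int_0^\gamma u+\int_\gamma^1u\le \xi y+(\gamma-\xi)x+\delta x=\xi y+(1-\xi)x$, since $u\le x$ on $(\gamma,1)$. I now lower the arguments until this becomes an equality, which only decreases the bound because $\iso_\Omega$ is increasing. If $\alpha\ge\xi y$, replace $x$ by $x'\eqdef(\alpha-\xi y)/(1-\xi)\in[0,x]$ and keep $y$. Otherwise, set $x'=0$ and replace $y$ by $y'=\alpha/\xi\le y$. In both cases $0\le x'\le y'\le1$ and $\alpha=\xi y'+(1-\xi)x'$.

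\textbf{Step 3: apply \Cref{concaveBoundary}.} Apply it with $f=\iso_\Omega$, $\lambda=\xi$, $A=\gamma-\xi$ and $B=\delta+\xi$, noting $A+B=1$. The three resulting candidates are exactly values of $B_\alpha$:
\begin{itemize}
\item $\iso_\Omega(\alpha)=B_\alpha(0)$;
\item $(\gamma-\xi)\iso_\Omega(0)+(1-\gamma+\xi)\iso_\Omega(\alpha/\xi)=B_\alpha(\xi)$ when $\alpha\le\xi$, using $\iso_\Omega(0)=0$ (take $A=\emptyset$);
\item $(\gamma-\xi)\iso_\Omega\bigl(\tfrac{\alpha-\xi}{1-\xi}\bigr)+(1-\gamma+\xi)\iso_\Omega(1)=B_\alpha(\xi)$ when $\alpha\ge\xi$, using $\iso_\Omega(1)=1$.
\end{itemize}
This proves $(\mu\boxtimes\lebesgue)(N[A])\ge\min_{t\in[0,\gamma]}B_\alpha(t)$.

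\textbf{Equality case.} Now assume equality in the theorem and $0<\tfrac{\alpha-\xi y}{1-\xi}<y<1$. The first inequality shows $\alpha>\xi y$, so Step 2 falls in its first branch, and we get $0<x'<y'=y<1$. Equality throughout forces equality in \Cref{concaveBoundary} at an interior point. Its equality clause then says that $F(t)=(\gamma-\xi)\iso_\Omega(t)+(1-\gamma+\xi)\iso_\Omega\bigl(\tfrac{\alpha-(1-\xi)t}{\xi}\bigr)$ is constant on $\max\{0,\tfrac{\alpha-\xi}{1-\xi}\}\le t\le\alpha$. Evaluating at $t=\alpha$ gives $F(\alpha)=\iso_\Omega(\alpha)$, which is the claimed identity.

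\textbf{Main obstacle.} I expect the delicate part to be this equality bookkeeping. The bound is the minimum of three alternatives, and I must ensure that equality is attained by the \Cref{concaveBoundary} step at the interior point $x'$, not lost in the earlier slack. This slack comes from three places: $n(0)$ versus $\iso_\Omega(u(0^+))$, the tail $\int_\gamma^1u\le\delta x$, and the reduction $x\to x'$. It suffices to note that each of these is a one-sided inequality. Hence if the final bound is attained, then $F(x')$ equals the minimum, and \Cref{nonConstantConcave} applied to the concave function $F$ forces $F$ to be constant.
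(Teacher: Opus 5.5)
Your proposal is correct and follows the paper's proof essentially step for step. The steps match: the two-point bound via \Cref{concaveExpectation}, lowering the bottom value to $x'=(\alpha-\xi y)/(1-\xi)$ by monotonicity of $\iso_\Omega$, then \Cref{concaveBoundary} with $\lambda=\xi$, whose candidates are exactly $B_\alpha(0)$ and $B_\alpha(\xi)$, with the equality clause read off as you do. Your second branch in Step 2 is vacuous, since $\alpha-\xi y=(\gamma-\xi)u(\gamma^-)+\int_\gamma^1u\ge 0$ always.

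The one step you omit is the paper's check that $\min_{t\in[0,\gamma]}B_\alpha(t)$ is actually attained. That check uses continuity of $\iso_\Omega$ on $(0,1]$ and, for $\alpha>0$, $\lim_{t\to0^+}B_\alpha(t)=(1-\gamma)+\gamma\,\iso_\Omega(\alpha)\ge B_\alpha(0)$. Your inequality does not need it, since you bound by $\min\{B_\alpha(0),B_\alpha(\xi)\}$. However, the statement as written uses $\min$, and the later choice of $\xi^*$ in \Cref{isotightClassification} relies on this attainment.
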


In particular, $\iso_{\Omega\boxtimes I(\delta)}(\alpha)\geq\min_{t\in[0,\gamma]}B_\alpha(t)$.
We will see later that every $B_\alpha(t)$ is the measure of some $N_{\Omega \boxtimes I(\delta)}[A]$, so that equality actually holds here.

\begin{proof}[Proof of \Cref{mainBound}]
    Consider selecting a uniformly random number from the interval $(0,\gamma)$, which is a valid probability space since $\gamma>0$.
    If $X$ is any random variable on this space, then $\E X={1\over\gamma}\int_0^\gamma X(t)\ dt$.
    Considering $u_A$ as a random variable, we have $\E u_A={\xi\over\gamma} u_A(0^+)+(1-{\xi\over\gamma})u_A(\gamma^-)$.
    Since $u_A$ is weakly decreasing (\Cref{cubeSlices}), we have $u_A(\gamma^-)\leq u_A(t)\leq u_A(0^+)$ for all $t\in(0,\gamma)$ and so we may apply \Cref{concaveExpectation}
    to bound
    \[
        \int_0^\gamma \iso_\Omega\bigl(u_A(t)\bigr)\ dt = \gamma\E\iso_\Omega\bigl(u_A\bigr)\geq\xi\cdot \iso_\Omega\bigl(u_A(0^+)\bigr)+(\gamma-\xi)\cdot\iso_\Omega\bigl(u_A(\gamma^-)\bigr),
    \]
    since $\iso_\Omega$ is assumed to be concave.
    In particular, we may apply \Cref{cubeSlices} to bound
    \begin{align}
        (\mu\boxtimes\lebesgue)\bigl(N_{\Omega\boxtimes I(\delta)}[A]\bigr) &=
        (1-\gamma) \cdot n_A(0) + \int_0^\gamma n_A(t) dt \nonumber
        \\
        & \geq
        (1-\gamma)\iso_\Omega\bigl(u_A(0)\bigr)+\int_0^\gamma\iso_\Omega\bigl(u_A(t)\bigr)\ dt\nonumber \\
                                                                            &\geq (1-\gamma+\xi)\iso_\Omega\bigl(u_A(0^+)\bigr)+(\gamma-\xi)\iso_\Omega\bigl(u_A(\gamma^-)\bigr),\label{eqn:stepFunction}
    \end{align}
    where we additionally used the fact that $\iso_\Omega$ is increasing and $u_A$ is decreasing in the inequalities.

    Next, define the numbers
    \[
        y\eqdef u_A(0^+),\qquad\text{and}\qquad x\eqdef{\alpha-\xi\cdot u_A(0^+)\over 1-\xi}.
    \]
    By the definition of $\xi=\xi(u_A)$, we note that
    \[
        x={1\over 1-\xi}\biggl((\gamma-\xi)u_A(\gamma^-)+\int_\gamma^1 u_A(t)\ dt\biggr).
    \]
    From this, we immediately see that $x\geq 0$.
    Furthermore, since $u_A$ is decreasing we also have
    \[
        x\leq{1\over 1-\xi}\biggl((\gamma-\xi)u_A(\gamma^-)+\int_\gamma^1 u_A(\gamma^-)\ dt\biggr)=u_A(\gamma^-).
    \]
    Thus, since $\iso_\Omega$ is increasing, we can continue \cref{eqn:stepFunction} to bound
    \begin{equation}\label{eqn:betterStep}
        (\mu\boxtimes\lebesgue)\bigl(N_{\Omega\boxtimes I(\delta)}[A]\bigr)\geq (1-\gamma+\xi)\iso_\Omega(y)+(\gamma-\xi)\iso_\Omega(x).
    \end{equation}

    Next, we know that $0<\xi<\gamma<1$ and that $0\leq x\leq y\leq 1$.
    Furthermore, by construction, $\xi y+(1-\xi)x=\alpha$.
    We may therefore apply \Cref{concaveBoundary} with $\lambda=\xi$, $f=\iso_\Omega$, $A=\gamma-\xi$ and $B=1-\gamma+\xi$ to continue \cref{eqn:betterStep} and bound
    \begin{equation}\label{eqn:boundaryStep}
        (\mu\boxtimes\lebesgue)\bigl(N_{\Omega\boxtimes I(\delta)}[A]\bigr)\geq\min\begin{cases}
            \iso_\Omega(\alpha),\\
            \begin{cases}
                (1-\gamma+\xi)+(\gamma-\xi)\iso_\Omega\bigl({\alpha-\xi\over 1-\xi}\bigr), & \text{if }\xi\leq\alpha,\\
                (1-\gamma+\xi)\iso_\Omega\bigl({\alpha\over\xi}\bigr), & \text{if }\alpha\leq\xi,
            \end{cases}
        \end{cases}
    \end{equation}
    where we additionally used the fact that $\iso_\Omega(0)=0$ and $\iso_\Omega(1)=1$.
    According to \Cref{concaveBoundary}, if $0<x<y<1$ and equality holds in \cref{eqn:boundaryStep}, then
    \[
        (\gamma-\xi)\iso_\Omega(t)+(1-\gamma+\xi)\iso_\Omega\biggl({\alpha-(1-\xi)t\over\xi}\biggr)=\iso_\Omega(\alpha),
    \]
    for all $\max\bigl\{0,{\alpha-\xi\over 1-\xi}\bigr\}\leq t\leq\alpha$.
    Thus, in order to finish the proof, it suffices to show that
    \[
        \min_{t\in[0,\gamma]}B_\alpha(t)\leq\min\begin{cases}
            \iso_\Omega(\alpha),\\
            \begin{cases}
                (1-\gamma+\xi)+(\gamma-\xi)\iso_\Omega\bigl({\alpha-\xi\over 1-\xi}\bigr), & \text{if }\xi\leq\alpha,\\
                (1-\gamma+\xi)\iso_\Omega\bigl({\alpha\over\xi}\bigr), & \text{if }\alpha\leq\xi,
            \end{cases}
        \end{cases}
    \]
    which, by definition, amounts to showing that
    \[
        \min_{t\in[0,\gamma]}B_\alpha(t)\leq\min\bigl\{B_\alpha(0), B_\alpha(\xi)\bigr\},
    \]
    which, provided the minimum exists, is trivial since $\xi<\gamma$.
    Thus, we need only prove that $\min_{t\in[0,\gamma]}B_\alpha(t)$ actually exists.
    To see this, observe that $B_\alpha(t)$ is continuous for all $t\in(0,\gamma]$ since $\iso_\Omega$ is continuous on $(0,1]$; thus it suffices to show that $\lim_{t\to 0^+}B_\alpha(t)\geq B_\alpha(0)$.
    If $\alpha=0$, then $B_\alpha$ is identically 0 so this is trivial. Otherwise
    \[
        \lim_{t\to 0^+}B_\alpha(t)=\lim_{t\to 0^+}\biggl((1-\gamma+t)+(\gamma-t)\iso_\Omega\biggl({\alpha-t\over 1-t}\biggr)\biggr)=(1-\gamma)+\gamma\cdot\iso_\Omega(\alpha)\geq\iso_\Omega(\alpha)=B_\alpha(0).\qedhere
    \]
\end{proof}

We now prove \Cref{isotightClassification}.
\begin{proof}[Proof of \Cref{isotightClassification}]
    Pick $\xi^*\in[0,\gamma]$ so that $B_\alpha(\xi^*)=\min_{t\in[0,\gamma]}B_\alpha(t)$.

    If $\xi^*\geq\alpha$, then fix any $S\in\Iso_\Omega\bigl({\alpha\over\xi^*}\bigr)$ and set $A=S\times[0,\xi^*)$.
    Naturally $(\mu\boxtimes\lebesgue)(A)=\mu(S)\cdot\xi^*=\alpha$, and
    \[
        (\mu\boxtimes\lebesgue)\bigl(N_{\Omega\boxtimes I(\delta)}[A]\bigr)=(\delta+\xi^*)\mu\bigl(N_\Omega(S)\bigr)=(\delta+\xi^*)\iso_\Omega\biggl({\alpha\over\xi^*}\biggr)=B_\alpha(\xi^*).
    \]
    Thus, \Cref{mainBound} implies that $A\in\Iso_\Omega(\alpha)$ as claimed.

    If $\xi^*\leq\alpha$, then fix any $S\in\Iso_\Omega({\alpha-\xi^*\over 1-\xi^*})$ and set $A=\bigl(\Omega\times[0,\xi^*)\bigr)\sqcup\bigl(S\times[\xi^*,1)\bigr)$.
    Naturally, $(\mu\boxtimes\lebesgue)(A)=\xi^*+\mu(S)(1-\xi^*)=\alpha$.
    If $\xi^*=0$, then
    \[
        (\mu\boxtimes\lebesgue)\bigl(N_{\Omega\boxtimes I(\delta)}[A]\bigr)=\mu\bigl(N_\Omega[S]\bigr)=\iso_\Omega(\alpha)=B_\alpha(\xi^*).
    \]
    If $\xi^*>0$, then
    \begin{align*}
        (\mu\boxtimes\lebesgue)\bigl(N_{\Omega\boxtimes I(\delta)}[A]\bigr) &=(\delta+\xi^*)+(1-\delta-\xi^*)\mu\bigl(N_\Omega[S]\bigr)\\
                                                                            &=(\delta+\xi^*)+(1-\delta-\xi^*)\iso_\Omega\biggl({\alpha-\xi^*\over 1-\xi^*}\biggr)=B_\alpha(\xi^*).
    \end{align*}
    In either case, \Cref{mainBound} implies that $A\in\Iso_\Omega(\alpha)$ as claimed.
\end{proof}
An important consequence of the above proof is that $\iso_\Omega(\alpha)=\min_{t\in[0,\gamma]}B_\alpha(t)$.

Finally, we prove \Cref{optimalStep}.
\begin{proof}[Proof of \Cref{optimalStep}]
    Since $A\in\Iso_\Omega(\alpha)$, we know that $(\mu\boxtimes\lebesgue)\bigl(N_{\Omega\boxtimes I(\delta)}[A]\bigr)=\min_{t\in[0,\gamma]}B_\alpha(t)$, so we look to the equality criteria from \Cref{mainBound}.

    We begin by noting that $u_A(0^+)=y$ while $u_A(\gamma^-)=x$.
    Furthermore,
    \[
        \int_0^\gamma u_A(t)\ dt = \xi y+(\gamma-\xi)x,
    \]
    and so $\xi=\xi(u_A)$.
    Additionally,
    \[
        \alpha=\int_0^1 u_A(t)\ dt=\xi y+(1-\xi)x\implies x={\alpha-\xi u_A(0^+)\over 1-\xi}.
    \]
    By assumption, $0<x<y<1$ and so the claim is simply a restatement of the equality criteria from \Cref{mainBound} applied to $A$.
\end{proof}

\section{Cuboids and anti-cuboids}

\begin{defn}[Cuboids and anti-cuboids]
    Fix numbers $x_1,\dots,x_n\in[0,1]$.
    The cuboid with side-lengths $x_1,\dots,x_n$ is defined to be
    \[
        \cube{x_1,\dots,x_n}\eqdef\prod_{i=1}^n[0,x_i],
    \]
    and the anti-cuboid with side-lengths $x_1,\dots,x_n$ is defined to be
    \[
        \anticube{x_1,\dots,x_n}\eqdef\bigcup_{i=1}^n\{a\in[0,1]^n:a_i<x_i\}=[0,1]^n\setminus\prod_{i=1}^n[x_i,1].
    \]
\end{defn}
Observe that the anti-cuboid with side-lengths $x_1,\dots,x_n$ is essentially the complement of the cuboid with side-lengths $1-x_1,\dots,1-x_n$.
\medskip

Already, we should note the following simple fact that will be necessary going forward:
\begin{prop}\label[prop]{bigSeesAll}
   Fix numbers $\delta_1,\dots,\delta_n\in(0,1)$.
   We have $\lebesgue^n\bigl(N_{I(\delta_1,\dots,\delta_n)}[X]\bigr)=1$ if either of the following 
   \begin{enumerate}
       \item $X$ contains a measurable subset $X'$ with $\lebesgue^n(X')\geq 1-\prod_{i=1}^n\delta_i$, or
       \item $X$ is a cuboid with $\lebesgue^n(X)\geq 1-\min_{i\in[n]}\delta_i$.
   \end{enumerate}
\end{prop}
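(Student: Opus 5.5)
The two parts need different arguments. Part 1 is a measure-counting argument on the $\ell_\infty$-balls. Part 2 is a coordinate-by-coordinate computation for products. Throughout I read $I(\delta_1,\dots,\delta_n)$ as $I(\delta_1)\boxtimes\dots\boxtimes I(\delta_n)$. In this space the open unit ball around $p\in[0,1]^n$ is the box
\[
    B_p=\prod_{i=1}^n\bigl((p_i-\delta_i,p_i+\delta_i)\cap[0,1]\bigr),
\]
because $d_1\boxtimes\dots\boxtimes d_n$ is a maximum of the rescaled coordinate metrics. Since $N[X]\subseteq[0,1]^n$ automatically, it suffices in each part to show that all points of $[0,1]^n$ outside a null set lie in $N[X]$.

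For part 1, take $p\notin N[X]$. Then $B_p\cap X=\emptyset$, so $B_p\cap X'=\emptyset$. Each factor $(p_i-\delta_i,p_i+\delta_i)\cap[0,1]$ has length at least $\delta_i$, because $\delta_i<1$. Moreover the length equals exactly $\delta_i$ only when $p_i\in\{0,1\}$; otherwise it is strictly larger.

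Hence $\lebesgue^n(B_p)\geq\prod_i\delta_i$, with strict inequality unless $p$ is one of the $2^n$ corners of the cube. Disjointness of $X'$ and $B_p$ gives
\[
    \lebesgue^n(X')\leq 1-\lebesgue^n(B_p).
\]
Together with the hypothesis $\lebesgue^n(X')\geq 1-\prod_i\delta_i$, this is impossible unless $p$ is a corner. So $[0,1]^n\setminus N[X]$ is contained in a finite set, and $\lebesgue^n(N[X])=1$.

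The only delicate point is this boundary/equality case. A naive argument would only give $\lebesgue^n(B_p)\geq\prod\delta_i$, not the strict inequality. The fix is to observe that equality forces every coordinate of $p$ into $\{0,1\}$.

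For part 2, write $X=\cube{x_1,\dots,x_n}=\prod_i[0,x_i]$. For each $j$, every factor satisfies $x_i\leq 1$, so
\[
    x_j\geq\prod_i x_i=\lebesgue^n(X)\geq 1-\min_i\delta_i\geq 1-\delta_j.
\]
Because the metric is a coordinatewise maximum, the neighbourhood of a product is the product of the one-dimensional neighbourhoods. Thus
\[
    N[X]=\prod_j N_{I(\delta_j)}\bigl[[0,x_j]\bigr]=\prod_j\bigl([0,x_j+\delta_j)\cap[0,1]\bigr).
\]
Since $x_j+\delta_j\geq 1$ for every $j$, this contains $[0,1)^n$, which has measure $1$. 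I do not expect any real obstacle in part 2.
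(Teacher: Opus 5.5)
Your proof is correct and follows the same route as the paper. In part 1 you use that the open ball around an uncovered point is a box of measure at least $\prod_i\delta_i$ that is disjoint from $X$. In part 2 you run the paper's contrapositive computation forwards; the paper argues that $\lebesgue^n(N[X])=\prod_i\min\{1,x_i+\delta_i\}<1$ forces some $x_i<1-\delta_i$.

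Your extra observation is genuinely needed: the ball has measure strictly larger than $\prod_i\delta_i$ unless the point is a corner of the cube. This handles the boundary case $\lebesgue^n(X')=1-\prod_i\delta_i$. The paper's one-line argument only yields $\lebesgue^n(X')\leq 1-\prod_i\delta_i$, so it glosses over that case.
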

\begin{proof}
   We prove the contrapositive, so suppose that $\lebesgue^n\bigl(N[X]\bigr)<1$.
   In particular, there is some $y\notin N[X]$, which implies that $N[y]\cap X=\varnothing$.
   Naturally, $\lebesgue^n\bigl(N[y]\bigr)\geq \prod_{i=1}^n\delta_i$ and so $X$ is disjoint from a set of measure $\prod_{i=1}^n\delta_i$ which implies the first item.

   Now, suppose that $X=\cube{x_1,\dots,x_n}$ for some $x_1,\dots,x_n\in[0,1]$.
   It is easy to see that
   \[
       \lebesgue^n\bigl(N[X]\bigr)=\prod_{i=1}^n\min\{1,x_i+\delta_i\};
   \]
   thus, since this quantity is strictly less than $1$, there must be some $i\in[n]$ for which $x_i<1-\delta_i$.
   Therefore,
   \[
       \lebesgue^n(X)=\prod_{i=1}^nx_i<1-\delta_i\leq1-\min_{i\in[n]}\delta_i.\qedhere
   \]
\end{proof}

For any numbers $\delta_1,\dots,\delta_n\in(0,1)$, define the functions $\ncube{\delta_1,\dots,\delta_n},\nanticube{\delta_1,\dots,\delta_n}\colon[0,1]^n\to[0,1]$ by
\begin{align*}
    \ncube{\delta_1,\dots,\delta_n}(x_1,\dots,x_n) &\eqdef \lebesgue^n\bigl(N_{I(\delta_1,\dots,\delta_n)}\bigl[\cube{x_1,\dots,x_n}\bigr]\bigr)=\prod_{i=1}^n\min\{1,x_i+\delta_i\},\\
    \nanticube{\delta_1,\dots,\delta_n}(x_1,\dots,x_n) &\eqdef \lebesgue^n\bigl(N_{I(\delta_1,\dots,\delta_n)}\bigl[\anticube{x_1,\dots,x_n}\bigr]\bigr)=1-\prod_{\substack{i\in[n]:\\ x_i>0}}\max\{0,1-x_i-\delta_i\}.
\end{align*}

\begin{defn}[Efficient cuboids and anti-cuboids]
    Fix numbers $0<\delta_1\leq\dots\leq\delta_n<1$.

    The cuboid $\cube{x_1,\dots,x_n}$ is said to be \emph{efficient in $I(\delta_1,\dots,\delta_n)$} if there is some $t\in\{0,\dots,n\}$ such that $x_i<1-\delta_i$ if $i\leq t$ and $x_i=1$ if $i>t$.

    The anti-cuboid $\anticube{x_1,\dots,x_n}$ is said to be \emph{efficient in $I(\delta_1,\dots,\delta_n)$} if there is some $t\in\{0,\dots,n\}$ such that $0<x_i<1-\delta_i$ if $i\leq t$ and $x_i=0$ if $i>t$.
\end{defn}

We first justify this terminology by showing that efficient cuboids and anticuboids can be made at least as good for our problem as any other.

\begin{lemma}\label[lemma]{efficientAreBetter}
    Fix numbers $0<\delta_1\leq\dots\leq\delta_n<1$ and a vector $\vec x\in[0,1]^n$.
    \begin{enumerate}
        \item If $\prod_{i=1}^nx_i<1-\delta_1$, then there is an efficient cuboid $C$ in $I(\delta_1,\dots,\delta_n)$ satisfying
            \[
                \lebesgue^n(C)=\lebesgue^n(\cube{\vec x})\qquad\text{and}\qquad\lebesgue^n\bigl(N_{I(\delta_1,\dots,\delta_n)}[C]\bigr)\leq\lebesgue^n\bigl(N_{I(\delta_1,\dots,\delta_n)}[\cube{\vec x}]\bigr).
            \]
        \item If $1-\prod_{i=1}^n(1-x_i)<1-\prod_{i=1}^n\delta_i$, then there is an efficient anti-cuboid $A$ in $I(\delta_1,\dots,\delta_n)$ satisfying
            \[
                \lebesgue^n(A)=\lebesgue^n(\anticube{\vec x})\qquad\text{and}\qquad\lebesgue^n\bigl(N_{I(\delta_1,\dots,\delta_n)}[A]\bigr)\leq\lebesgue^n\bigl(N_{I(\delta_1,\dots,\delta_n)}[\anticube{\vec x}]\bigr).
            \]
    \end{enumerate}
\end{lemma}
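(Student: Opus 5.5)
Both parts go by an explicit rearrangement of the side-lengths. We use the closed formulas $\ncube{\delta_1,\dots,\delta_n}(\vec x)=\prod_i\min\{1,x_i+\delta_i\}$ and $\nanticube{\delta_1,\dots,\delta_n}(\vec x)=1-\prod_{i:x_i>0}\max\{0,1-x_i-\delta_i\}$. Each part is two local moves applied repeatedly: saturate coordinates that are ``wasted'', then sort the remaining coordinates so that the non-trivial ones use the smallest $\delta$'s. Each move keeps the volume fixed and does not increase the neighbourhood measure.

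\textbf{Part 1 (cuboids).}
\begin{enumerate}
\item \emph{Saturation.} Suppose some coordinate has $1-\delta_i\le x_i<1$; then its neighbourhood factor is already $1$. Replace $x_i$ by $1$ and divide some other coordinate $x_j<1$ by $x_i$. This keeps the volume, and the factor $\min\{1,x_j+\delta_j\}$ does not decrease. To fix this, argue with the product directly: the total neighbourhood factor for coordinates $i,j$ goes from $1\cdot\min\{1,x_j+\delta_j\}$ to $\min\{1,x_j/x_i+\delta_j\}$, and this can increase. So saturation cannot be done naively; it must be combined with merging.
\item \emph{Merging.} Use the single-step inequality
\[
(a+\delta_i)(b+\delta_j)\ \ge\ (ab+\delta_j)\cdot 1 \qquad\text{when } a\ge 1-\delta_i,
\]
which holds because $(a+\delta_i)\ge1$ and $b+\delta_j\ge ab+\delta_j$. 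So a saturated coordinate can be pushed to $1$ while the other coordinate shrinks to $ab$, and the cost does not go up.
\item \emph{Iterate.} Repeat until every non-full coordinate has $x_i<1-\delta_i$. This terminates because the number of coordinates equal to $1$ increases each time. If every coordinate would become full, the volume would be $1$, contradicting $\prod x_i<1-\delta_1$. In the end the last non-full coordinate satisfies $x_j<1-\delta_j$; this needs checking via $ab\le\prod x_i<1-\delta_1\le1-\delta_j$, which fails when $\delta_j>\delta_1$. I handle that by placing the merged coordinate at index $1$ (see the next step).
\item \emph{Sorting.} Permute so that the non-full coordinates occupy indices $1,\dots,t$. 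This uses the exchange inequality: if $x_i=1$ and $x_j<1$ with $i<j$ (so $\delta_i\le\delta_j$), swapping the roles makes the cost $\min\{1,x_j+\delta_i\}$ instead of $\min\{1,x_j+\delta_j\}$, which is no larger.
\item \emph{Matching non-full values to indices.} Among the non-full coordinates, pair larger values with smaller $\delta$. This follows from the rearrangement inequality $(a+\delta)(b+\delta')\le(b+\delta)(a+\delta')$ for $a\ge b$, $\delta\le\delta'$. The largest value then sits at index $1$, where $<1-\delta_1$ is guaranteed by the volume hypothesis whenever any merge occurred.
\end{enumerate}

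\textbf{Part 2 (anti-cuboids).} This is the complementary argument with $y_i=1-x_i$, volume $1-\prod y_i$, and cost $1-\prod_{x_i>0}\max\{0,y_i-\delta_i\}$.
\begin{enumerate}
\item A coordinate with $x_i\ge1-\delta_i$ contributes factor $0$, so the cost is already $1$. The hypothesis $\prod y_i>\prod\delta_i$ guarantees an efficient anti-cuboid of the same volume with cost at most $1$ (for instance one with $y_i>\delta_i$ for all active $i$), so we may assume every $x_i<1-\delta_i$.
\item The coordinates with $x_i=0$ are excluded from the product. Moving a zero to a larger index and a positive value to a smaller index changes the factor $y-\delta_j$ to $y-\delta_i\ge y-\delta_j$. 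This increases the subtracted product and so decreases the cost.
\item Finish with the same rearrangement among the positive coordinates, using $(y-\delta)(y'-\delta')\ge(y'-\delta)(y-\delta')$ for $y\ge y'$, $\delta\le\delta'$. One must check that $0<x_i<1-\delta_i$ is preserved, and this again follows from the volume bound.
\end{enumerate}

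The main obstacle I expect is the bookkeeping that keeps the final configuration efficient. In Part 1 this means that after merges the surviving coordinates satisfy $x_i<1-\delta_i$ at their new indices; in Part 2 it means showing the degenerate case is covered by the hypothesis. I would first try an extremal argument: among all vectors of the given volume with minimal cost, take one maximizing the number of coordinates equal to $1$ (respectively $0$), and show it is efficient using the exchange inequalities above.
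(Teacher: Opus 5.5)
\textbf{Verdict: genuine gap.} Several of your moves are sound:
\begin{itemize}
\item the merging inequality of Part~1 step~2;
\item the exchange of a full coordinate with a non-full one in Part~1 step~4;
\item pushing zero coordinates to the back in Part~2 step~2.
\end{itemize}
The step that fails is the ``matching'' step (Part~1 step~5, and its analogue Part~2 step~3). Both rearrangement inequalities are stated in the wrong direction. For $a\ge b$ and $\delta\le\delta'$,
\[
(a+\delta)(b+\delta')-(b+\delta)(a+\delta')=(a-b)(\delta'-\delta)\ge 0,
\]
so putting the larger side-length on the smaller $\delta$ \emph{enlarges} the neighbourhood. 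Concretely, take $\delta_1=0.1$, $\delta_2=0.3$. The efficient cuboid with sides $(0.2,0.5)$ has neighbourhood $0.3\cdot 0.8=0.24$. Your rearrangement $(0.5,0.2)$ gives $0.6\cdot 0.5=0.30$. This is consistent with the optimizers found later in the paper, which have $x_i\propto\delta_i$. Likewise, in Part~2, $(y-\delta)(y'-\delta')-(y'-\delta)(y-\delta')=-(y-y')(\delta'-\delta)\le 0$. You invoke step~5 precisely to certify that the final cuboid is efficient, so the argument as written does not prove the lemma. The supporting claim $ab\le\prod_i x_i$ in step~3 is also backwards in general, since $ab\ge\prod_i x_i$.

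\textbf{Repair.} The matching steps are unnecessary, and deleting them leaves a correct proof.
\begin{itemize}
\item Efficiency is preserved by moving a coordinate with $x<1-\delta_j$ to any index $i<j$, because $1-\delta_j\le 1-\delta_i$. This, not the volume bound, is what keeps $0<x_i<1-\delta_i$. It is exactly the paper's device of listing $\{i:x_i<1-\delta_i\}=\{t_1<\dots<t_k\}$ and setting $x_i'=x_{t_i}$, noting $t_i\ge i$.
\item Your merging iteration halts in one of two ways. Either at least two non-full coordinates remain, all unsaturated, and step~4 keeps them unsaturated. Or exactly one non-full coordinate remains; its value is then the whole volume, hence $<1-\delta_1$, so it can be placed at index~$1$. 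This is the only place the hypothesis of Part~1 is needed.
\item The paper avoids the iteration by acting globally. It sets every saturated coordinate to $1$, which raises the volume without changing the neighbourhood. It then compacts the remaining coordinates to the front and shrinks the first side to restore the volume.
\end{itemize}

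\textbf{A further gap in Part~2 step~1.} You only assert that an efficient anti-cuboid of volume $M$ exists; it needs a construction. One option: take $t=n$ and $x_i=1-\delta_i^s$, with $s\in(0,1)$ chosen so that $\bigl(\prod_i\delta_i\bigr)^s=1-M$. Such an $s$ exists because $0<M<1-\prod_i\delta_i$. Here $M>0$ because some $x_i\ge 1-\delta_i>0$. The paper instead takes the least $t$ with $M<1-\prod_{i\le t}\delta_i$ and sets $x_i'=1-(1+\epsilon)\delta_i$.
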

\begin{proof}
    We focus first on cuboids.
    Enumerate the set $\{i\in[n]:x_i<1-\delta_i\}=\{t_1,\dots,t_k\}$ where $t_1<\dots<t_k$; note that we could have $k=0$.
    Define $x_i'=x_{t_i}$ for $i\leq k$ and $x_i'=1$ for $i>k$.
    Certainly $\lebesgue^n(\cube{\vec x'})\geq\lebesgue^n(\cube{\vec x})$.
    Furthermore, since $\delta_1\leq\dots\leq\delta_n$,
    \[
        \lebesgue^n\bigl(N[\cube{\vec x}]\bigr)=\prod_{i=1}^k(x_{t_i}+\delta_{t_i})\geq\prod_{i=1}^k(x_i'+\delta_i)=\lebesgue^n\bigl(N[\cube{\vec x'}]\bigr).
    \]
    Additionally, $\cube{\vec x'}$ is clearly efficient in $I(\delta_1,\dots,\delta_n)$.

    Now, define $x_1''=x_1'\cdot\lebesgue^n(\cube{\vec x})/\lebesgue^n(\cube{\vec x'})$ and $x_i''=x_i'$ for all $i\neq 1$.
    By construction $\lebesgue^n(\cube{\vec x''})=\lebesgue^n(\cube{\vec x})$ and $\lebesgue^n\bigl(N[\cube{\vec x''}]\bigr)\leq\lebesgue^n\bigl(N[\cube{\vec x}]\bigr)$, so we just need to show that $\cube{\vec x''}$ is efficient.
    If $x_1'<1-\delta_1$, then $\cube{\vec x''}$ is clearly efficient since $x_1''\leq x_1'$.
    Otherwise, $x_1'=1$ which implies that $x_i'=1$ for all $i\in[n]$, meaning that $\lebesgue^n(\cube{\vec x'})=1$.
    Therefore, $x_1''=\lebesgue^n(\cube{\vec x})<1-\delta_1$ by assumption and so $\cube{\vec x''}$ is efficient.
    \medskip

    We now consider anti-cuboids.
    Suppose first that there is some $i\in[n]$ for which $x_i\geq 1-\delta_i$.
    Then $N[\anticube{\vec x}]=[0,1]^n$ and so we simply need to show that there is an efficient anti-cuboid $A$ with $\lebesgue^n(A)=\lebesgue^n(\anticube{\vec x})$.
    Set $M=\lebesgue^n(\anticube{\vec x})$ and let $t\in[n]$ be the smallest integer for which $M<1-\prod_{i=1}^t\delta_i$; note that $t$ exists since $M<1-\prod_{i=1}^n\delta_i$ by assumption.
    Naturally, we may find some $\epsilon>0$ for which $M=1-(1+\epsilon)^t\prod_{i=1}^t\delta_i$.
    Define $x_i'=1-(1+\epsilon)\delta_i$ for each $i\leq t$ and $x_i'=0$ for each $i>t$.
    By construction, $\lebesgue^n(\anticube{\vec x'})=M$, so we must show that $\anticube{\vec x'}$ is efficient.
    Since $\epsilon>0$, certainly $x_i'<1-\delta_i$ for each $i\leq t$.
    Now, suppose for the sake of contradiction that there is some $i\leq t$ for which $x_i'\leq 0$, i.e.\ $(1+\epsilon)\delta_i\geq 1$.
    Since $\delta_1\leq\dots\leq\delta_n$, we would, in particular, have $(1+\epsilon)\delta_t\geq 1$.
    In this case, however,
    \[
        M=1-(1+\epsilon)^t\prod_{i=1}^t\delta_i\leq 1-(1+\epsilon)^{t-1}\prod_{i=1}^{t-1}\delta_i<1-\prod_{i=1}^{t-1}\delta_i,
    \]
    which contradicts the definition of $t$.

    With this out of the way, we may suppose that $x_i<1-\delta_i$ for all $i\in[n]$.
    Enumerate the set $\{i\in[n]:x_i>0\}=\{t_1,\dots,t_k\}$ where $t_1<\dots<t_k$; note that we could have $k=0$.
    Define $x_i'=x_{t_i}$ for $i\leq k$ and $x_i'=0$ for $i>k$.
    Certainly $\lebesgue^n(\anticube{\vec x'})=\lebesgue^n(\anticube{\vec x})$ and $\anticube{\vec x'}$ is efficient in $I(\delta_1,\dots,\delta_n)$.
    Finally, since $\delta_1\leq\dots\leq\delta_n$,
    \[
        \lebesgue^n\bigl(N[\anticube{\vec x}]\bigr)=1-\prod_{i=1}^k(1-x_{t_i}-\delta_{t_i})\geq 1-\prod_{i=1}^k(1-x_i'-\delta_i)=\lebesgue^n\bigl(N[\anticube{\vec x'}]\bigr),
    \]
    which establishes the claim.
\end{proof}

Next, for numbers $\delta_1,\dots,\delta_n\in(0,1)$, define the functions $\bcube{\delta_1,\dots,\delta_n},\banticube{\delta_1,\dots,\delta_n}\colon[0,1]\to\R\cup\{+\infty\}$ by
\begin{align*}
    \bcube{\delta_1,\dots,\delta_n}(x) & \eqdef\begin{cases}
        \bigl(x^{1/n}+\bigl(\prod_{i=1}^n\delta_i\bigr)^{1/n}\bigr)^n, & \text{if }0\leq x\leq\prod_{i=1}^n{\delta_i\over\delta_n},\\
        +\infty, & \text{otherwise}.
    \end{cases}\\
    \banticube{\delta_1,\dots,\delta_n}(x) & \eqdef\begin{cases}
        1-\bigl((1-x)^{1/n}-\bigl(\prod_{i=1}^n\delta_i\bigr)^{1/n}\bigr)^n, & \text{if }0\leq x\leq 1-\prod_{i=1}^n\delta_i,\\
        +\infty, & \text{otherwise}.
    \end{cases}
\end{align*}
We extend these definitions to include the case when $n=0$ by defining
\[
    \bcube{()}\eqdef 1\qquad\text{and}\qquad \banticube{()}\eqdef 1.
\]

The functions $\bcube{\delta_1,\dots,\delta_t}$ and $\banticube{\delta_1,\dots,\delta_t}$ compute the size of the neighborhood of particular cuboids and anticuboids.
They thus yield upper bounds on the isoperimetric inequality of $I(\delta_1,\dots,\delta_n)$.
\begin{theorem}\label[theorem]{balancedUpperBound}
    Fix an integer $n\geq 1$ and number $0<\delta_1\leq\dots\leq\delta_n<1$.
    \[
        \iso_{I(\delta_1,\dots,\delta_n)}\leq\min_{t\in\{0,\dots,n\}}\min\bigl\{\bcube{\delta_1,\dots,\delta_t},\ \banticube{\delta_1,\dots,\delta_t}\bigr\}.
    \]
\end{theorem}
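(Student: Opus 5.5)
Since $\iso_{I(\delta_1,\dots,\delta_n)}(\alpha)$ is an infimum, it suffices, for each fixed $\alpha\in[0,1]$ and each $t\in\{0,\dots,n\}$, to exhibit a set of measure at least $\alpha$ whose neighborhood has measure at most $\bcube{\delta_1,\dots,\delta_t}(\alpha)$, and similarly for $\banticube{\delta_1,\dots,\delta_t}(\alpha)$. If the relevant value is $+\infty$ there is nothing to prove. For $t=0$ both functions equal $1$, and the whole space $[0,1]^n$ has measure $1\geq\alpha$ with neighborhood of measure $1$, so that case is immediate. Throughout, I will use the product formulas for $\ncube{\cdot}$ and $\nanticube{\cdot}$ recorded before the statement, with the convention that coordinates $i>t$ are handled trivially.

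For the cuboid bound with $1\leq t\leq n$, set $D=\bigl(\prod_{i=1}^t\delta_i\bigr)^{1/t}$ and take the side lengths $x_i=\alpha^{1/t}\delta_i/D$ for $i\leq t$ and $x_i=1$ for $i>t$. Then $\lebesgue^n(\cube{\vec x})=\alpha^{\sum_{i\le t}1/t}\prod_{i\leq t}\delta_i/D^t=\alpha$. We need $x_i\leq 1$ so that this is a genuine cuboid in $[0,1]^n$. The hypothesis $\alpha\leq\prod_{i\leq t}\delta_i/\delta_t=D^t/\delta_t$ together with $\delta_i\leq\delta_t$ gives $x_i\leq x_t=\alpha^{1/t}\delta_t/D$. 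Since $\alpha\leq 1$ and $D\leq\delta_t$, we get $x_t\leq 1$; for the sharper constraint one would compare $\alpha^{1/t}$ with $D/\delta_t$, and I expect the stated domain to be exactly what ensures $x_t\le1$ (possibly up to the need for $\min$, which only helps). Then $\ncube{\delta_1,\dots,\delta_n}(\vec x)\leq\prod_{i\leq t}(x_i+\delta_i)=\prod_{i\leq t}\delta_i(\alpha^{1/t}/D+1)=(\alpha^{1/t}+D)^t$, using that coordinates $i>t$ contribute the factor $\min\{1,1+\delta_i\}=1$.

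For the anti-cuboid bound with $1\leq t\leq n$ and $\alpha\leq 1-D^t$, I take $x_i=1-\delta_i(1-\alpha)^{1/t}/D$ for $i\leq t$ and $x_i=0$ for $i>t$. The constraint gives $(1-\alpha)^{1/t}\geq D$, so $x_i\leq 1-\delta_i$. To ensure $x_i\geq0$ I need $\delta_i(1-\alpha)^{1/t}\leq D$. This can fail, which is the main obstacle. In that case I would fall back on \Cref{bigSeesAll}: when some $x_i$ would be negative, the target value $1-((1-\alpha)^{1/t}-D)^t$ is compared against $1$, or one reduces $t$. An alternative that also settles the failing case is to invoke \Cref{efficientAreBetter}(2), which produces an efficient anti-cuboid of the same measure with no larger neighborhood.

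In the good case $x_i\ge 0$, the anti-cuboid has measure $1-\prod_{i\leq t}(1-x_i)=1-\prod_{i\le t}\delta_i(1-\alpha)/D^t=\alpha$. Its neighborhood has measure $1-\prod_{i\leq t}\max\{0,1-x_i-\delta_i\}=1-\prod_{i\le t}\delta_i((1-\alpha)^{1/t}/D-1)=1-((1-\alpha)^{1/t}-D)^t$. Coordinates with $x_i=0$ are omitted from the product, as in the formula for $\nanticube{\cdot}$. If some $x_i$ with $i\leq t$ equals $0$, the product simply runs over fewer indices. That removes factors of size at most $1$, which could only increase the neighborhood, so I would argue these degenerate boundary cases separately by continuity in $\alpha$, using monotonicity of $\iso$.

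Taking the minimum over $t$ and over the two constructions yields the claimed bound.
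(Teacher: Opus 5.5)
There is a genuine gap in the anti-cuboid case where $\delta_t(1-\alpha)^{1/t}>D$, so that $x_t<0$. You are right to flag this as the main obstacle. The paper's own proof simply asserts $1\ge 1-x_i$ at this point, and that is false in general: $t=n=2$, $\delta_1=0.01$, $\delta_2=0.5$, $\alpha=0.1$ gives $1-x_2\approx 6.7$. However, none of your fallbacks closes it.

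\Cref{bigSeesAll} only certifies that some neighborhood has measure $1$. The target $1-\bigl((1-\alpha)^{1/t}-D\bigr)^t$ is strictly below $1$ whenever $\alpha<1-D^t$, so that lemma cannot help. \Cref{efficientAreBetter}(2) only improves an anti-cuboid you already have. In this regime no anti-cuboid with exactly $x_1,\dots,x_t>0$ reaches the target. To see this, write $y_i=1-x_i$. Superadditivity of the geometric mean gives $\prod_{i\le t}(y_i-\delta_i)\le\bigl((1-\alpha)^{1/t}-D\bigr)^t$ under $\prod_{i\le t}y_i=1-\alpha$. Equality holds only at the balanced point $y_i=\delta_i(1-\alpha)^{1/t}/D$, which has $y_t>1$. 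So you really must pass to a smaller $t$. ``One reduces $t$'' is therefore exactly the step that needs proof: you must show $\banticube{\delta_1,\dots,\delta_{t-1}}(\alpha)\le\banticube{\delta_1,\dots,\delta_t}(\alpha)$ in this regime.

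That inequality has a short proof. Let $\beta=1-\alpha$ and take the balanced $y_i$ above, so $y_i\ge\delta_i$ and $y_t>1$. Set $z_1=y_1y_t$ and $z_i=y_i$ for $2\le i\le t-1$. Then $\prod_{i<t}z_i=\beta$ and $z_i\ge\delta_i$. Since $y_t\ge1$, also $z_1-\delta_1\ge y_t(y_1-\delta_1)\ge(y_t-\delta_t)(y_1-\delta_1)$. Hence
\[
    \bigl(\beta^{1/t}-D\bigr)^t=\prod_{i\le t}(y_i-\delta_i)\le\prod_{i<t}(z_i-\delta_i)\le\Bigl(\beta^{1/(t-1)}-\bigl({\textstyle\prod_{i<t}}\delta_i\bigr)^{1/(t-1)}\Bigr)^{t-1},
\]
where the last step is again superadditivity of the geometric mean, with $a_i=z_i-\delta_i$ and $b_i=\delta_i$. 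Moreover $\beta\ge\prod_{i<t}\delta_i$, so $\alpha$ lies in the finite domain of $\banticube{\delta_1,\dots,\delta_{t-1}}$. Descending induction on $t$ then finishes the argument, because for $t=1$ your construction gives $x_1=\alpha\ge0$.

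There are two smaller slips.
\begin{itemize}
\item In the cuboid part, $\prod_{i=1}^t\frac{\delta_i}{\delta_t}=D^t/\delta_t^t$, not $D^t/\delta_t$. Also, ``$\alpha\le1$ and $D\le\delta_t$ give $x_t\le1$'' is a non sequitur, since $\delta_t/D\ge1$. The correct check is $x_t\le1\iff\alpha\le(D/\delta_t)^t$, which is exactly the domain.
\item When some $x_i=0$, you have the direction reversed. Dropping a factor $1-\delta_i<1$ enlarges the product and so shrinks the neighborhood. That case is favorable and needs no continuity argument.
\end{itemize}
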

\begin{proof}
    Certainly $\iso_{I(\delta_1,\dots,\delta_n)}\leq 1=\bcube{()}=\banticube{()}$ always, so fix $t\in[n]$ and $\alpha\in[0,1]$.

    If $\alpha>\prod_{i=1}^t{\delta_i\over\delta_t}$, then certainly $\iso_{I(\delta_1,\dots,\delta_n)}(\alpha)\leq +\infty=\bcube{\delta_1,\dots,\delta_t}(\alpha)$.
    Otherwise, $\alpha\leq\prod_{i=1}^t{\delta_i\over\delta_t}$; define $x_i=\delta_i\bigl({\alpha\over\prod_{i=1}^t\delta_i}\bigr)^{1/t}$ for $i\leq t$ and $x_i=1$ for $i>t$.
    Note that for each $i\leq t$, $x_i\leq\delta_i (1/\delta_t)\leq 1$ and so the cuboid $\cube{\vec x}$ exists and has $\lebesgue^n(\cube{\vec x})=\alpha$.
    Therefore,
    \begin{align*}
        \iso_{I(\delta_1,\dots,\delta_n)}(\alpha) &\leq \lebesgue^n\bigl(N[\cube{\vec x}]\bigr)\leq \prod_{i=1}^t(x_i+\delta_i)=\prod_{i=1}^t\biggl(\delta_i\biggl({\alpha\over \prod_{j=1}^t\delta_j}\biggr)^{1/t}+\delta_i\biggr)\\
                                                  &= \prod_{i=1}^t\delta_i\cdot\prod_{i=1}^t\biggl(\biggl({\alpha\over\prod_{j=1}^t\delta_j}\biggr)^{1/t}+1\biggr) = \biggl(\alpha^{1/t}+\biggl(\prod_{i=1}^t\delta_i\biggr)^{1/t}\biggr)^t\\
                                                  &=\bcube{\delta_1,\dots,\delta_t}(\alpha).
    \end{align*}

    Similarly, if $\alpha>1-\prod_{i=1}^t\delta_i$, then certainly $\iso_{I(\delta_1,\dots,\delta_n)}(\alpha)\leq +\infty=\banticube{\delta_1,\dots,\delta_t}(\alpha)$.
    Otherwise, $\alpha\leq 1-\prod_{i=1}^t\delta_i$; define $x_i=1-\delta_i\bigl({1-\alpha\over\prod_{i=1}^t\delta_i}\bigr)^{1/t}$ for $i\leq t$ and $x_i=0$ for $i>t$.
    Note that for each $i\leq t$, $1\geq 1-x_i=\delta_i\bigl({1-\alpha\over\prod_{i=1}^t\delta_i}\bigr)^{1/t}\geq\delta_i$.
    In particular, the anticuboid $\anticube{\vec x}$ exists and has $\lebesgue^n(\anticube{\vec x})=\alpha$.
    Therefore, since $1-x_i\geq\delta_i$,
    \begin{align*}
        \iso_{I(\delta_1,\dots,\delta_n)}(\alpha) &\leq \lebesgue^n\bigl(N[\anticube{\vec x}]\bigr)= 1-\prod_{i=1}^t(1-x_i-\delta_i)=1-\prod_{i=1}^t\biggl(\delta_i\biggl({1-\alpha\over\prod_{j=1}^t\delta_j}\biggr)^{1/t}-\delta_i\biggr)\\
                                                  &= 1-\prod_{i=1}^t\delta_i\cdot\prod_{i=1}^t\biggl(\biggl({1-\alpha\over\prod_{j=1}^t\delta_j}\biggr)^{1/t}-1\biggr)=1-\biggl((1-\alpha)^{1/t}-\biggl(\prod_{i=1}^t\delta_i\biggr)^{1/t}\biggr)\\
                                                  &=\banticube{\delta_1,\dots,\delta_t}(\alpha).\qedhere
    \end{align*}
\end{proof}

We show next that these particular cuboids/anticuboids are optimal among all cuboids/anticuboids.

\begin{theorem}\label[theorem]{balancedCuboidsAreBest}
    Fix an integer $n\geq 1$ and numbers $0<\delta_1\leq\dots\leq\delta_n<1$.
    For any $\alpha\in(0,1]$ and any $x_1,\dots,x_n\in[0,1]$ with $\prod_{i=1}^n x_i=\alpha$,
    \[
        \ncube{\delta_1,\dots,\delta_n}(x_1,\dots,x_n) \geq\min_{t\in\{0,\dots,n\}}\bcube{\delta_1,\dots,\delta_t}(\alpha).
    \]
\end{theorem}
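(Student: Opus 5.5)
First reduce to efficient cuboids. If $\alpha\geq 1-\delta_1$, then the second part of \Cref{bigSeesAll} gives $\ncube{\delta_1,\dots,\delta_n}(\vec x)=1=\bcube{()}(\alpha)$, which is the $t=0$ term, so there is nothing to prove. Otherwise $\alpha<1-\delta_1$. The first part of \Cref{efficientAreBetter} then lets us replace $\cube{\vec x}$ by an efficient cuboid with the same volume and no larger neighborhood. So we may assume there is some $t\in\{0,\dots,n\}$ with $x_i<1-\delta_i$ for $i\leq t$ and $x_i=1$ for $i>t$. Then $\prod_{i\le t}x_i=\alpha$ and
\[
\ncube{\delta_1,\dots,\delta_n}(\vec x)=\prod_{i=1}^t(x_i+\delta_i).
\]
If $t=0$, then $\alpha=1$ and the claim is trivial, so assume $t\geq1$.

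Next, for this fixed $t$, minimize $\prod_{i\le t}(x_i+\delta_i)$ subject to $\prod_{i\le t}x_i=\alpha$. Write $x_i=\delta_i s_i$, so that $\prod_{i\le t}(x_i+\delta_i)=\bigl(\prod_{i\le t}\delta_i\bigr)\prod_{i\le t}(1+s_i)$. Set $D=\prod_{i\le t}\delta_i$, so that $\prod s_i=\alpha/D$. By AM--GM in the form $\prod(1+s_i)\geq(1+(\prod s_i)^{1/t})^t$ (Hölder/Mahler),
\[
\prod_{i=1}^t(x_i+\delta_i)\geq D\Bigl(1+(\alpha/D)^{1/t}\Bigr)^t=\bigl(\alpha^{1/t}+D^{1/t}\bigr)^t .
\]
The right-hand side is exactly $\bcube{\delta_1,\dots,\delta_t}(\alpha)$, provided $\alpha\leq D/\delta_t$.

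The main obstacle is the case $\alpha>\prod_{i\le t}\delta_i/\delta_t$, where $\bcube{\delta_1,\dots,\delta_t}(\alpha)=+\infty$ and the AM--GM bound does not compare with any finite term. I would handle this by induction on $t$, using the constraint $x_t\leq 1$. The function $\prod(1+s_i)$ is Schur-convex in $\log s_i$, so the constrained minimum over the box $x_i\in[0,1]$ with fixed product pushes the coordinate with largest $\delta_i$ to its cap $x_t=1$. Concretely, if the unconstrained optimum $x_i=\delta_i(\alpha/D)^{1/t}$ violates $x_t\leq1$, then by convexity in $\log x_i$ the constrained optimum has $x_t=1$. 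In that case the product is
\[
(1+\delta_t)\prod_{i<t}(x_i+\delta_i)\geq\prod_{i<t}(x_i+\delta_i),
\]
with $\prod_{i<t}x_i=\alpha$. Equivalently, setting $x_t=1$ gives a cuboid in which coordinate $t$ contributes $\min\{1,x_t+\delta_t\}=1$, so its neighborhood is no larger. We then apply the induction hypothesis in dimension $t-1$ (possibly re-applying \Cref{efficientAreBetter}) to get a bound of at least $\min_{s<t}\bcube{\delta_1,\dots,\delta_s}(\alpha)$.

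Making this last step rigorous is where care is needed. One must check that the minimizer over the compact set $\{\vec x\in[0,1]^t:\prod x_i=\alpha\}$ is either interior, and then equals the AM--GM point, which is feasible exactly when $\alpha\leq D/\delta_t$, or lies on a face where some $x_j=1$. On such a face, the ordering $\delta_1\leq\dots\leq\delta_t$ lets us swap coordinates so that $j=t$, after which we reduce $t$.
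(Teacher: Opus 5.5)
Your argument is correct, but it is organized differently from the paper's proof.

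The paper first takes a global minimizer $\vec x^*$ of $\ncube{\delta_1,\dots,\delta_n}$ over the compact set $\{\vec x\in[0,1]^n:\prod_i x_i=\alpha\}$, and only then applies \Cref{efficientAreBetter} to that minimizer. An efficient minimizer satisfies $0<x_i^*<1-\delta_i$ on its active coordinates, so it is an interior critical point of $\prod_{i\le t}(x_i+\delta_i)$. Lagrange multipliers then force $x_i^*=\delta_i x$. The bound $x_t^*<1-\delta_t<1$ automatically places $\alpha$ in the finite range of $\bcube{\delta_1,\dots,\delta_t}$, so the paper never meets your ``main obstacle.''

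You instead make the \emph{given} cuboid efficient and bound it by AM--GM. This is cleaner and calculus-free in the feasible case. However, you then relax to the box $[0,1]^t$, discarding the constraints $x_i<1-\delta_i$, so you must separately treat the case where the balanced point leaves the box. There you end up using compactness, the same interior-critical-point argument (noting that all $y_i>0$ since $\prod y_i=\alpha>0$), an exchange step, and induction on $t$.

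That route does close. The exchange you assert is justified by
\[
(y_t+\delta_j)(1+\delta_t)-(1+\delta_j)(y_t+\delta_t)=(\delta_t-\delta_j)(y_t-1)\le 0 .
\]
Dropping the factor $1+\delta_t$ and applying the statement in dimension $t-1$ then yields $\min_{s<t}\bcube{\delta_1,\dots,\delta_s}(\alpha)$. So AM--GM does not really spare you the calculus. It is the paper's order of operations (minimize first, then make the minimizer efficient) that removes the infeasible case and gives the shorter proof.

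One correction. In the definition, $\prod_{i=1}^t\frac{\delta_i}{\delta_t}$ means $D/\delta_t^{\,t}$, not $D/\delta_t$. This is exactly the condition that the balanced point $x_i=\delta_i(\alpha/D)^{1/t}$ lies in $[0,1]^t$. Your first claim, stated with $D/\delta_t$, remains true because $D/\delta_t\le D/\delta_t^{\,t}$. However, ``feasible exactly when $\alpha\le D/\delta_t$'' is false for $t\ge 2$. Your final case split is really on feasibility of the balanced point, so the argument is unaffected once the threshold is corrected.
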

\begin{proof}
    Consider the optimization problem which asks to minimize $\ncube{\delta_1,\dots,\delta_n}(x_1,\dots,x_n)$ subject to $x_1,\dots,x_n\in[0,1]$ and $\prod_{i=1}^n x_i=\alpha$.
    Of course, $\ncube{\delta_1,\dots,\delta_n}$ is a continuous function and the set $\{\vec x\in[0,1]^n:\prod_{i=1}^n x_i=\alpha\}$ is compact and so there exists an optimizer $\vec x^*$.
    We can assume $\prod_{i=1}^n x_i < 1-\delta_1$, in order to apply  \Cref{efficientAreBetter},
    for otherwise part 2 of
    \Cref{bigSeesAll}
    implies $\ncube{\delta_1,\dots,\delta_n}(x_1,\dots,x_n)=1$ and the conclusion is immediate.
    So \Cref{efficientAreBetter} means we can select $\vec x^*$ so that the cuboid $\cube{\vec x^*}$ is efficient; let $t\in[n]$ be such that $0<x_i^*<1-\delta_i$ for all $i\leq t$ and $x_i^*=1$ for all $i>t$.
    Define the function $f\colon[0,1]^t\to\R$ by
    \[
        f(x_1,\dots,x_t)=\prod_{i=1}^t(x_i+\delta_i),
    \]
    which is clearly continuously differentiable.
    Furthermore, since $0<x_i^*<1-\delta_i$ for all $i\in[t]$, we find that $f(x_1,\dots,x_t)=\ncube{\delta_1,\dots,\delta_n}(x_1,\dots,x_t,1,\dots,1)$ for all $(x_1,\dots,x_t)$ in an open neighborhood about $(x_1^*,\dots,x_t^*)$.
    In particular, $(x_1^*,\dots,x_t^*)$ must be a \emph{local} optimizer of the problem $\min f(x_1,\dots,x_t)$ subject to $\prod_{i=1}^t x_i=\alpha$.
    Since $f$ is continuously differentiable, we may apply the method of Lagrange multipliers to find that there is some $\lambda\in\R$ for which
    \[
        \prod_{j\neq i}(x_j^*+\delta_j)=\lambda\prod_{j\neq i}x_j^*\qquad\text{for all }i\in[t].
    \]
Multiplying both sides of this equality by $x_i^*(x_i^*+\delta_i)$ then implies that
    \[
        f(x_1^*,\dots,x_t^*)\cdot x_i^*=\lambda\alpha(x_i^*+\delta_i)\qquad\text{for all }i\in[t].
    \]
    In turn, this implies that there is some fixed number $x$ for which $x_i^*=\delta_i x$ for all $i\in[t]$.
    Naturally, $\prod_{i=1}^t x_i^*=\alpha\implies x=\bigl(\alpha/\prod_{i=1}^t\delta_i\bigr)^{1/t}$.

    Putting everything together, we have shown that
    \begin{align*}
        \ncube{\delta_1,\dots,\delta_n}(x_1,\dots,x_n) &\geq \ncube{\delta_1,\dots,\delta_n}(x_1^*,\dots,x_n^*)=f(x_1^*,\dots,x_t^*)\\
                                                       &= \prod_{i=1}^t\biggl(\delta_i\biggl({\alpha\over \prod_{j=1}^t\delta_j}\biggr)^{1/t}+\delta_i\biggr)=\prod_{i=1}^t\delta_i\cdot\prod_{i=1}^t\biggl(\biggl({\alpha\over\prod_{j=1}^t\delta_j}\biggr)^{1/t}+1\biggr)\\
                                                       &= \biggl(\alpha^{1/t}+\biggl(\prod_{i=1}^t\delta_i\biggr)^{1/t}\biggr)^t=\bcube{\delta_1,\dots,\delta_t}(\alpha).\qedhere
    \end{align*}
\end{proof}

In the same vein, we show the following.
\begin{theorem}\label[theorem]{balancedAnticuboidsAreBest}
    Fix an integer $n\geq 1$ and numbers $0<\delta_1\leq\dots\leq\delta_n<1$.
    For any $\alpha\in(0,1]$ and any $x_1,\dots,x_n\in[0,1]$ with $1-\prod_{i=1}^n(1-x_i)=\alpha$,
    \[
        \nanticube{\delta_1,\dots,\delta_n}(x_1,\dots,x_n) \geq\min_{t\in\{0,\dots,n\}}\banticube{\delta_1,\dots,\delta_t}(\alpha).
    \]
\end{theorem}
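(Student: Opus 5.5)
We follow the proof of \Cref{balancedCuboidsAreBest}, with the roles of $x_i$ and $1-x_i$ swapped.

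\textbf{Reduction to interior efficient optimizers.} Consider the problem of minimizing $\nanticube{\delta_1,\dots,\delta_n}(\vec x)$ over the set $\{\vec x\in[0,1]^n:1-\prod_{i=1}^n(1-x_i)=\alpha\}$. This set is compact. The objective is not continuous at coordinates $x_i=0$, because the product in $\nanticube{\cdot}$ ranges only over $i$ with $x_i>0$. Rather than arguing existence directly, we therefore argue about the value.
\begin{itemize}
\item If $\alpha\geq 1-\prod_{i=1}^n\delta_i$: part 1 of \Cref{bigSeesAll} gives $\nanticube{\cdot}(\vec x)=1=\banticube{()}(\alpha)$, so the bound holds with $t=0$.
\item If $\alpha<1-\prod_{i=1}^n\delta_i$: part 2 of \Cref{efficientAreBetter} lets us replace $\anticube{\vec x}$ by an efficient anti-cuboid of the same measure with no larger neighborhood.
\end{itemize}
So it suffices to prove the bound for efficient anti-cuboids, i.e.\ $0<x_i<1-\delta_i$ for $i\leq t$ and $x_i=0$ for $i>t$, for some $t$.

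For each fixed $t$, the efficient configurations with index $t$ form the set
\[
D_t=\Bigl\{(x_1,\dots,x_t):0<x_i<1-\delta_i,\ \prod_{i=1}^t(1-x_i)=1-\alpha\Bigr\}.
\]
On $D_t$ the objective equals $g(x_1,\dots,x_t)=1-\prod_{i=1}^t(1-x_i-\delta_i)$, which is smooth. Minimize $g$ over the closure $\overline{D_t}$, where $0\leq x_i\leq 1-\delta_i$; a minimizer exists since $g$ is continuous on this compact set. Consider the boundary points of $\overline{D_t}$:
\begin{itemize}
\item If some $x_i=1-\delta_i$, then $g=1\geq\banticube{()}(\alpha)$.
\item If some $x_i=0$ with $i\leq t$, then $g$ coincides with the value of an anti-cuboid having fewer nonzero coordinates. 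After reindexing as in \Cref{efficientAreBetter}, which shifts indices down and only decreases the $\delta$'s used, this value is at least that of an efficient anti-cuboid with smaller $t$.
\end{itemize}
Hence, by induction on $t$ (base case $t=0$, value $1$), we may assume the minimizer over $\overline{D_t}$ is interior, i.e.\ lies in $D_t$.

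\textbf{Lagrange step.} At an interior local minimizer of $g$ subject to $h=\prod_{i=1}^t(1-x_i)=1-\alpha$, the gradients are proportional: there is $\lambda$ with
\[
\prod_{j\neq i}(1-x_j-\delta_j)=\lambda\prod_{j\neq i}(1-x_j)\qquad\text{for all }i\leq t.
\]
Multiplying the $i$-th equation by $(1-x_i)(1-x_i-\delta_i)$ gives
\[
P\,(1-x_i)=\lambda(1-\alpha)(1-x_i-\delta_i),
\]
where $P=\prod_{j=1}^t(1-x_j-\delta_j)>0$. Hence $(1-x_i-\delta_i)/(1-x_i)$ is the same for all $i$, so $\delta_i/(1-x_i)$ is constant. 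Writing $1-x_i=\delta_i y$, the constraint $\prod_{i=1}^t(1-x_i)=1-\alpha$ forces $y=\bigl((1-\alpha)/\prod_{i=1}^t\delta_i\bigr)^{1/t}$. Substituting, exactly as in \Cref{balancedUpperBound}, gives
\[
g=1-\Bigl((1-\alpha)^{1/t}-\bigl(\textstyle\prod_{i=1}^t\delta_i\bigr)^{1/t}\Bigr)^t=\banticube{\delta_1,\dots,\delta_t}(\alpha).
\]
Here $\alpha\leq 1-\prod_{i=1}^t\delta_i$ holds because each $1-x_i>\delta_i$, so the finite branch of $\banticube{\cdot}$ applies.

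\textbf{Main obstacle.} The delicate point is the boundary and reduction argument, because the objective jumps when some $x_i$ hits $0$. This is why we fix the support size $t$ and minimize $g$ over the compact closure $\overline{D_t}$, instead of invoking continuity of $\nanticube{\cdot}$ directly. We must also check that the reindexed configuration with smaller support still has $x_i<1-\delta_i$. This holds because replacing $\delta_{t_i}$ by $\delta_i\leq\delta_{t_i}$ only weakens the requirement, so efficiency is preserved and the induction closes.
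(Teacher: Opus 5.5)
Your argument is correct. Its skeleton is the paper's: reduce to efficient anti-cuboids via \Cref{efficientAreBetter}, then run a Lagrange-multiplier computation that forces $1-x_i=\delta_i y$. The difference is in how you obtain a point where Lagrange applies.

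\textbf{The paper's route.} It first shows that the discontinuous function $\nanticube{\delta_1,\dots,\delta_n}$ attains its minimum on the whole constraint set. It does this by splitting according to the support $S$ and using continuity of $f_S$ on the closed sets $X_S$. It then replaces that global minimizer by an efficient one, which is automatically a local minimizer of the smooth formula $1-\prod_{i\le t}(1-x_i-\delta_i)$.

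\textbf{Your route.} You reduce to efficient configurations first. Then, for each support size $t$, you minimize the continuous formula $g$ over the compact set $\overline{D_t}$. Boundary points are handled either directly (value $1$) or by induction on $t$.

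\textbf{What each buys.} Your version never needs a global minimizer of a discontinuous function. It also explicitly treats the case $\alpha\geq 1-\prod_i\delta_i$ via \Cref{bigSeesAll}, which must be excluded before \Cref{efficientAreBetter} can be invoked; the paper's proof leaves this implicit. The price is the induction on $t$ and the boundary bookkeeping.

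\textbf{Two small corrections.}
\begin{enumerate}
\item At a boundary point of $\overline{D_t}$ with $x_i=0$, the value of $g$ does not coincide with the value of the smaller-support anti-cuboid. This is because $g$ still carries the factor $1-\delta_i<1$ for that coordinate. What holds is that $g$ is at least that value, and this is exactly the direction your induction needs.
\item The base case $t=0$ is vacuous rather than having value $1$: the only index-$0$ anti-cuboid is empty, with neighborhood measure $0$, and it is excluded because $\alpha>0$. For $t=1$, the point $x_1=0$ never lies in $\overline{D_1}$, so the induction starts cleanly.
\end{enumerate}
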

\begin{proof}
    Consider the optimization problem which asks to minimize $\nanticube{\delta_1,\dots,\delta_n}$ over the set $X\eqdef\{\vec x\in[0,1]^n:1-\prod_{i=1}^n(1-x_i)=\alpha\}$.
    While $\nanticube{\delta_1, \dots, \delta_n}$ is not continuous on $[0,1]^n$,
    we nevertheless begin by demonstrating that a minimizer to this problem does indeed exist.

    For $S\subseteq[n]$ define the function and set
    \[
        f_S(x_1,\dots,x_n)\eqdef 1-\prod_{i\in S}\max\{0,1-x_i-\delta_i\},\quad X_S\eqdef\biggl\{\vec x\in[0,1]^n:1-\prod_{i=1}^n(1-x_i)=\alpha\text{ and }x_i=0\text{ if }i\notin S\biggr\}.
    \]

    Firstly, for any $S\subseteq[n]$ and any $\vec x\in X_S$, it is easy to see that $\vec x\in X$ and that $f_S(\vec x)\leq\nanticube{\delta_1,\dots,\delta_n}(\vec x)$.
    On the other hand, if $\vec x\in X$, then with $S=\{i\in[n]:x_i>0\}$, we find that $\vec x\in X_S$ and that $\nanticube{\delta_1,\dots,\delta_n}(\vec x)=f_S(\vec x)$.

    Thus, since each $f_S$ is continuous and each $X_S$ is closed, we find that there is some $\vec x^*$ which minimizes $\nanticube{\delta_1,\dots,\delta_n}(\vec x)$ subject to $\vec x\in X$.
    We may furthermore apply \Cref{efficientAreBetter} to see that we may select $\vec x^*$ so that the anti-cuboid $\anticube{\vec x^*}$ is efficient.
    From here, the proof is identical to the proof of \Cref{balancedCuboidsAreBest}.
\end{proof}

Define the function $\qiso{\delta_1,\dots,\delta_n}\colon[0,1]\to[0,1]$ by
\[
    \qiso{\delta_1,\dots,\delta_n}(x) \eqdef \begin{cases}
        0, & \text{if }x=0,\\
        \min_{t\in\{0,\dots,n\}}\min\bigl\{\bcube{\delta_1,\dots,\delta_t}(x),\ \banticube{\delta_1,\dots,\delta_t}(x)\bigr\}, & \text{otherwise}.
    \end{cases}
\]

That is to say, $\iso{\delta_1, \dots, \delta_n}$ computes the smalleest possible neighborhood among
We seek to prove that $\iso_{I(\delta_1,\dots,\delta_n)}=\qiso{\delta_1,\dots,\delta_n}$ provided that $\delta_1\leq\dots\leq\delta_n$, which we will do in the next section.
The main piece of the argument here hinges on \Cref{isotightClassification,optimalStep}, so we must verify that $\qiso{\delta_1,\dots,\delta_n}$ satisfies the hypotheses of those theorems.

\begin{theorem}\label[theorem]{qisoIsNice}
    Fix an integer $n\geq 1$ and numbers $0<\delta_1\leq\dots\leq\delta_n<1$.
    The following hold:
    \begin{enumerate}
        \item $\qiso{\delta_1,\dots,\delta_n}$ is continuous on $(0,1]$ and concave on $[0,1]$.
        \item Fix a non-degenerate interval $I\subseteq[0,1]$.
            If $\qiso{\delta_1,\dots,\delta_n}$ is affine on $I$, then either
            \begin{itemize}
                \item $I\subseteq\bigl[1-\prod_{i=1}^n\delta_i,\ 1\bigr]$ and $\qiso{\delta_1,\dots,\delta_n}(x)=1$ for all $x\in I$, or
                \item $I\subseteq\bigl(0,1-\delta_1\bigr]$ and $\qiso{\delta_1,\dots,\delta_n}(x)=x+\delta_1$ for all $x\in I$.
            \end{itemize}
    \end{enumerate}
\end{theorem}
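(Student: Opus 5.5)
The plan is to show that near every point of $(0,1)$, $\qiso{\delta_1,\dots,\delta_n}$ is locally the minimum of finitely many \emph{finite} concave functions, and then to get concavity from local concavity together with continuity.

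\textbf{Step 1: each finite piece is concave.} Each finite branch is concave on its natural domain:
\begin{itemize}
\item $\bcube{\delta_1,\dots,\delta_t}(x)=(x^{1/t}+c_t)^t$ with $c_t=(\prod_{i\le t}\delta_i)^{1/t}$ is concave on $[0,\prod_{i<t}\delta_i]$;
\item $\banticube{\delta_1,\dots,\delta_t}(x)=1-((1-x)^{1/t}-c_t)^t$ is concave on $[0,1-\prod_{i\le t}\delta_i]$;
\item the $t=0$ branch is the constant $1$.
\end{itemize}
For $t\ge 2$ both power functions are \emph{strictly} concave, because $c_t>0$; this is a routine second-derivative check. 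For $t=1$ both reduce to the affine function $x+\delta_1$. The only danger to concavity of the minimum is the jump to $+\infty$ at the right end of a domain, so the core of part 1 is to show that no branch is the unique minimizer at its own right endpoint.

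\textbf{Step 2: endpoint analysis.}
\begin{itemize}
\item For anti-cuboids, at $x=1-\prod_{i\le t}\delta_i$ the value is exactly $1$. This ties with the constant branch, which continues to the right, so nothing breaks.
\item For cuboids, at $x=\prod_{i<t}\delta_i$ the balanced cuboid from \Cref{balancedUpperBound} has $x_t=1$. So it is really the cuboid $\cube{x_1,\dots,x_{t-1},1,\dots,1}$, whose neighborhood has measure $\prod_{i<t}(x_i+\delta_i)$. This is strictly less than $\prod_{i\le t}(x_i+\delta_i)=\bcube{\delta_1,\dots,\delta_t}(x)$, since the factor $x_t+\delta_t=1+\delta_t>1$ has been dropped.
\item By \Cref{balancedCuboidsAreBest}, applied in the first $t-1$ coordinates, that smaller quantity is at least $\min_{s<t}\bcube{\delta_1,\dots,\delta_s}(x)$. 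Hence some branch with smaller index is strictly below branch $t$ at its endpoint, and by continuity also in a neighborhood of it.
\end{itemize}

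\textbf{Step 3: concavity and continuity.} From Step 2, every $x_0\in(0,1)$ has a neighborhood on which $\qiso{\delta_1,\dots,\delta_n}$ equals the minimum of those branches that are finite on the whole neighborhood. Hence it is continuous and locally concave on $(0,1)$, and therefore concave on $(0,1)$. At $x=1$ only the branches $1$ and, possibly, anti-cuboids ending at $1$ survive, and these are continuous from the left, which gives continuity on $(0,1]$. Setting the value at $0$ to be $0$, which is at most the right limit, preserves concavity on $[0,1]$.

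\textbf{Part 2 (affine intervals).} Suppose $\qiso{\delta_1,\dots,\delta_n}$ is affine on a non-degenerate interval $I$.
\begin{itemize}
\item By the local finite-minimum representation and analyticity of the branches, on some subinterval of $I$ the function coincides with a single branch.
\item That branch cannot be strictly concave, so it is either $1$ or $x+\delta_1$. Note that $\bcube{\delta_1}$ and $\banticube{\delta_1}$ are both $x+\delta_1$.
\item Two distinct affine functions cannot both agree with one affine function on sub-intervals, and a concave function agreeing with an affine one on a sub-interval of an interval where it is affine must agree on all of it. So the same formula holds on all of $I$.
\end{itemize}
For the domain claims:
\begin{itemize}
\item If $\qiso{\delta_1,\dots,\delta_n}=1$ on $I$, then $I\subseteq[1-\prod_i\delta_i,1]$. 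Indeed, for $x<1-\prod_i\delta_i$ we have $\banticube{\delta_1,\dots,\delta_n}(x)<1$.
\item If $\qiso{\delta_1,\dots,\delta_n}=x+\delta_1$ on $I$, then $I\subseteq(0,1-\delta_1]$. Indeed, the value must be at most $1$, and $0$ is excluded because the value there is $0\ne\delta_1$.
\end{itemize}

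The main obstacle is the endpoint analysis in Step 2, and in particular making the strict inequality against lower-index branches rigorous. The strictness comes from the factor $1+\delta_t$ dropped at $x_t=1$, combined with \Cref{balancedCuboidsAreBest}.
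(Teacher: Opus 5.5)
Your plan follows the paper's proof closely. The ingredients are the same: concave branches; strict domination of each cuboid branch at the right end of its domain, via \Cref{balancedCuboidsAreBest} and the dropped factor $1+\delta_t$ (this is \Cref{hypothesisCheck}); a tie with the constant branch at the right end of each anti-cuboid branch; and a local-to-global concavity argument. Your part 2, including the explicit domain checks, is fine and more detailed than the paper's.

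The gap is the passage from Step 2 to Step 3. A tie at an endpoint does not let you discard that branch near the endpoint. So your claim that every $x_0\in(0,1)$ has a neighborhood on which $\qiso{\delta_1,\dots,\delta_n}$ is the minimum of the branches finite on the whole neighborhood is false at anti-cuboid endpoints. Concretely, take $n=2$, $\delta_1=\delta_2=\delta$ and $x_0=1-\delta^2$. The branches finite on a full neighborhood of $x_0$ are $1$, $x+\delta$ and $(\sqrt x+\delta)^2$, and their minimum is $1$ near $x_0$. But $\qiso{\delta,\delta}(x)=1-(\sqrt{1-x}-\delta)^2<1$ for $x$ slightly below $x_0$. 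So your argument does not establish local concavity at such points. Only the cuboid endpoints, where you do have strict domination, behave as you describe.

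The fix is short. Write $e_t=1-\prod_{i\le t}\delta_i$ and $a_t(x)=1-\bigl((1-x)^{1/t}-(\prod_{i\le t}\delta_i)^{1/t}\bigr)^t$. This $a_t$ is increasing on $[0,e_t]$ with $a_t(e_t)=1$. Hence $\min\{\banticube{\delta_1,\dots,\delta_t},1\}$ coincides with $x\mapsto a_t(\min\{x,e_t\})$, which is finite, continuous and concave on all of $[0,1]$. Replacing each anti-cuboid branch by this capped extension makes your local finite-minimum argument valid everywhere.

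The paper avoids the issue differently. \Cref{minWithDifferentDomains} explicitly allows a tie with a branch of larger domain. Its stitching lemma, \Cref{concaveSteps}, only needs a concave majorant touching the minimum at each junction, not strict domination.

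Two smaller slips:
\begin{itemize}
\item The domain of $\bcube{\delta_1,\dots,\delta_t}$ is $[0,\prod_{i=1}^t(\delta_i/\delta_t)]$, not $[0,\prod_{i<t}\delta_i]$. At your stated endpoint the balanced cuboid has $x_t=\delta_t^{1-1/t}\neq 1$ for $t\geq 2$. Your argument is correct at the true endpoint.
\item At $x=1$ no anti-cuboid branch with $t\geq 1$ is finite. The cuboid branches with $\delta_1=\dots=\delta_t$ are finite there, with values above $1$. This is harmless, since $\qiso{\delta_1,\dots,\delta_n}\equiv 1$ on $(1-\prod_i\delta_i,1]$.
\end{itemize}
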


In order to prove this, we will need to generalize two well-known facts: the minimum of continuous functions is continuous, and the minimum of concave functions is concave.
\begin{prop}\label[prop]{minWithDifferentDomains}
    Let $I_1,\dots,I_n\subseteq\R$ be non-degenerate, closed intervals with $I_1\subseteq\dots\subseteq I_n$ and suppose that $f_i\colon I_i\to\R$ are functions.
    For each $i\in[n]$, define the function $\breve f_i\colon I_n\to\R\cup\{+\infty\}$ by
    \[
        \breve f_i(x)=\begin{cases}
            f_i(x), & \text{if }x\in I_i,\\
            +\infty, & \text{otherwise}.
        \end{cases}
    \]
    Suppose the following property holds for every $i\in[n]$:
    \begin{equation}\label{cond:smaller}
        x_i\in \partial I_i\setminus\partial I_n\qquad\implies\qquad
                f_i(x_i)>\min_{j\in[n]}\breve f_j(x_i)
        \quad\text{or}\quad f_i(x_i)\geq\min_{j\in\{i+1,\dots,n\}}f_j(x_i).
    \end{equation}
    Then the following hold:
    \begin{enumerate}
        \item Fix any $x\in I_n$. If $f_i$ is continuous at $x$ for each $i\in[n]$ satisfying $x\in I_i$, then $\min_{i\in[n]}\breve f_i$ is also continuous at $x$.
        \item If $f_i$ is concave on $I_i$ for each $i\in[n]$, then $\min_{i\in[n]}\breve f_i$ is concave on $I_n$.
    \end{enumerate}
\end{prop}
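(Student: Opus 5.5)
We write $g\eqdef\min_{i\in[n]}\breve f_i$. The plan is to first extract from \eqref{cond:smaller} a \emph{good minimizer} at every point, and then handle the two items separately, continuity by semicontinuity and concavity by a local-to-global argument.

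\emph{Good minimizer.} For $x\in I_n$, let $k$ be the \emph{largest} index with $x\in I_k$ and $f_k(x)=g(x)$. We claim $x\notin\partial I_k\setminus\partial I_n$. Suppose otherwise. The first alternative of \eqref{cond:smaller} says $f_k(x)>g(x)$, which is false. The second alternative gives $j>k$ with $f_j(x)\le f_k(x)=g(x)$, so $f_j(x)=g(x)$, contradicting maximality of $k$. Hence $I_k$ contains a relative neighborhood of $x$ in $I_n$. This holds both when $x$ is interior to $I_k$ and when $x\in\partial I_n$, since a non-degenerate closed $I_k\subseteq I_n$ containing an endpoint of $I_n$ contains a one-sided neighborhood of it.

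\emph{Continuity (item 1).} For upper semicontinuity, $g\le f_k$ on a relative neighborhood of $x$, and $f_k$ is continuous at $x$, so $\limsup_{y\to x}g(y)\le f_k(x)=g(x)$. For lower semicontinuity, take $y_m\to x$. Pass to a subsequence on which $g(y_m)=f_j(y_m)$ for one fixed $j$, which is possible because there are finitely many indices. Then $y_m\in I_j$, and $I_j$ is closed, so $x\in I_j$. By hypothesis $f_j$ is continuous at $x$, so $\liminf g(y_m)=f_j(x)\ge g(x)$.

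\emph{Concavity (item 2).} I would first show that $g$ is \emph{locally} concave at every $b\in I_n$. Fix $b$ and its good minimizer $k$, so $f_k$ is concave on a relative neighborhood $U$ of $b$. Split the other indices $j$ into three groups.
\begin{itemize}
    \item If $b\notin I_j$, then $I_j$ is closed, so $U$ can be shrunk to avoid $I_j$.
    \item If $b$ is interior to $I_j$, then $U$ can be shrunk to lie inside $I_j$.
    \item If $b\in\partial I_j\setminus\partial I_n$, there are two sub-cases. When $f_j(b)>g(b)$, concavity of $f_j$ gives $\liminf_{y\to b}f_j(y)\ge f_j(b)>f_k(b)$; since $f_k$ is continuous at $b$ (it is concave with $b$ in the interior of its domain), $f_j$ exceeds $f_k$ near $b$ and so does not affect the minimum there. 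When $f_j(b)=g(b)$, I would use \eqref{cond:smaller} to pass to an index $j'>j$ with $f_{j'}(b)=g(b)$ and iterate. This terminates at an index whose domain is a neighborhood of $b$, and it shows that $f_j$ is only ever active on one side of $b$, where it is compared with a function defined on both sides.
\end{itemize}
After these reductions, on a small $U$ we have that $g$ is the minimum of finitely many functions each concave on the whole of $U$, and is therefore concave on $U$.

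Then I would pass from local to global. On the interior of $I_n$, $g$ is continuous by the same semicontinuity argument as in item 1, applied now to concave functions, which are continuous on the interiors of their domains. A continuous, locally concave function on an open interval is concave: if a chord inequality failed on $[a,c]$, take the point maximizing the distance of the chord above $g$; local concavity near that point gives a contradiction. At the endpoints of $I_n$ we only need $g(\text{end})\le\lim g$, which holds because $g\le f_k$ near the endpoint and $f_k$ is concave there.

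The main obstacle I expect is the third group, boundary indices $j$ with $f_j(b)=g(b)$: there the concave $f_j$ lives on only one side of $b$ and could a priori create a non-concave kink. The good-minimizer construction, iterated through \eqref{cond:smaller}, is the tool I would try first to rule this out.
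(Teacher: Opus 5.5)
Your good-minimizer observation is exactly the paper's \eqref{eqn:interior}, and your semicontinuity proof of item 1 is correct (and cleaner than the paper's one-sided argument). The gap is in item 2, at precisely the case you flagged: $b\in\partial I_j\setminus\partial I_n$ with $f_j(b)=g(b)$. Iterating \eqref{cond:smaller} only produces an index whose domain has $b$ in its interior, and you already have one in your good minimizer $k$. It gives no way to discard $f_j$, and $f_j$ can be the only active function on its side of $b$. For example, $I_1=[0,1]$, $I_2=[0,2]$, $f_1(x)=x$, $f_2\equiv 1$ satisfy \eqref{cond:smaller}. At $b=1$ we have $g=f_1<f_2$ on $[1-\epsilon,1)$, while $f_1$ is undefined to the right of $b$. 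So your claim that, after shrinking, $g$ is on $U$ a minimum of finitely many functions each concave on all of $U$ is false. ``Therefore concave on $U$'' does not follow: nothing you have written excludes a convex kink at $b$.

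What is missing is a slope comparison against a two-sided concave majorant. Let $G\eqdef\min\{f_i : b\in I_i^\circ\}$. It is concave on a neighborhood $U$ of $b$, satisfies $g\le G$ on $U$, and has $G(b)=g(b)$ by your good-minimizer property. Also, $g$ is concave on $U\cap(-\infty,b]$ and on $U\cap[b,\infty)$ separately. This is because on each closed half, $g$ is the minimum of those $f_i$ defined on the whole half; at $b$ itself the minimum is attained by $f_k$. Hence for $x<b<z$ in $U$,
\[
\frac{g(b)-g(x)}{b-x}\ \ge\ \frac{G(b)-G(x)}{b-x}\ \ge\ \frac{G(z)-G(b)}{z-b}\ \ge\ \frac{g(z)-g(b)}{z-b}.
\]
The chord slope over any $[y,z]\ni b$ is a weighted average of the slopes over $[y,b]$ and $[b,z]$, so this chain together with the one-sided concavity gives concavity on $U$. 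This is exactly the paper's stitching lemma \Cref{concaveSteps}. The paper applies it directly at the finitely many endpoints $a_i$ of the $I_j$, with $H_i=\min_{j\in T_i}f_j$, so no separate local-to-global step is needed.

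A smaller point: at an endpoint of $I_n$ you need $\liminf g\ge g(\text{end})$, and ``$g\le f_k$'' points the wrong way. The right reason is that a concave function is lower semicontinuous at the endpoints of its domain. The same fact, not continuity, is also what the lower-semicontinuity half of your continuity claim on $I_n^\circ$ needs when $x\in\partial I_j$.
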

The proof is not too difficult, but is somewhat lengthy.
We therefore relegate it to \Cref{sec:minWithDifferentDomains} and instead move on to proving \Cref{qisoIsNice}.
Naturally, we will apply \Cref{minWithDifferentDomains} to the functions $\bcube{\vec\delta}$ and $\banticube{\vec\delta}$, and so we begin by verifying part of the hypothesis.

\begin{lemma}\label[lemma]{hypothesisCheck}
    Fix an integer $n\geq 1$ and numbers $\delta_1\leq\dots\leq\delta_n\in(0,1)$.
    With $x=\prod_{i=1}^n{\delta_i\over\delta_n}$,
    \[
        \bcube{\delta_1,\dots,\delta_n}(x)>\min_{t\in\{0,\dots,n\}}\bcube{\delta_1,\dots,\delta_t}(x).
    \]
\end{lemma}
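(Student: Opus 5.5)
The idea is to exhibit an explicit cuboid in dimension $n-1$ whose neighborhood is strictly smaller than $\bcube{\delta_1,\dots,\delta_n}(x)$. Then \Cref{balancedCuboidsAreBest}, applied in dimension $n-1$, bounds that cuboid from below by $\min_{t\le n-1}\bcube{\delta_1,\dots,\delta_t}(x)$.

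First I evaluate the left-hand side. Set $x=\prod_{i=1}^n{\delta_i\over\delta_n}$, so that $x=\prod_{i=1}^{n}\delta_i\cdot\delta_n^{-n}$. This gives $x^{1/n}=\bigl(\prod_i\delta_i\bigr)^{1/n}/\delta_n$, and hence
\[
\bcube{\delta_1,\dots,\delta_n}(x)=\prod_{i=1}^n\delta_i\cdot\Bigl(\frac1{\delta_n}+1\Bigr)^n=\prod_{i=1}^n\Bigl(\frac{\delta_i}{\delta_n}+\delta_i\Bigr)=(1+\delta_n)\prod_{i=1}^{n-1}\Bigl(\frac{\delta_i}{\delta_n}+\delta_i\Bigr).
\]
This is exactly the computation in \Cref{balancedUpperBound} with side lengths $x_i=\delta_i/\delta_n$, and in particular the last side there is $x_n=1$.

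Next, the case $n=1$ is immediate. There $x=1$ and $\bcube{\delta_1}(1)=1+\delta_1>1=\bcube{()}(1)$.

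For $n\ge2$, consider the $(n-1)$-tuple $y_i=\delta_i/\delta_n\in(0,1]$ for $i<n$, which satisfies $\prod_{i<n}y_i=x$. Since every factor $\min\{1,y_i+\delta_i\}$ is at most $y_i+\delta_i$ and $1+\delta_n>1$, the display above gives
\[
\bcube{\delta_1,\dots,\delta_n}(x)>\prod_{i=1}^{n-1}\min\{1,y_i+\delta_i\}=\ncube{\delta_1,\dots,\delta_{n-1}}(y_1,\dots,y_{n-1}).
\]
The inequality is strict because $\prod_{i<n}(y_i+\delta_i)>0$. Now apply \Cref{balancedCuboidsAreBest} with $n-1$ in place of $n$. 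Its hypotheses hold: $0<\delta_1\le\dots\le\delta_{n-1}<1$, $x\in(0,1]$, and $\prod y_i=x$. It yields
\[
\ncube{\delta_1,\dots,\delta_{n-1}}(\vec y)\ge\min_{t\in\{0,\dots,n-1\}}\bcube{\delta_1,\dots,\delta_t}(x)\ge\min_{t\in\{0,\dots,n\}}\bcube{\delta_1,\dots,\delta_t}(x),
\]
which proves the claim.

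The only real point to check is the identification of $\bcube{\delta_1,\dots,\delta_n}(x)$ at this boundary value as a product carrying the factor $1+\delta_n$. That factor is what makes the inequality strict, while the lower-dimensional optimality is handled entirely by \Cref{balancedCuboidsAreBest}.
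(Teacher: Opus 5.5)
Your proof is correct and is essentially the paper's argument. You use the same cuboid with side lengths $\delta_i/\delta_n$, whose last side is $1$; truncating the factor $1+\delta_n$ to $1$ gives the strict inequality, and \Cref{balancedCuboidsAreBest} finishes. The only cosmetic difference is that you drop the unit coordinate and invoke \Cref{balancedCuboidsAreBest} in dimension $n-1$, which forces a separate $n=1$ case, whereas the paper applies it directly in dimension $n$ to $(\delta_1/\delta_n,\dots,\delta_n/\delta_n)$ and needs no case split.
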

\begin{proof}
    For each $i\in[n]$, set $x_i=\delta_i/\delta_n$, so $0\leq x_i\leq 1$ for each $i\in[n]$ since $\delta_1\leq\dots\leq\delta_n$.
    Note that $\prod_{i=1}^n x_i=x$ by construction.
    Also $x_i=\delta_i/\delta_n=\delta_i\bigl(x/\prod_{j=1}^n\delta_j\bigr)^{1/n}$, and so, since $x_n+\delta_n=1+\delta_n>1$, we bound
    \begin{align*}
        \ncube{\delta_1,\dots,\delta_n}(x_1,\dots,x_n) &= \prod_{i=1}^n\min\bigl\{1,x_i+\delta_i\bigr\}<\prod_{i=1}^n(x_i+\delta_i)=\prod_{i=1}^n\biggl(\delta_i\biggl({x\over\prod_{j=1}^n\delta_j}\biggr)^{1/n}+\delta_i\biggr)\\
                                                       &= \biggl(\prod_{i=1}^n\delta_i\biggr)\cdot\biggl(\biggl({x\over\prod_{j=1}^n\delta_j}\biggr)^{1/n}+1\biggr)^n=\biggl(x^{1/n}+\biggl(\prod_{i=1}^n\delta_i\biggr)^{1/n}\biggr)^n\\
                                                       &=\bcube{\delta_1,\dots,\delta_n}(x).
    \end{align*}
    Combining this inequality with \Cref{balancedCuboidsAreBest} then yields
    \[
        \bcube{\delta_1,\dots,\delta_n}(x)>\ncube{\delta_1,\dots,\delta_n}(x_1,\dots,x_n)\geq\min_{t\in\{0,\dots,n\}}\bcube{\delta_1,\dots,\delta_n}(x).\qedhere
    \]
\end{proof}

We can now prove \Cref{qisoIsNice}.
\begin{proof}[Proof of \Cref{qisoIsNice}]
    We begin by defining some functions and intervals.
    For $t\in[n]$, define
    \[
        I_t=\biggl[0,\prod_{i=1}^t{\delta_i\over\delta_t}\biggr],\qquad c_t(x)=\biggl(x^{1/t}+\biggl(\prod_{i=1}^t\delta_i\biggr)^{1/t}\biggr)^t
    \]
    \[
        J_t=\biggl[0,1-\prod_{i=1}^t\delta_i\biggr],\qquad a_t(x)=1-\biggl((1-x)^{1/t}-\biggl(\prod_{i=1}^t\delta_i\biggr)^{1/t}\biggr)^t.
    \]
    Additionally define $I_0=J_0=[0,1]$ and $c_0(x)=a_0(x)=1$.
    Note that $c_t\colon I_t\to\R$ and $a_t\colon J_t\to\R$ are continuous and concave for all $t\in\{0,\dots,n\}$ (the concavity can be seen by checking the second derivative).
    Additionally, using the notation from \Cref{minWithDifferentDomains}, we see that $\breve c_t=\bcube{\delta_1,\dots,\delta_t}$ and $\breve a_t=\banticube{\delta_1,\dots,\delta_t}$ for each $t\in\{0,\dots,n\}$.
    Finally, note that the $I_t$'s are nested, as are the $J_t$'s.

    Define the functions $c,a\colon[0,1]\to\R$ by $c=\min_{t\in\{0,\dots,n\}}\breve c_t$ and $a=\min_{t\in\{0,\dots,n\}}\breve a_t$.
    \Cref{hypothesisCheck} shows that the hypothesis of \Cref{minWithDifferentDomains} is satisfied for the functions $c_0,\dots,c_n$ and so $c$ is continuous and concave on $[0,1]$.
    The hypotheses of \Cref{minWithDifferentDomains} are also satisfied for the functions $a_0,\dots,a_n$ since if $x=1-\prod_{i=1}^t\delta_i$, then $a_t(x)=1=a_0(x)$; thus $a$ is continuous and concave on $[0,1]$ as well.

    Since $\qiso{\delta_1,\dots,\delta_n}(0)=0$ and $\qiso{\delta_1,\dots,\delta_n}(x)=\min\{c(x),\ a(x)\}$ otherwise, this verifies that $\qiso{\delta_1,\dots,\delta_n}$ is continuous on $(0,1]$ and concave on $[0,1]$.
    \medskip

    Now, fix a non-degenerate interval $I\subseteq[0,1]$ and suppose that $\qiso{\delta_1,\dots,\delta_n}$ is affine on $I$.
    Note that for each $t\geq 2$, the functions $c_t$ and $a_t$ are strictly concave on $I_t$ and $J_t$, respectively (this can be seen by checking the second derivative).
    In particular, if $\qiso{\delta_1,\dots,\delta_n}$ is affine on $I$, then it must be the case that $\qiso{\delta_1,\dots,\delta_n}\in\{c_0,c_1,a_0,a_1\}$ when restricted to $I$.
    Since $c_0(x)=a_0(x)=1$ and $c_1(x)=a_1(x)=x+\delta_1$, the claim follows.
\end{proof}

We will rely on the continuity and concavity of $\qiso{\delta_1,\dots,\delta_n}$ directly in the next section.
The observations about when $\qiso{\delta_1,\dots,\delta_n}$ is affine will be used only to derive the following fact related to \Cref{optimalStep}:
\begin{corollary}\label[corollary]{notAffine}
    Fix numbers $0<\xi<\gamma<1$, $\alpha\in[0,1]$ and $0<\delta_1\leq\dots\leq\delta_n<1$.
    Set $Q=\qiso{\delta_1,\dots,\delta_n}$ and fix a non-degenerate interval $I$.
    If
    \[
        (\gamma-\xi)Q(t)+(1-\gamma+\xi)Q\biggl({\alpha-(1-\xi)t\over\xi}\biggr)=Q(\alpha)
    \]
    for all $t\in I$, then $Q(t)=1$ for all $t\in I$.
\end{corollary}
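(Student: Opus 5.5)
The plan is to show that $Q$ is affine both on $I$ and on the image of $I$ under the substitution, and then to use the classification of affine pieces from part 2 of \Cref{qisoIsNice} together with a slope comparison.

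Write $g(t)={\alpha-(1-\xi)t\over\xi}$. This is an affine map with slope $-(1-\xi)/\xi\neq 0$, so $g(I)$ is again a non-degenerate interval. I take $I$ to be such that $I$ and $g(I)$ both lie in $[0,1]$, which is implicit in the hypothesis since $Q$ is only defined there. Set $F(t)=(\gamma-\xi)Q(t)+(1-\gamma+\xi)Q(g(t))$.

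\emph{Step 1: both pieces are affine.} By \Cref{qisoIsNice}, $Q$ is concave on $[0,1]$. Hence $t\mapsto(\gamma-\xi)Q(t)$ is concave on $I$, because $\gamma-\xi>0$. Likewise $t\mapsto(1-\gamma+\xi)Q(g(t))$ is concave on $I$, because it is a concave function composed with an affine map and $1-\gamma+\xi>0$. Their sum is constant on $I$ by hypothesis. Therefore $(\gamma-\xi)Q=Q(\alpha)-(1-\gamma+\xi)Q\circ g$ on $I$, and the right-hand side is convex. So $Q$ is both concave and convex on $I$, i.e.\ affine on $I$. The same argument shows $Q\circ g$ is affine on $I$, so $Q$ is affine on $g(I)$.

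\emph{Step 2: classify the pieces.} By part 2 of \Cref{qisoIsNice}, on each of the non-degenerate intervals $I$ and $g(I)$, $Q$ is either identically $1$ (slope $0$) or equal to $x+\delta_1$ (slope $1$). Let $s_1,s_2\in\{0,1\}$ denote these slopes. Differentiating $F$ on the interior of $I$, constancy of $F$ forces
\[
(\gamma-\xi)s_1-(1-\gamma+\xi){1-\xi\over\xi}s_2=0 .
\]
I then run through the four cases.
\begin{itemize}
\item If $s_1=1$ and $s_2=0$, the left-hand side is $\gamma-\xi>0$, a contradiction.
\item If $s_1=0$ and $s_2=1$, the left-hand side is strictly negative, again a contradiction.
\item If $s_1=s_2=1$, we need $(\gamma-\xi)\xi=(1-\gamma+\xi)(1-\xi)$. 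Expanding both sides, this simplifies to $\gamma=1$, which contradicts $\gamma<1$.
\end{itemize}
Hence $s_1=s_2=0$, and in particular $Q(t)=1$ for all $t\in I$.

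The only delicate point is Step 1, namely passing from "a sum of two concave functions is constant" to "each summand is affine". This needs the two weights $\gamma-\xi$ and $1-\gamma+\xi$ to be strictly positive, which holds because $0<\xi<\gamma<1$. It also needs $g$ to have nonzero slope, so that $g(I)$ is non-degenerate and part 2 of \Cref{qisoIsNice} applies to it. Everything after that is the short slope computation above.
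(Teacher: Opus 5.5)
Your proof is correct and follows the same route as the paper: concavity of $Q$ together with the positive weights forces both $Q$ on $I$ and $Q$ on $g(I)$ to be affine, part 2 of \Cref{qisoIsNice} restricts the slopes to $0$ or $1$, and the case where both slopes are $1$ is ruled out by the identity $(\gamma-\xi)\xi-(1-\gamma+\xi)(1-\xi)=\gamma-1\neq 0$. Your four-case slope analysis is slightly more careful than the paper's proof. The paper tacitly assumes that $Q(t)=t+\delta_1$ on $I$ forces $Q(x)=x+\delta_1$ on $g(I)$, and never addresses the mixed cases, which you dispose of explicitly using the signs of the slope terms.
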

\begin{proof}
    Since $Q$ is concave on $[0,1]$ (\Cref{qisoIsNice}), we know that both $Q(t)$ and $Q\bigl({\alpha-(1-\xi)t\over\xi}\bigr)$ must be concave on $I$.
    Since $\gamma-\xi>0$ and $1-\gamma+\xi>0$ by assumption, this means that both $Q(t)$ and $Q\bigl({\alpha-(1-\xi)t\over\xi}\bigr)$ must, in fact, be affine on $I$.
    Thus, due to \Cref{qisoIsNice}, we know that either $Q(t)=1$ for all $t\in I$ or $Q(t)=t+\delta_1$ for all $t\in I$.
    Suppose for the sake of contradiction that $Q(t)=t+\delta_1$, in which case $Q\bigl({\alpha-(1-\xi)t\over\xi}\bigr)={\alpha-(1-\xi)t\over\xi}+\delta_1$.
    Therefore,
    \begin{align*}
        Q(\alpha) &=  (\gamma-\xi)\bigl(t+\delta_1\bigr)+(1-\gamma+\xi)\biggl({\alpha-(1-\xi)t\over\xi}+\delta_1\biggr)\\
                  &= {\gamma-1\over\xi}t+\biggl(\alpha+\delta_1+{\alpha(1-\gamma)\over\xi}\biggr),
    \end{align*}
    for all $t\in I$.
    However, since $I$ is non-degenerate, the linear term must vanish and so ${\gamma-1\over\xi}=0\implies\gamma=1$; a contradiction.
\end{proof}

\section{The isoperimetric inequality}

This section is dedicated to the proof of the following theorem.
\begin{theorem}\label[theorem]{qualitativeIsoperimetric}
    Fix an integer $n\geq 1$ and numbers $\delta_1,\dots,\delta_n\in(0,1)$.
    \begin{enumerate}
        \item $I(\delta_1,\dots,\delta_n)$ is iso-tight, i.e.\ $\Iso_{I(\delta_1, \dots, \delta_n)}(\alpha) \neq \varnothing$ for every $\alpha \in (0,1]$, and
        \item For every $\alpha\in(0,1]$, there is some element of $\Iso_{I(\delta_1,\dots,\delta_n)}(\alpha)$ which is either a cuboid or an anti-cuboid.
    \end{enumerate}
\end{theorem}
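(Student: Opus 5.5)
We argue by induction on $n$, proving simultaneously that $I(\delta_1,\dots,\delta_n)$ is iso-tight, that $\iso_{I(\delta_1,\dots,\delta_n)}=\qiso{\delta_{\sigma(1)},\dots,\delta_{\sigma(n)}}$ where $\sigma$ sorts the $\delta_i$ increasingly, and that some optimizer is a cuboid or anti-cuboid. Since the strong product is commutative, we may relabel so that $\delta_1\le\dots\le\delta_n$. The base case $n=1$ is \Cref{1d}: there $R(A)=[0,\lebesgue(A))$ is a cuboid, and $\iso_{I(\delta_1)}(\alpha)=\min\{1,\alpha+\delta_1\}=\qiso{\delta_1}(\alpha)$.

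For the inductive step, write $I(\delta_1,\dots,\delta_n)=\Omega\boxtimes I(\delta_n)$ with $\Omega=I(\delta_1,\dots,\delta_{n-1})$. Here $\Omega$ is a Polish probability space, and by induction it is iso-tight with $\iso_\Omega=\qiso{\delta_1,\dots,\delta_{n-1}}$. By \Cref{qisoIsNice}, $\iso_\Omega$ is continuous on $(0,1]$ and concave on $[0,1]$. \Cref{isotightClassification} (with $\delta=\delta_n$; the case $\delta_n$ small is fine since $\delta\in(0,1)$) then gives iso-tightness of the product. It also produces an optimizer of one of two forms: $S\times[0,\xi^*)$, or $(S\times[0,1])\cup(\Omega\times[0,\xi^*))$, where $S\in\Iso_\Omega(\cdot)$ can be chosen to be a cuboid or anti-cuboid by induction. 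This settles part 1.

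For part 2, the four combinations must be treated separately.

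\textbf{Cuboid $S$, first form.} $S\times[0,\xi^*)$ has the same closure, hence by \eqref{closure} the same neighborhood measure, as a cuboid. So it is essentially a cuboid.

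\textbf{Anti-cuboid $S$, second form.} $(S\times[0,1])\cup(\Omega\times[0,\xi^*))$ is exactly an anti-cuboid with the extra coordinate $x_n=\xi^*$.

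\textbf{Mixed cases.} The remaining cases are the ones we expect to be the main obstacle: cuboid $\times[0,1]$ union a slab, and anti-cuboid $\times[0,\xi^*)$. Each such set $A$ is $\rootedIntervals$-layered with $u_A$ a two-step function taking values $y>x$ and jumping at $\xi^*$. If $0<x<y<1$ and $0<\xi^*<1-\delta_n$, \Cref{optimalStep} yields an affine identity for $\qiso{\delta_1,\dots,\delta_{n-1}}$ on the nondegenerate interval $[x,\alpha]$. \Cref{notAffine} then forces $\iso_\Omega\equiv1$ there, and in particular $\iso_\Omega(x)=1$. In that situation the slices at height $>\xi^*$ already have full neighborhoods. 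We can then replace the lower slices by a rearrangement without increasing the neighborhood: shrink $\xi^*$ and enlarge the upper layer, or push everything into a single layer. Either way, the optimizer degenerates to a pure cuboid $\times$ interval or a pure anti-cuboid form. The boundary cases $\xi^*\in\{0,1-\delta_n\}$, $x=0$, or $y=1$ each reduce directly to a single layer or to the two good forms. (When $y=1$, the lower layer is all of $\Omega$, and the set is the second form with a degenerate top.)

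Once every optimizer value is realized by a cuboid or anti-cuboid, \Cref{balancedCuboidsAreBest} and \Cref{balancedAnticuboidsAreBest} combined with \Cref{balancedUpperBound} identify $\iso_{I(\delta_1,\dots,\delta_n)}$ with $\qiso{\delta_1,\dots,\delta_n}$. This closes the induction hypothesis on the form of $\iso$ needed for the next step.

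The delicate step is the mixed cases. There we would first try to exploit the equality $\min_t B_\alpha(t)=B_\alpha(\xi^*)$, with $B_\alpha$ built from $\qiso{\cdot}$, to show that a mixed configuration cannot be strictly optimal unless it degenerates. The fallback is to re-choose $\xi^*$ among the minimizers of $B_\alpha$, noting that $B_\alpha(\xi^*)$ coincides with the neighborhood of an explicit cuboid or anti-cuboid given by \Cref{balancedUpperBound}.
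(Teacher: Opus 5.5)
Part 1, the base case, and the two ``pure'' combinations in part 2 are handled correctly and as in the paper. The mixed cases, however, have a genuine gap.

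\textbf{Why your mixed-case argument fails.} You slice the mixed optimizer along the last coordinate, where the step sits at $\xi^*$. Along that coordinate the two-step profile is \emph{always} degenerate:
\begin{itemize}
\item For $A=S\times[0,\xi^*)$ with $S$ an anti-cuboid, $u_A$ equals $\alpha/\xi^*$ below $\xi^*$ and $0$ above it, so $x=0$.
\item For $A=(S\times[0,1])\cup(\Omega\times[0,\xi^*))$ with $S$ a cuboid, $u_A$ equals $1$ below $\xi^*$ and $\lebesgue^{n-1}(S)$ above it, so $y=1$.
\end{itemize}
So the hypothesis $0<x<y<1$ of \Cref{optimalStep} never holds, and every mixed configuration lands in what you call the boundary cases. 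Your claim that these ``reduce directly to a single layer or to the two good forms'' is false. The single layer with $x=0$ \emph{is} anti-cuboid $\times$ interval, and the case $y=1$ \emph{is} cuboid $\times[0,1]$ plus a slab; these are exactly the configurations you had to exclude. The fallback does not close the gap either. If every optimal $S$ in $\Omega$ has the wrong type, $B_\alpha(\xi^*)$ is the neighborhood measure of a mixed set, and showing it equals one of the values in \Cref{balancedUpperBound} is the entire content of part 2.

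\textbf{The missing idea: slice in a different direction.}
\begin{enumerate}
\item Use \Cref{efficientAreBetter} to take $S$ efficient with parameter $t$. For $t\le 1$ the set already agrees, up to closure, with a cuboid (resp.\ an anti-cuboid). By \eqref{closure} this changes neither the measure nor the neighborhood.
\item For $t\ge 2$, regard the same space as $\Omega'\boxtimes I(\delta_1)$ with $\Omega'=I(\delta_n,\dots,\delta_2)$, and slice along the \emph{first} coordinate. The profile now jumps at $x_1$, and efficiency gives $0<x_1<1-\delta_1$.
\item The two values of the profile are $\xi^*$ and $\xi^*a$ (resp.\ $\xi^*+(1-\xi^*)a$ and $\xi^*$). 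Here $0<a<1$ because $0<x_2<1-\delta_2$, so the profile is non-degenerate.
\item Now \Cref{optimalStep} applies, using $\iso_{\Omega'}=\qiso{\delta_2,\dots,\delta_n}$ from the induction hypothesis. \Cref{notAffine} then forces $\iso_{\Omega'}\equiv 1$ from the lower value up to $1$.
\item Plugging this into the slice formula of \Cref{cubeSlices} shows the optimal neighborhood has measure $1$. Hence a cuboid is trivially optimal, and your rearrangement step is not needed.
\end{enumerate}
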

Without loss of generality, we may suppose that $\delta_1\leq\dots\leq\delta_n$.

Observe that if the above theorem holds for some fixed $n$, then $\iso_{I(\delta_1,\dots,\delta_n)}=\qiso{\delta_1,\dots,\delta_n}$.
Indeed, \Cref{balancedUpperBound} tells us that $\iso_{I(\delta_1,\dots,\delta_n)}\leq\qiso{\delta_1,\dots,\delta_n}$ and the lower bound follows from \Cref{balancedCuboidsAreBest,balancedAnticuboidsAreBest}.
We will need to use this fact in our inductive step.
\medskip

As mentioned the proof is by induction on $n$ with the base-case of $n=1$ being the content of \Cref{1d}.
Therefore, suppose that $n\geq 2$ and fix numbers $0<\delta_1\leq\dots\leq\delta_n<1$.
\medskip

Define $\Omega\eqdef I(\delta_1,\dots,\delta_{n-1})$, so $I(\delta_1,\dots,\delta_n)=\Omega\boxtimes I(\delta_n)$.
By the induction hypothesis, we know that $\Omega$ is iso-tight and that $\iso_\Omega=\qiso{\delta_1,\dots,\delta_{n-1}}$.
\Cref{qisoIsNice} then tells us that $\iso_\Omega$ is continuous on $(0,1]$ and concave on $[0,1]$ and so \Cref{isotightClassification} allows us to conclude that $\Omega\boxtimes I(\delta_n)=I(\delta_1,\dots,\delta_n)$ is iso-tight.
\medskip

Next, fix some $\alpha\in(0,1]$; we must show that $\Iso_{I(\delta_1,\dots,\delta_n)}(\alpha)$ contains either a cuboid or an anti-cuboid.

In addition to telling us that $I(\delta_1,\dots,\delta_n)$ is iso-tight, \Cref{isotightClassification} tells us that there is some $\xi^*\in[0,1-\delta_n]$ so that the following holds:
\begin{itemize}
    \item If $\xi^*\geq\alpha$, then for any $S\in\Iso_\Omega\bigl({\alpha\over \xi^*}\bigr)$, we have $S\times[0,\xi^*]\in\Iso_{I(\delta_1,\dots,\delta_n)}(\alpha)$.
    \item If $\xi^*\leq\alpha$, then for any $S\in\Iso_\Omega\bigl({\alpha-\xi^*\over 1-\xi^*}\bigr)$, we have $\bigl([0,1]^{n-1}\times[0,\xi^*)\bigr)\sqcup(S\times[\xi^*,1])\in\Iso_{I(\delta_1,\dots,\delta_n)}(\alpha)$.
\end{itemize}
The induction hypothesis tells us that, in either case, we can select the $S$ to be either a cuboid or an anti-cuboid.
From here we will break into cases.

\paragraph{Case $\xi^*\geq\alpha$.}
If we can select $S\in\Iso_\Omega\bigl({\alpha\over\xi^*}\bigr)$ so that $S$ is a cuboid, then certainly $X\eqdef S\times[0,\xi^*]$ is also a cuboid, as needed.
Thus, suppose that it is not possible to select $S$ to be a cuboid; we will derive a contradiction.
Thus, suppose that $S=\anticube{x_1,\dots,x_{n-1}}$ for some $x_1,\dots,x_{n-1}\in[0,1]$.
According to \Cref{efficientAreBetter}, we may suppose that $S$ is efficient in $I(\delta_1,\dots,\delta_{n-1})$, so suppose that $t\in\{0,\dots,n-1\}$ is such that $0<x_i<1-\delta_i$ for all $i\leq t$ and $x_i=0$ for all $i>t$.
Certainly, $t\geq 1$ or else $S$ is trivially a cuboid.
Additionally, we may suppose that $t\geq 2$.
Indeed, if $t=1$, then $S=[0,x_1)\times[0,1]^{n-2}$ and so $X=S\times[0,\xi^*]=[0,x_1)\times[0,1]^{n-2}\times[0,\xi^*]$.
As such, the closure of $X$ is a cuboid which has the same measure and neighborhood as $X$, and so $\Iso_{I(\delta_1,\dots,\delta_n)}(\alpha)$ contains this cuboid.

To summarize, we have argued that we can select $S=\anticube{x_1,\dots,x_{n-1}}$ where $0<x_1<1-\delta_1$ and $0<x_2<1-\delta_2$.

Up until now, we have been looking at $I(\delta_1,\dots,\delta_n)$ along its last coordinate; we will now look along its first.
However, in order to use notation set up earlier, we will equivalently consider looking along the last coordinate of $I(\delta_n,\dots,\delta_1)$.

Set $\Omega'\eqdef I(\delta_n,\dots,\delta_2)$, so $I(\delta_n,\dots,\delta_1)=\Omega'\boxtimes I(\delta_1)$.
By the induction hypothesis, $\iso_{\Omega'}=\qiso{\delta_2,\dots,\delta_n}$.
Additionally, the work above establishes that $A\eqdef[0,\xi^*]\times\anticube{x_{n-1},\dots,x_1}$ is an element of $\Iso_{\Omega'\boxtimes I(\delta_1)}(\alpha)$.
Certainly $A\subseteq\Omega'\times[0,1]$ is $\rootedIntervals$-layered and specifically
\[
    \slicex{A}{t}=\begin{cases}
        [0,\xi^*]\times[0,1]^{n-2} & \text{if }t<x_1,\\
        [0,\xi^*]\times\anticube{x_{n-1},\dots,x_2} & \text{if }t>x_1.
    \end{cases}
\]
Setting $a=\lebesgue^{n-2}(\anticube{x_{n-1},\dots,x_2})$, we have
\[
    u_A(t)=\begin{cases}
        \xi^* & \text{if }t<x_1,\\
        \xi^*a & \text{if }t>x_1.
    \end{cases}
\]
Since $0<x_1<1-\delta_1$ and $0<a<1$ (since $0<x_2<1-\delta_2$), we may therefore apply \Cref{optimalStep} (with $x \gets \xi^* a$ and $\xi \gets x_1$) to conclude that
\[
    (1-\delta_1-x_1)\iso_{\Omega'}(t)+(\delta_1+x_1)\iso_{\Omega'}\biggl({\alpha-(1-x_1)t\over x_1}\biggr)=\iso_{\Omega'}(\alpha),
\]
for all $\xi^*a\leq t\leq\alpha$.
However, since $\iso_{\Omega'}=\qiso{\delta_2,\dots,\delta_n}$ by inductive hypothesis, \Cref{notAffine} then implies that $\iso_{\Omega'}(t)=1$ for all $\xi^*a\leq t\leq\alpha$.
Naturally, this means that $\iso_{\Omega'}(t)=1$ for all $\xi^*a\leq t\leq 1$ since $\iso_{\Omega'}$ is increasing.
Therefore,
\[
    \iso_\Omega(\alpha)=\lebesgue^n\bigl(N_\Omega[A]\bigr)\geq(\delta+x_1)\iso_{\Omega'}(\xi^*)+(1-\delta-x_1)\iso_{\Omega'}(\xi^*a)=1,
\]
and so certainly $\Iso_\Omega(\alpha)$ contains both a cuboid and an anti-cuboid.

\paragraph{Case $\xi^*<\alpha$.}
Observe here that
\[
    X\eqdef\bigl([0,1]^{n-1}\times[0,\xi^*)\bigl)\sqcup\bigl(S\times[\xi^*,1]\bigr)=\bigl(S\times[0,1]\bigr)\cup\bigl\{a\in[0,1]^n:a_n<\xi^*\bigr\}.
\]
We conclude that if we can select $S\in\Iso_\Omega\bigl({\alpha-\xi^*\over 1-\xi^*}\bigr)$ to be an anti-cuboid, then $X$ is also an anti-cuboid, as needed.
Thus, suppose that it is not possible to select $S$ to be an anti-cuboid; we will again derive a contradiction.
Of course, we must be able to select $S$ to be a cuboid.
Suppose that $S=\cube{x_1,\dots,x_{n-1}}$ for some $x_1,\dots,x_{n-1}\in[0,1]$.
According to \Cref{efficientAreBetter}, we may suppose that $S$ is efficient in $I(\delta_1,\dots,\delta_{n-1})$, so suppose that $t\in\{0,\dots,n-1\}$ is such that $x_i<1-\delta_i$ for all $i\leq t$ and $x_i=1$ for all $i>t$.
Certainly $t\geq 1$ or else $S$ is trivially an anti-cuboid.
Additionally, we may suppose that $t\geq 2$.
Indeed, if $t=1$, then $S=[0,x_1]\times[0,1]^{n-2}$ and so $X=[0,x_1]\times[0,1]^{n-1}$.
Since $x_1>0$, the set $[0,x_1)\times[0,1]^{n-1}$ is an anti-cuboid which has the same measure and neighborhood as $X$, and so $\Iso_{I(\delta_1,\dots,\delta_n)}(\alpha)$ contains this anti-cuboid.

To summarize, we have argued that we can select $S=\cube{x_1,\dots,x_{n-1}}$ where $0<x_1<1-\delta_1$ and $0<x_2<1-\delta_2$.

Similarly to the case above, we will now look along the last coordinate of $I(\delta_n,\dots,\delta_1)$.
Set $\Omega'\eqdef I(\delta_n,\dots,\delta_2)$, so $I(\delta_n\dots,\delta_1)=\Omega'\boxtimes I(\delta_1)$.
By the induction hypothesis, $\iso_{\Omega'}=\qiso{\delta_2,\dots,\delta_n}$.
Additionally, the work above establishes that $A\eqdef \bigl([0,\xi^*)\times[0,1]^{n-1}\bigr)\sqcup\bigl([\xi^*,1]\times\cube{x_{n-1},\dots,x_1}\bigr)$ is an element of $\Iso_{\Omega'\boxtimes I(\delta_1)}(\alpha)$.
Observe that $A\subseteq\Omega'\times[0,1]$ is $\rootedIntervals$-layered and
\[
    \slicex{A}{t}=\begin{cases}
        \bigl([0,\xi^*)\times[0,1]^{n-2}\bigl)\sqcup\bigl([\xi^*,1]\times\cube{x_{n-1},\dots,x_2}\bigr), & \text{if }t<x_1,\\
        [0,\xi^*)\times[0,1]^{n-2}, & \text{if }t>x_1.
    \end{cases}
\]
Setting $a=\lebesgue^{n-2}(\cube{x_{n-1},\dots,x_2})$, we have
\[
    u_A(t)=\begin{cases}
        \xi^*+(1-\xi^*)a & \text{if }t<x_1,\\
        \xi^* & \text{if }t>x_1.
    \end{cases}
\]
Since $0<x_1<1-\delta_1$ and $0<a<1$ (since $0<x_2<1-\delta_2$), we may therefore apply \Cref{optimalStep} (with $x \gets \xi^*$ and $\xi\gets x_1$) to conclude that
\[
    (1-\delta_1-x_1)\iso_{\Omega'}(t)+(\delta_1+x_1)\iso_{\Omega'}\biggl({\alpha-(1-x_1)t\over x_1}\biggr)=\iso_{\Omega'}(\alpha),
\]
for all $\xi^*\leq t\leq\alpha$.
However, since $\iso_{\Omega'}=\qiso{\delta_2,\dots,\delta_n}$, \Cref{notAffine} then implies that $\iso_{\Omega'}(t)=1$ for all $\xi^*\leq t\leq\alpha$.
Naturally, this means that $\iso_{\Omega'}(t)=1$ for all $\xi^*\leq t\leq 1$ since $\iso_{\Omega'}$ is increasing.
Therefore,
\[
    \iso_\Omega(\alpha)=\lebesgue^n\bigl(N_\Omega[A]\bigr)\geq(\delta_1+x_1)\iso_{\Omega'}(\xi^*+(1-\xi^*)a)+(1-\delta_1-x_1)\iso_{\Omega'}(\xi^*)=1,
\]
and so certainly $\Iso_\Omega(\alpha)$ contains both a cuboid and an anti-cuboid.

\section{Corollaries}\label{corollaries}
In this section, we derive a few corollaries of our main isoperimetric inequality.
Firstly, we can combine \Cref{qualitativeIsoperimetric} and \Cref{circle=interval} to deduce:

\begin{theorem}\label[theorem]{isoperimetricBound}
    Fix numbers $0<\delta_1,\dots,\delta_n\leq 1$.
    If $X_i\in\{I(\delta_i),S(\delta_i/2)\}$ for each $i\in[n]$, then there is either a cuboid or anti-cuboid contained within $\Iso_{X_1\boxtimes\dots\boxtimes X_n}(\alpha)$ for all $\alpha$.
    In particular, if $\delta_1\leq\dots\leq\delta_n$, then
    \[
        \iso_{X_1\boxtimes\dots\boxtimes X_n}(\alpha)=\min_{t\in\{0,\dots,n\}}\min\bigl\{\bcube{\delta_1,\dots,\delta_t}(\alpha),\ \banticube{\delta_1,\dots,\delta_t}(\alpha)\bigr\}
    \]
    for all $\alpha\in(0,1]$.
\end{theorem}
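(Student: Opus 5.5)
The plan is to reduce everything to the pure interval product $I(\delta_1,\dots,\delta_n)$ and then read off the formula from results already proved. First I will dispose of the boundary case $\delta_i=1$, which \Cref{qualitativeIsoperimetric} does not cover since it assumes $\delta_i\in(0,1)$. If some $\delta_i=1$, then in $I(1)$ every ball of radius $1$ around a point contains... not quite everything, so I will handle it by a limiting or direct argument. The simplest route is to observe that $N_{I(1)}[\{s\}]\supseteq[0,1]\setminus\{s\pm 1\}$, so any nonempty slice has full-measure neighborhood. Then a coordinate with $\delta_i=1$ can be treated as contributing the factor $\min\{1,x_i+1\}=1$. Concretely, I expect the cuboid/anti-cuboid structure to go through verbatim. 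This is because \Cref{efficientAreBetter}, \Cref{balancedUpperBound} and \Cref{balancedCuboidsAreBest} only use $\delta_i\le 1$ in ways that survive equality, and \Cref{1d} already allows $\delta=1$. If this check becomes delicate, I will instead approximate $\delta_i=1$ by $\delta_i=1-\epsilon$ and use monotonicity of neighborhoods in $\delta$. Either way, this boundary bookkeeping is the step I expect to be the main nuisance.

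With all $\delta_i\in(0,1)$ (after reordering, which is harmless since the strong product is commutative up to measure-preserving isometry), \Cref{circle=interval} gives $\iso_{X_1\boxtimes\dots\boxtimes X_n}=\iso_{I(\delta_1,\dots,\delta_n)}$. For the existence of a cuboid or anti-cuboid optimizer, I note that in the proof of \Cref{circle=interval1} the neighborhood measures of $\rootedIntervals$-layered sets agree in $\Omega\boxtimes I(\delta)$ and $\Omega\boxtimes S(\delta/2)$. Moreover, cuboids and anti-cuboids (with coordinates viewed in $[0,1)$, up to null boundary sets, using \cref{closure}) are $\rootedIntervals$-layered in every coordinate. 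Inducting on the number of circle factors, as in \Cref{circle=interval}, a cuboid or anti-cuboid lying in $\Iso_{I(\delta_1,\dots,\delta_n)}(\alpha)$ has the same measure and neighborhood measure in $X_1\boxtimes\dots\boxtimes X_n$. It therefore lies in $\Iso_{X_1\boxtimes\dots\boxtimes X_n}(\alpha)$, and \Cref{qualitativeIsoperimetric} supplies such a set.

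For the formula, I combine the two bounds. \Cref{balancedUpperBound} gives $\iso\le\qiso{\delta_1,\dots,\delta_n}$. Conversely, the optimizer from \Cref{qualitativeIsoperimetric} is a cuboid or anti-cuboid of measure at least $\alpha$, shrinkable to measure exactly $\alpha$ without enlarging the neighborhood. Its neighborhood measure is therefore at least the right-hand side by \Cref{balancedCuboidsAreBest,balancedAnticuboidsAreBest}. Since $\alpha>0$, this right-hand side coincides with $\qiso{\delta_1,\dots,\delta_n}(\alpha)$, which completes the identification.
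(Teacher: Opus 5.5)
Your argument for $0<\delta_i<1$ is the paper's. The paper's proof just combines \Cref{qualitativeIsoperimetric} with \Cref{circle=interval}, together with the remark after \Cref{qualitativeIsoperimetric} that \Cref{balancedUpperBound,balancedCuboidsAreBest,balancedAnticuboidsAreBest} turn a cuboid/anti-cuboid optimizer into $\iso_{I(\delta_1,\dots,\delta_n)}=\qiso{\delta_1,\dots,\delta_n}$. Your observation that cuboids and anti-cuboids are $\rootedIntervals$-layered in every coordinate is correct. By part~4 of \Cref{cubeSlices}, they then have the same neighborhood measure whether a factor is $I(\delta_i)$ or $S(\delta_i/2)$. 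This is exactly what transfers the \emph{optimizer}, and not only the value of $\iso$, to mixed products, a step the paper leaves implicit.

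The gap is the case $\delta_i=1$, which the statement allows but \Cref{qualitativeIsoperimetric} excludes. The paper's one-line deduction is silent here too, and $\bcube{\cdot}$, $\banticube{\cdot}$ are only defined for $\delta_i<1$. Your paragraph on this case is a plan, not a proof, and your reason for it going through ``verbatim'' is wrong:
\begin{itemize}
\item The lemmas you cite only compare cuboids with cuboids. The reduction of arbitrary sets to cuboids and anti-cuboids is the induction in \Cref{qualitativeIsoperimetric}, whose step uses \Cref{isotightClassification,mainBound}. These genuinely need $\gamma=1-\delta>0$ (uniform measure on $(0,\gamma)$, $\xi(u)\in(0,\gamma)$).
\item \Cref{balancedUpperBound} does not survive $\delta_t=1$ either: its anti-cuboid has $1-x_t=\bigl((1-\alpha)/\prod_{j\le t}\delta_j\bigr)^{1/t}\ge 1$, i.e.\ $x_t\le 0$.
\item The fallback of approximating by $\delta_i=1-\epsilon$ would need continuity of the formula in the $\delta$'s and explicit witnesses at $\delta=1$, none of which you carry out.
\end{itemize}

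A clean fix is as follows. Let $X\in\{I(1),S(1/2)\}$. By \Cref{compressionsAreBetter,1d}, both valid at $\delta=1$, it suffices to treat $\rootedIntervals$-layered $A\subseteq\Omega\times[0,1)$. Part~4 of \Cref{cubeSlices} then gives $(\mu\boxtimes\lebesgue)(N_{\Omega\boxtimes X}[A])=n_A(0)\ge\iso_\Omega(u_A(0))\ge\iso_\Omega(\alpha)$, since $u_A$ is decreasing with integral at least $\alpha$. The set $S\times[0,1]$ attains this for $S\in\Iso_\Omega(\alpha)$, so $\iso_{\Omega\boxtimes X}=\iso_\Omega$. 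Hence coordinates with $\delta_i=1$ can be stripped while preserving cuboids and anti-cuboids.

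You must then still check that the extra terms with $\delta_t=1$ never undercut the formula for the remaining $\delta$'s. For $\bcube{\delta_1,\dots,\delta_t}$, the balanced cuboid is still a legitimate witness. For $\banticube{\delta_1,\dots,\delta_t}$, with $k$ the number of $\delta_i<1$, $P=\prod_{i\le k}\delta_i$ and $s=1-\alpha$, the check reduces to the elementary inequality $(s^{1/t}-P^{1/t})^t\le(s^{1/k}-P^{1/k})^k$ for $t>k\ge 1$ and $0<P\le s\le 1$. Your proposal does not address this.
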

Observe that \Cref{mainCube} is the special case of \Cref{isoperimetricBound} where $X_i=I(\delta)$ for every $i$, and \Cref{mainTorus} is the special case where $X_i=S(\delta)$ for every $i$.
\medskip

We next turn our attention to graph isoperimetric inequalities.
Let $G$ be a graph.
For a subset $A\subseteq V(G)$, we use $N_G[A]$ to denote the closed neighborhood of $A$, that is all vertices in $A$ or adjacent to some vertex in $A$, despite the slight conflict in notation with $N_\Omega$ for a metric space $\Omega$.
For a graph $G$ and a number $\alpha\in[\abs{V(G)}]$, write $\iso_G(\alpha)\eqdef\min\{\abs{N_G[A]}:A\subseteq V(G),\ \abs A\geq\alpha\}$.
For a positive integer $d$, the graph $G^d$ has the same vertex-set as does $G$ and $uv\in E(G^d)$ if the distance between $u$ and $v$ in $G$ is at most $d$.

\begin{theorem}\label{thm:discretize}
    Fix positive integers $k_1, \dots, k_n$ and $d_1,\dots,d_n$ with $d_i\leq k_i$ and fix graphs $G_i\in\{P_{k_i}^{d_i},C_{k_i}^{d_i}\}$ for each $i\in[n]$.
    Define $X_i=I(d_i/k_i)$ if $G_i=P_{k_i}^{d_i}$ and $X_i=S(d_i/k_i)$ if $G_i=C_{k_i}^{d_i}$.
    Then
    \[
        \iso_{G_1\boxtimes\dots\boxtimes G_n}(x)\geq\biggl(\prod_{i=1}^n{k_i}\biggr)\cdot\iso_{X_1\boxtimes\dots\boxtimes X_n}\biggl({x\over\prod_{i=1}^n k_i}\biggr).
    \]
\end{theorem}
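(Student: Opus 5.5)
We plan to discretize: to each vertex set $A\subseteq V(G_1\boxtimes\dots\boxtimes G_n)$ we associate a union of small boxes in $[0,1]^n$, and compare both the measure and the neighborhood. Identify $V(G_i)$ with $\{0,1,\dots,k_i-1\}$ and send a vertex $v=(v_1,\dots,v_n)$ to the half-open box
\[
    B(v)\eqdef\prod_{i=1}^n\Bigl[{v_i\over k_i},{v_i+1\over k_i}\Bigr).
\]
These boxes tile $[0,1)^n$, and each has measure $1/\prod_i k_i$. Given $A$ with $\abs A\geq x$, set $\hat A=\bigcup_{v\in A}B(v)$, so that $\lebesgue^n(\hat A)=\abs A/\prod_i k_i\geq x/\prod_i k_i$. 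Since $\iso$ is monotone in its argument, it then suffices to prove
\[
    \lebesgue^n\bigl(N_{X_1\boxtimes\dots\boxtimes X_n}[\hat A]\bigr)\leq{\abs{N_{G_1\boxtimes\dots\boxtimes G_n}[A]}\over\prod_i k_i}.
\]
We will in fact prove the containment $N[\hat A]\subseteq\widehat{N[A]}$, up to boundary points of measure zero such as the point $1$ in the interval case.

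The containment reduces to one coordinate, because both the strong product of graphs and the $\boxtimes$ metric are defined coordinatewise (a maximum of the coordinate distances). So fix $i$ and a point $y\in[0,1]$ with $d_{X_i}(y,a)<1$ for some $a\in[v_i/k_i,(v_i+1)/k_i)$. Then $\abs{y-a}<d_i/k_i$ in $I$, or the circular distance is less than $d_i/k_i$ in $S$. If $y$ lies in the cell of $w_i$, that is $y\in[w_i/k_i,(w_i+1)/k_i)$, then $\abs{w_i-v_i}<d_i+1$ (cyclically for $S$). Since this is a difference of integers, $\abs{w_i-v_i}\leq d_i$, which says exactly that $w_i=v_i$ or $w_iv_i\in E(G_i)$, where $G_i=P_{k_i}^{d_i}$ or $C_{k_i}^{d_i}$ respectively. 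Applying this in every coordinate, any point of $N[\hat A]$ lies in $B(w)$ for some $w$ adjacent or equal to some $v\in A$. Hence $N[\hat A]\subseteq\widehat{N[A]}$, and taking measures gives the displayed inequality.

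Finally, we take $A$ to be an optimizer of $\iso_G(x)$ and combine the two facts:
\[
    \iso_G(x)=\abs{N[A]}\geq\Bigl(\prod_i k_i\Bigr)\lebesgue^n(N[\hat A])\geq\Bigl(\prod_i k_i\Bigr)\iso_X\bigl(x/\textstyle\prod_i k_i\bigr).
\]

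The main obstacle is the strictness bookkeeping in the one-dimensional step. Half-open cells combined with the strict inequality $<1$ in the definition of $N$ are what turn ``$<d_i+1$'' into ``$\leq d_i$''. For the circle we also need to check the normalization: the circular metric $S(\delta)$ with $\delta=d_i/k_i$, and wraparound that matches the cyclic adjacency of $C_{k_i}^{d_i}$, including the degenerate case where $2d_i\geq k_i$ and the power graph is complete. I would verify these endpoint cases directly, noting that any discrepancy occurs only on a measure-zero set.
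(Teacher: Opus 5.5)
Your proposal is correct and follows the paper's proof. Both use the same embedding of each vertex as the half-open box $\prod_i[v_i/k_i,(v_i+1)/k_i)$; the paper simply asserts that this map carries graph neighborhoods to metric neighborhoods up to a null set, while you explicitly verify the containment $N[\hat A]\subseteq\widehat{N[A]}$, which is the only direction needed, via a coordinatewise integer-rounding argument (including the cyclic case) that fills in exactly the check the paper leaves to the reader.
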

\begin{proof}
    We may label the vertices of $G_i$ as $\{0,\dots,k_i-1\}$.
    Consider the map $f\colon V(G_1\boxtimes\dots\boxtimes G_n)\to 2^{[0,1)^n}$ given by $f(x_1,\dots,x_n)=\prod_{i=1}^n\bigl[{x_i\over k_i},{x_i+1\over k_i}\bigr)$.
    Extend $f$ to $2^{V(G_1\boxtimes\dots\boxtimes G_n)}$ by defining $f(S)=\bigcup_{s\in S}f(s)$.
    Observe that $\lebesgue^{k}(f(S))=\abs S/\prod_{i=1}^n k_i$ for any $S\subseteq V(G_1\boxtimes\dots\boxtimes G_n)$.
    Furthermore, it is quick to observe that $f(N_{G_1\boxtimes\dots\boxtimes G_n}[S])=N_{X_1\boxtimes\dots\boxtimes X_n}[f(S)]$, up to a set of measure 0.
    Thus the claim follows.
\end{proof}
Naturally, equality does not need to hold in the above theorem unless certain divisibility conditions relating $x$ to $d_1,\dots,d_n,k_1,\dots,k_n$ hold.
However, equality does hold infinitely often.
\medskip

As a final result, we note that the celebrated isoperimetric inequality of Bollob\'as--Leader~\cite{bollobas_edge} is a corollary of our results.

The (restricted) $\ell_p$ Minkowski content of a measurable subset $A\subseteq[0,1]^n$ is defined to be
\[
    M_p(A)\eqdef\lim_{\epsilon\to 0^+}{\lebesgue^n\bigl((A+\epsilon B_p)\cap[0,1]^n\bigr)-\lebesgue^n(A)\over\epsilon},
\]
where $B_p$ is the $\ell_p$ unit ball.

\begin{theorem}\label[theorem]{minkowskiContent}
    For any measurable $A\subseteq[0,1]^n$, there is a cuboid or anticuboid $X\subseteq[0,1]^n$ with $\lebesgue^n(X)=\lebesgue^n(A)$ and $M_\infty(X)\leq M_\infty(A)$.
\end{theorem}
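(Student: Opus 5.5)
The plan is to compare the $\epsilon$-thickening of $A$ with the $\ell_\infty$-neighborhoods studied above, apply \Cref{isoperimetricBound} with all $\delta_i=\epsilon$, and then let $\epsilon\to 0^+$. Set $\alpha=\lebesgue^n(A)$. If $\alpha\in\{0,1\}$, take $X=\varnothing$ or $X=[0,1]^n$ respectively. Both are degenerate cuboids or anti-cuboids with $M_\infty(X)=0\le M_\infty(A)$, because the content is restricted to $[0,1]^n$. So assume $0<\alpha<1$. If the limit defining $M_\infty(A)$ fails to exist, I read it as a $\liminf$, which only makes the inequality to be proved weaker.

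First, since $B_\infty$ is the closed unit ball, $A+\epsilon B_\infty\supseteq N_\epsilon[A]$. Intersecting with $[0,1]^n$ gives exactly $N_{I(\epsilon,\dots,\epsilon)}[A]$, the neighborhood in the product of $n$ copies of $I(\epsilon)$. Hence for every $0<\epsilon\le 1$, \Cref{isoperimetricBound} (with $X_i=I(\epsilon)$) yields
\[
    \lebesgue^n\bigl((A+\epsilon B_\infty)\cap[0,1]^n\bigr)\ \ge\ \min_{t\in\{0,\dots,n\}}\min\bigl\{\bcube{\epsilon,\dots,\epsilon}(\alpha),\ \banticube{\epsilon,\dots,\epsilon}(\alpha)\bigr\},
\]
where the $t$-th term uses $t$ copies of $\epsilon$. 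For $\epsilon$ small (namely $\epsilon^t<1-\alpha$), every term with $t\ge1$ is finite. These terms are $(\alpha^{1/t}+\epsilon)^t$ and $1-((1-\alpha)^{1/t}-\epsilon)^t$, while the $t=0$ term equals $1$.

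Second, subtract $\alpha$, divide by $\epsilon$, and take $\liminf_{\epsilon\to0^+}$. The minimum is over finitely many functions, so the $\liminf$ of the minimum is the minimum of the individual limits. The $t=0$ term contributes $(1-\alpha)/\epsilon\to\infty$ and drops out. For $t\ge 1$, differentiating at $\epsilon=0$ gives the limits $t\alpha^{1-1/t}$ and $t(1-\alpha)^{1-1/t}$. Therefore
\[
    M_\infty(A)\ \ge\ m\eqdef\min_{t\in[n]}\min\bigl\{t\alpha^{1-1/t},\ t(1-\alpha)^{1-1/t}\bigr\}.
\]

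Third, choose $t$ and the type (cuboid or anti-cuboid) attaining $m$. Take $X=[0,\alpha^{1/t}]^t\times[0,1]^{n-t}$, or $X=[0,1]^n\setminus\bigl([0,(1-\alpha)^{1/t}]^t\times[0,1]^{n-t}\bigr)$, which up to a null set is the anti-cuboid $\anticube{x_1,\dots,x_n}$ with $x_i=1-(1-\alpha)^{1/t}$ for $i\le t$ and $x_i=0$ otherwise. In both cases $\lebesgue^n(X)=\alpha$. A direct computation of $(X+\epsilon B_\infty)\cap[0,1]^n$ gives exactly $(\alpha^{1/t}+\epsilon)^t$, respectively $1-((1-\alpha)^{1/t}-\epsilon)^t$, for small $\epsilon$; here clipping to the cube kills growth in the full coordinates $i>t$. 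So $M_\infty(X)=m\le M_\infty(A)$.

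I expect the main points needing care to be the boundary bookkeeping: the closed versus open ball, sets of measure zero, and the fact that the restricted content ignores faces on $\partial[0,1]^n$. The interchange of $\liminf$ and $\min$ also needs checking, but it is safe because the index set is finite.
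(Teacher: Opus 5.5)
Your proof is correct and follows essentially the paper's route. Both arguments bound the thickening below by $\iso_{I(\epsilon,\dots,\epsilon)}(\alpha)$ via \Cref{isoperimetricBound}, then pass to the limit using that only finitely many candidate terms occur. Where the paper picks a subsequence $\epsilon_m$ with a fixed optimal $t$ and type, you interchange $\liminf$ and $\min$. Your version also handles $\alpha\in\{0,1\}$ and a possibly nonexistent limit more carefully than the paper does.

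One small slip: $[0,1]^n\setminus\bigl([0,(1-\alpha)^{1/t}]^t\times[0,1]^{n-t}\bigr)$ is not equal to $\anticube{x_1,\dots,x_n}$ up to a null set. It is the image of that anti-cuboid under the reflection $a_i\mapsto 1-a_i$ for $i\le t$. The same computation applies verbatim to the anti-cuboid itself, so you should simply take $X=\anticube{x_1,\dots,x_n}$.
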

\begin{proof}
    Fix $0<\epsilon<1$ and set $\alpha=\lebesgue^n(A)$.
    Then $(A+\epsilon B_\infty)\cap[0,1]^n$ is precisely $N_{I(\epsilon,\dots,\epsilon)}[A]$ and so $\lebesgue^n\bigl((A+\epsilon B_\infty)\cap[0,1]^n\bigr)\geq\iso_{I(\epsilon,\dots,\epsilon)}(\alpha)$.
    Due to \Cref{isoperimetricBound}, we can find a sequence $\epsilon_m\to 0$ and some fixed $t\in\{0,\dots,d\}$ such that either $\iso_{I(\epsilon_m,\dots,\epsilon_m)}(\alpha)=\bcube{\underbrace{\epsilon_m,\dots,\epsilon_m}_t}(\alpha)$ for all $m$ or $\iso_{I(\epsilon_m,\dots,\epsilon_m)}(\alpha)=\banticube{\underbrace{\epsilon_m,\dots,\epsilon_m}_t}(\alpha)$ for all $m$.
    In either case, we can find either a cuboid or anti-cuboid $X$ with $\lebesgue^n(X)=\alpha$ and $\lebesgue^n(N_{I(\epsilon_m,\dots,\epsilon_m)}[X])=\iso_{I(\epsilon_m,\dots,\epsilon_m)}(\alpha)$ for all $n$.
    Therefore, $M_\infty(X)\leq M_\infty(A)$, which concludes the proof.
\end{proof}

A subset $A\subseteq[0,1]^n$ is said to be \emph{rectilinear} if it can be written as the finite union of boxes.
\begin{prop}\label[prop]{rectilinear}
    If $A\subseteq[0,1]^n$ is rectilinear, then $M_p(A)=M_\infty(A)$ for all $p\geq 1$.
\end{prop}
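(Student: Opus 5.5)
We will compare the volume of the $\epsilon$-neighbourhood for $\ell_p$ and $\ell_\infty$ up to an $o(\epsilon)$ error. Since all $\ell_p$ balls satisfy $B_1\subseteq B_p\subseteq B_\infty$, we get monotonicity $M_1(A)\le M_p(A)\le M_\infty(A)$, provided the limits exist. So it suffices to show that $M_1(A)$ and $M_\infty(A)$ exist and are equal, and that both equal the exposed $(n-1)$-dimensional surface area of $\partial A\setminus\partial[0,1]^n$.

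\textbf{Step 1: a grid decomposition.} Write $A$ as a finite union of boxes and collect all coordinates of box endpoints. This yields, in each coordinate $i$, a finite set of hyperplanes $x_i=c$. Together these cut $[0,1]^n$ into finitely many grid cells. Up to a null set, $A$ is a union of such cells. The boundary $\partial A\setminus\partial[0,1]^n$ is then, up to lower-dimensional pieces, a finite union of facets. Each facet is a common $(n-1)$-face of a cell in $A$ and a cell not in $A$, perpendicular to some coordinate direction $e_i$. Let $F_1,\dots,F_m$ be these facets, with total area $\sigma$.

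\textbf{Step 2: asymptotics of the neighbourhood.} Let $\eta$ be the minimal spacing between distinct grid hyperplanes, and take $\epsilon<\eta/2$. Consider the set $\bigl((A+\epsilon B_p)\cap[0,1]^n\bigr)\setminus A$ and split it according to the nearest point of $A$.

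For each facet $F_j$ perpendicular to $e_i$, the slab $\{x+se_i: x\in \mathrm{relint}\,F_j,\ 0<s<\epsilon\}$ on the outside of $A$ is contained in the neighbourhood. This holds because the segment from $x$ to $x+se_i$ has every $\ell_p$ norm equal to $s$. The slab has volume $\epsilon\cdot\mathrm{area}(F_j)$.

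Conversely, every point of the neighbourhood outside $A$ lies within $\ell_\infty$-distance $\epsilon$ of $A$. Such a point either lies in one of these slabs, or lies within distance $\epsilon$ of the $(n-2)$-skeleton of the grid. Points of the second kind form a set of volume $O(\epsilon^2)$, with the constant depending only on the grid.

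Hence, for every $p\in[1,\infty]$,
\[
\epsilon\sigma\le\lebesgue^n\bigl((A+\epsilon B_p)\cap[0,1]^n\bigr)-\lebesgue^n(A)\le\epsilon\sigma+O(\epsilon^2).
\]
Dividing by $\epsilon$ and letting $\epsilon\to0^+$ gives $M_p(A)=\sigma$ for all $p$.

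\textbf{Main obstacle.} The delicate part is the slab claim. We must check that the outward slabs over distinct facets overlap only in a set of measure $O(\epsilon^2)$. We also need the slabs to lie inside $[0,1]^n$ and outside $A$, given $\epsilon<\eta/2$, except near the $(n-2)$-skeleton. Facets lying on $\partial[0,1]^n$ contribute nothing, because we intersect with the cube.

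I would handle both points via the grid structure. Away from distance $\epsilon$ of the $(n-2)$-skeleton, a point within $\ell_\infty$-distance $\epsilon<\eta/2$ of a cell boundary is close to exactly one hyperplane. Its status is therefore determined by the unique facet adjacent to it. Once this is settled, the volume estimate is routine.
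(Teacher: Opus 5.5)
Your argument is sound for unions of full-dimensional boxes, but it takes a different and heavier route than the paper.

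\textbf{The paper's route.} The paper never identifies $\partial A$ or computes $M_p(A)$. It only shows $\lebesgue^n(A+\epsilon B_\infty)-\lebesgue^n(A+\epsilon B_p)=O(\epsilon^2)$; dropping the intersection with $[0,1]^n$ is harmless, since it only shrinks the difference set. For a single box $R=\prod_i[x_i,y_i]$, the set $R+\epsilon B_p$ already contains $R$ together with the $2n$ face slabs. Their volume matches the constant and linear terms of $\prod_i(y_i-x_i+2\epsilon)=\lebesgue^n(R+\epsilon B_\infty)$. For $A=\bigcup_i R_i$, the inclusion $(A+\epsilon B_\infty)\setminus(A+\epsilon B_p)\subseteq\bigcup_i(R_i+\epsilon B_\infty)\setminus(R_i+\epsilon B_p)$ reduces everything to single boxes. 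This avoids the grid, the slab-overlap bookkeeping, and the exposed-versus-interior facet analysis you flag as the main obstacle.

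\textbf{What your route buys.} You get the value $M_p(A)=\sigma$, with matching upper and lower bounds uniform in $p$. That gives existence of the limits, which the paper leaves implicit: it only shows the two difference quotients differ by $o(1)$. It also identifies $M_p$ with exposed surface area, which is how the Bollob\'as--Leader statement in the introduction is phrased.

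Two small remarks on your write-up. The sandwich $B_1\subseteq B_p\subseteq B_\infty$ is redundant, since Step 2 is already uniform in $p$. Also, as you note at the end, the displayed lower bound should read $\epsilon\sigma-O(\epsilon^2)$: slabs over two facets of the same outside cell that meet along an $(n-2)$-face overlap in a corner region.

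\textbf{One point needs repair.} In Step 1, ``up to a null set'' is not harmless, because $A+\epsilon B_p$ is not invariant under null modifications of $A$. The paper's appendix allows degenerate boxes $\prod_i[x_i,y_i]$ with $x_i=y_i$. Take $A=\{1/2\}\times[0,1]\subseteq[0,1]^2$. Your $\sigma$ is $0$, yet $M_p(A)=2$. Your upper-bound step fails here: the point $(1/2+\epsilon/2,1/2)$ lies in the neighbourhood, in no slab, and far from the $0$-skeleton.

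There are two ways to fix this:
\begin{itemize}
\item assume explicitly that boxes are full-dimensional; or
\item use that $A$ itself (not just up to null sets) is a union of closed grid faces of all dimensions, and let $\sigma$ count each $(n-1)$-face $F\subseteq A$ once for every adjacent cell not contained in $A$.
\end{itemize}
With the second fix, your slab/skeleton analysis goes through unchanged. The paper's box-by-box argument handles degenerate boxes automatically.
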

The proof is routine and we relegate it to \Cref{sec:rectilinear}. Note that this does does not hold for non-rectilinear sets.

With \Cref{rectilinear} in hand, and the observation that cuboids and anticuboids themselves are rectilinear, \Cref{minkowskiContent} immediately implies the celebrated isoperimetric inequality of Bollob\'as and Leader.
\begin{corollary}[Bollob\'as--Leader~\cite{bollobas_edge}]
    If $A\subseteq[0,1]^n$ is rectilinear, then there is a cuboid or anticuboid $X\subseteq[0,1]^n$ with $\lebesgue^n(X)=\lebesgue^n(A)$ and $M_p(X)\leq M_p(A)$ for every $p\geq 1$.
\end{corollary}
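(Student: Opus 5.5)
The corollary follows by combining \Cref{minkowskiContent} with \Cref{rectilinear}. The one subtlety is that \Cref{minkowskiContent} produces a set $X$ that is optimal for $M_\infty$, while we need a single $X$ that works for every $p\geq 1$ at once. Rectilinearity is exactly what lets us transfer between $M_\infty$ and $M_p$, both for $A$ and for $X$.

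First, fix a rectilinear $A\subseteq[0,1]^n$ and apply \Cref{minkowskiContent}. This gives a cuboid or anticuboid $X$ with $\lebesgue^n(X)=\lebesgue^n(A)$ and $M_\infty(X)\leq M_\infty(A)$. Note that $X$ depends only on $A$, not on $p$.

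Second, observe that $X$ is itself rectilinear. A cuboid $\prod_i[0,x_i]$ is a single box. An anticuboid $\bigcup_i\{a\in[0,1]^n:a_i<x_i\}$ is a union of at most $n$ boxes $[0,1]^{i-1}\times[0,x_i)\times[0,1]^{n-i}$. Whether the boxes are half-open or closed changes nothing here: $M_p$ is unchanged by passing to the closure, because $(A+\epsilon B_p)$ and $(\overline A+\epsilon B_p)$ differ by a null set for open $B_p$, as in \cref{closure}, and the volume term is unchanged too. If the definition of ``box'' requires closed boxes, I would add a remark to this effect.

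Third, apply \Cref{rectilinear} to both $A$ and $X$ to get $M_p(A)=M_\infty(A)$ and $M_p(X)=M_\infty(X)$ for every $p\geq 1$. Chaining these gives $M_p(X)=M_\infty(X)\leq M_\infty(A)=M_p(A)$.

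The only genuine content beyond bookkeeping is \Cref{rectilinear}, which is deferred to the appendix. Given that result, the step needing care is the second one: confirming that anticuboids, and degenerate cases such as $x_i=0$ or $x_i=1$, are rectilinear in the sense required by \Cref{rectilinear}. In the degenerate cases some boxes are empty or the whole cube, which still fits the definition.
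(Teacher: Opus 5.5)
Your proposal is correct and matches the paper's argument: the paper likewise takes $X$ from \Cref{minkowskiContent}, observes that cuboids and anticuboids are rectilinear, and applies \Cref{rectilinear} to both $A$ and $X$ to chain $M_p(X)=M_\infty(X)\leq M_\infty(A)=M_p(A)$. Your remark about half-open versus closed boxes is harmless extra care that the paper leaves implicit.
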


We close with a general question regarding variants of \Cref{mainCube}. What is the minimum possible $\lebesgue(N_{\delta}[A])$ among $A \subset [0,1]^n$, if $N_\delta[A] \eqdef \{x \in [0,1]^n: ||x-a||_p< \delta \text{ for some } a \in A\}$, for other $\ell_p$ norms? \Cref{mainCube} answers this precisely for when $p=\infty$. Another theorem of Bollob\'as--Leader \cite{bollobas1991vertex} resolves the graph isoperimetric problem for the (usual) grid graph on $[k]^n$, which by an argument analogous to the proof of \Cref{thm:discretize}
likewise answers this question in full for the case $p=1$. The $p=2$ case is elusive: indeed, it is still unknown (see e.g.\ \cite{bollobas_edge, chambers2023_3D, ritore1996spaces, ros2001isoperimetric}) what the smallest $M_2(X)$ is among all (not necessarily rectilinear) $X \subseteq [0,1]^n$ of given volume. An answer to our question would likely also answer this question by another limiting argument as in the proof of \Cref{minkowskiContent}.

\bibliographystyle{abbrv}
\bibliography{references}

@article{bollobas1991vertex,
  title={Compressions and isoperimetric inequalities},
  author={Bollob{\'a}s, B{\'e}la and Leader, Imre},
  journal={Journal of Combinatorial Theory, Series A},
  volume={56},
  number={1},
  pages={47--62},
  year={1991},
  publisher={Elsevier}
}

@article{bollobas_edge,
  doi = {10.1007/bf01275667},
  url = {https://doi.org/10.1007/bf01275667},
  year = {1991},
  month = dec,
  publisher = {Springer Science and Business Media {LLC}},
  volume = {11},
  number = {4},
  pages = {299--314},
  author = {B{\'e}la Bollob{\'a}s and Imre Leader},
  title = {Edge-isoperimetric inequalities in the grid},
  journal = {Combinatorica}
}

@article{chambers2023_3D,
  title={On the relative isoperimetric problem for the cube},
  author={Chambers, Gregory R and Mouill{\'e}, Lawrence},
  journal={arXiv preprint arXiv:2302.04382},
  year={2023}
}

@article{harper1999l0,
  title={On an isoperimetric problem for Hamming graphs},
  author={Harper, Lawrence H},
  journal={Discrete applied mathematics},
  volume={95},
  number={1-3},
  pages={285--309},
  year={1999},
  publisher={Elsevier}
}

@article{radcliffe_vertex,
  title={Vertex isoperimetric inequalities for a family of graphs on $\mathbb{Z}^k$},
  author={Radcliffe, J and Veomett, E},
  journal={Electronic Journal of Combinatorics},
  volume={19},
  number={2},
  pages={P45},
  year={2012}
}

@article{ritore1996spaces,
  title={The spaces of index one minimal surfaces and stable constant mean curvature surfaces embedded in flat three manifolds},
  author={Ritor{\'e}, Manuel and Ros, Antonio},
  journal={Transactions of the American Mathematical Society},
  volume={348},
  number={1},
  pages={391--410},
  year={1996}
}

@article{ros2001isoperimetric,
  title={The isoperimetric problem},
  author={Ros, Antonio},
  journal={Global theory of minimal surfaces},
  volume={2},
  pages={175--209},
  year={2001}
}

@article{wang2025_2D,
  title={Discrete isoperimetric inequalities on the strong products of paths},
  author={Wang, Runze},
  journal={arXiv preprint arXiv:2502.12199},
  year={2025}
}

@book{dudley_book,
  author    = {R. M. Dudley},
  title     = {Real Analysis and Probability},
  series    = {Cambridge Studies in Advanced Mathematics},
  volume    = {74},
  publisher = {Cambridge University Press},
  year      = {2002}
}

@book{kechris_book,
  author    = {Alexander S. Kechris},
  title     = {Classical Descriptive Set Theory},
  series    = {Graduate Texts in Mathematics},
  volume    = {156},
  publisher = {Springer},
  year      = {1995}
}

\appendix
\section{Proof of \Cref{minWithDifferentDomains}}\label[appendix]{sec:minWithDifferentDomains}

Define $g\colon I_n\to\R$ by $g\eqdef\min_{i\in[n]}\breve f_i$.
Since $I_1\subseteq I_2\subseteq\cdots\subseteq I_n$, we can equivalently write
\[
    g(x)=\min_{\substack{i\in[n]:\\ x\in I_i}}f_i(x).
\]
We begin with a quick observation:
\begin{equation}\label{eqn:interior}
    g(x)=\min_{\substack{i\in[n]:\\ x\in I_i^\circ}}f_i(x),\qquad\text{for all }x\in I_n^\circ.
\end{equation}
Here, $I^\circ$ denotes the interior of the interval $I$.
To see why this is the case, fix $x\in I_n^\circ$ and let $T\subseteq[n]$ be the set of all $i\in[n]$ for which $f_i(x)=g(x)$; we need to show that $x\in I_i^\circ$ for some $i\in T$.
Suppose this to be false; hence, $x\in\partial I_i$ for all $i\in T$.
Set $t=\max T$, so we know that $x\in\partial I_t\setminus\partial I_n$ (since $x\in I_n^\circ$); therefore, \eqref{cond:smaller} implies that either $f_t(x)>\min_{j\in[n]}\breve f_j(x)$
or
$f_t(x)\geq\min_{j\in\{t+1,\dots,n\}}f_j(x)$.
The latter is impossible
since $t=\max T$
and the former is impossible
by the definition of $T$; we have thus justified \eqref{eqn:interior}.
\medskip

We now establish the first item: if $x\in I_n$ is such that each $f_i$ for which $x\in I_i$ is continuous at $x$, then $g$ is also continuous at $x$.
It is well-known that the minimum of continuous functions on a common domain is also continuous, so the claim is immediate if $x\in\partial I_n$; thus suppose that $x\in I_n^\circ$.
Without loss of generality, we may suppose that if $x\in\partial I_i$, then $x$ is a right-endpoint of $I_i$.
Let $T\subseteq[n]$ be the set of all $i\in[n]$ for which $x\in I_i^\circ$.
Additionally, let $T'\subseteq[n]$ be the set of all $i\in[n]$ for which $x\in I_i$.
Again, since the minimum of continuous functions on a common domain is also continuous, we know that $\min_{i\in T}f_i$ is continuous at $x$ and that $\min_{i\in T'}f_i$ is left-continuous at $x$.
Thus, the only way for $g$ to fail to be continous at $x$ is if $\min_{i\in T'}f_i(x)\neq\min_{i\in T}f_i(x)$, which is impossible due to \eqref{eqn:interior}.
\medskip

We next establish the second item, which is more interesting: if $f_i$ is concave on $I_i$, then $g$ is concave on $I_n$.
We begin with a ``stitching'' lemma:
\begin{lemma}\label[lemma]{concaveSteps}
    Fix numbers $a_1<a_2<\cdots<a_m$ and suppose that $h\colon[a_1,a_m]\to\R$ is concave on $[a_i,a_{i+1}]$ for all $i\in[m-1]$.
    For each $i\in\{2,\dots,m-1\}$ suppose that $H_i\colon[a_{i-1},a_{i+1}]\to\R$ is a concave function satisfying $h\leq H_i$ and $h(a_i)=H_i(a_i)$.
    Then $h$ is concave on $[a_1,a_m]$.
\end{lemma}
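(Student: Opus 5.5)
The plan is to prove concavity through the characterization that a function is concave on an interval if and only if it is concave in a neighborhood of every point, i.e.\ if it is locally concave. More concretely, I will use the chord (three-point slope) characterization. A function $h$ on $[a_1,a_m]$ is concave iff for all $x<y<z$ in $[a_1,a_m]$,
\[
    \frac{h(y)-h(x)}{y-x}\geq\frac{h(z)-h(y)}{z-y}.
\]
The argument splits into two ingredients, a local one and a global one. The local ingredient says that $h$ is concave on each of $[a_{i-1},a_{i+1}]$ for $i\in\{2,\dots,m-1\}$. The global ingredient says that concavity on a chain of overlapping intervals implies concavity on their union.

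For the local step, fix $i$ and work on $[a_{i-1},a_{i+1}]$, where $h$ is concave on each of the two halves $[a_{i-1},a_i]$ and $[a_i,a_{i+1}]$. It suffices to check that the one-sided derivatives satisfy $h'_-(a_i)\geq h'_+(a_i)$. Both exist (possibly infinite only at the outer endpoints, not at $a_i$) because $h$ is concave on each half. The concave majorant $H_i$ touches $h$ at $a_i$. Hence for $s<a_i$ we get $\frac{h(a_i)-h(s)}{a_i-s}\geq\frac{H_i(a_i)-H_i(s)}{a_i-s}$, which gives $h'_-(a_i)\geq (H_i)'_-(a_i)$. Symmetrically, $h'_+(a_i)\leq (H_i)'_+(a_i)$. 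Concavity of $H_i$ gives $(H_i)'_-(a_i)\geq (H_i)'_+(a_i)$, and chaining these yields $h'_-(a_i)\geq h'_+(a_i)$.

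A piecewise concave function whose derivative jumps downward at the junction is concave. I would verify this via the slope criterion: $h'_+$ is nonincreasing on $[a_{i-1},a_i)$, on $[a_i,a_{i+1})$, and across $a_i$. Then $h(z)-h(y)=\int_y^z h'_+$, using absolute continuity of concave functions on the interior together with continuity at the interior point $a_i$, which follows from concavity on each side. From this the chord inequality follows. Alternatively, and more cleanly, I could avoid derivatives altogether by comparing against $H_i$ directly: for $x<a_i<z$ and a point $y$ between them, write $y$ as a convex combination, reduce to the case where $y=a_i$ using concavity on each side, and then use $h(x)\leq H_i(x)$, $h(z)\leq H_i(z)$ and $h(a_i)=H_i(a_i)$ to obtain $h(a_i)\geq\lambda h(x)+(1-\lambda)h(z)$. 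The reduction from general $y$ to $y=a_i$ is the delicate bit. I would handle it with the slope formulation: the slope of $h$ from $x$ to $a_i$ is at least the slope from $a_i$ to $z$ (by the $H_i$ comparison), and within each side the slopes are monotone.

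For the global step, I will prove the following by induction on $m$: if $h$ is concave on $[a_1,a_{k}]$ and on $[a_{k-1},a_{k+1}]$, then it is concave on $[a_1,a_{k+1}]$. The standard fact used here is that if a function is concave on two intervals whose intersection has nonempty interior, then it is concave on their union. I would prove this by the three-slope criterion: any triple $x<y<z$ either lies in one of the intervals, or can be bridged by inserting a point $w$ in the overlap and using monotonicity of slopes on each side.

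I expect the main obstacle to be the junction argument at $a_i$, namely translating ``touched from above by a concave $H_i$'' into the chord inequality across $a_i$ without assuming differentiability. The slope-monotonicity bookkeeping there needs care, while everything else is routine.
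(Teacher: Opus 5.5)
Your proposal is correct, and its key step is the same as the paper's. Write $\sigma_f(s,t)=\frac{f(t)-f(s)}{t-s}$ and $\sigma=\sigma_h$. At a junction you use
\[
\sigma_h(s,a_i)\ge\sigma_{H_i}(s,a_i)\ge\sigma_{H_i}(a_i,t)\ge\sigma_h(a_i,t)\qquad(a_{i-1}\le s<a_i<t\le a_{i+1}),
\]
which is exactly the displayed chain in the paper's proof.

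The difference is in how the pieces are put together. The paper handles an arbitrary triple $x<y<z$ in one pass. It inserts every breakpoint strictly between $x$ and $z$, together with $y$, and shows the consecutive chord slopes are nonincreasing: concavity of a single piece handles non-breakpoints, and the $H_j$ chain handles breakpoints. It then compares the weighted averages of these slopes on either side of $y$.

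You instead prove concavity on each $[a_{i-1},a_{i+1}]$ (a three-point version of the same argument) and then glue by induction. The gluing uses the fact that concavity on two closed intervals whose overlap has nonempty interior gives concavity on their union. This is valid, but the gluing lemma is itself a slope-bridging argument of the same kind, and it needs two points in the overlap, not one. Take $a<b<c<d$ with $h$ concave on $[a,c]$ and $[b,d]$, and suppose $x<y\le b$ and $z>c$. Choosing $b<v<w<c$ gives $\sigma(x,y)\ge\sigma(y,w)\ge\sigma(v,w)\ge\sigma(w,z)$, hence $\sigma(y,z)\le\sigma(y,w)\le\sigma(x,y)$. So the paper's single-pass argument is more economical, while yours isolates the reusable principle that concavity on an interval is a local property.

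One caution about the derivative variant of your local step, which you rightly set aside. Concavity on each half does not by itself give finiteness of $h'_{\pm}(a_i)$ or continuity at $a_i$. For example, $h=0$ on $[-1,1]\setminus\{0\}$ with $h(0)=-1$ is concave on $[-1,0]$ and on $[0,1]$, yet $h'_-(0)=-\infty$ and $h$ jumps at $0$. Both facts hold here only because of the majorant $H_i$. The outer endpoints $a_{i\pm1}$ can also carry downward jumps, so the representation $h(z)-h(y)=\int_y^z h'_+$ needs extra care there. The pure slope route avoids all of this.
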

\begin{proof}
    Recall that a function $f$ is concave if and only if ${f(y)-f(x)\over y-x}\geq{f(z)-f(y)\over z-y}$ for all $x<y<z$.
    Fix any $a_1\leq x<y<z\leq a_m$ and consider the sequence $x=b_0<b_1<\dots<b_k=z$ where $b_1,\dots,b_{k-1}$ are exactly the elements of $a_1,\dots,a_m$ between $x$ and $z$ along with the element $y$; suppose that $y=b_t$.
    Define $s_i={h(b_i)-h(b_{i-1})\over b_i-b_{i-1}}$ for $i\in[k]$.
    We begin by claiming that $s_1\geq s_2\geq\dots\geq s_k$.
    To see this, fix $i\in[k-1]$ and suppose first that $b_i\notin\{a_1,\dots,a_m\}$.
    In this case, $[b_{i-1},b_{i+1}]\subseteq[a_j,a_{j+1}]$ for some $j$ and so $s_i\geq s_{i+1}$ since $h$ is concave on $[a_j,a_{j+1}]$.
    Suppose next that $b_i=a_j$ for some $j$, so $[b_{i-1},b_i]\subseteq[a_{j-1},a_j]$ and $[b_i,b_{i+1}]\subseteq[a_j,a_{j+1}]$.
    Now, using the assumptions on $h$ and $H_j$, we have
    \[
        s_i={h(a_j)-h(b_{i-1})\over a_j-b_{i-1}}\geq{H_j(a_j)-H_j(b_{i-1})\over a_j-b_{i-1}}\geq{H_j(b_{i+1})-H_j(a_j)\over b_{i+1}-a_j}\geq{h(b_{i+1})-h(a_j)\over b_{i+1}-a_j}=s_{i+1}.
    \]
    With this in-hand, setting $\delta_i=b_i-b_{i-1}$, we use the fact that weighted averages are bounded above and below by the largest and smallest elements, respectively, to bound
    \[
        {h(y)-h(x)\over y-x}={\sum_{i=1}^t\delta_is_i\over \sum_{i=1}^t\delta_i}\geq s_t\geq s_{t+1}\geq{\sum_{i=t+1}^k\delta_is_i\over\sum_{i=t+1}^k\delta_i}={h(z)-h(y)\over z-y}.\qedhere
    \]
\end{proof}
We now apply the lemma to our situation.
Let $a_1<a_2<\dots<a_m$ be the distinct endpoints of $I_1,\dots,I_n$ and, for $i\in\{2,\dots,m-1\}$, define $T_i\eqdef\{j\in[n]:a_i\in I_j^\circ\}$.
Observe that $j\in T_i$ if and only if $I_j\supseteq[a_{i-1},a_{i+1}]$.
From here, define the functions $G_i\colon[a_{i-1},a_{i+1}]\to\R$ by $G_i=\min_{j\in T_i}f_j$.
By construction, each $G_i$ is concave since the minimum of concave functions on a common domain is also concave; additionally, $G_i\geq g$ on $[a_{i-1},a_{i+1}]$ and $G_i(a_i)=g(a_i)$ due to \eqref{eqn:interior}.
Thus, we may apply \Cref{concaveSteps} with $h\gets g$ and $H_i\gets G_i$ to conclude that $g$ is concave on $[a_1,a_m]=I_n$.

\section{Proof of \Cref{rectilinear}}\label[appendix]{sec:rectilinear}

We need to show that
\[
    \lim_{\epsilon\to 0^+}{\lebesgue^n\bigl((A+\epsilon B_\infty)\cap[0,1]^n\bigr)-\lebesgue^n\bigl((A+\epsilon B_p)\cap[0,1]^n\bigr)\over\epsilon}=0.
\]
To prove this, since $B_\infty\supseteq B_p$, it suffices to show that
\[
    \lim_{\epsilon\to 0^+}{\lebesgue^n(A+\epsilon B_\infty)-\lebesgue^n(A+\epsilon B_p)\over\epsilon}\leq 0.
\]

Consider first the case when $A=\prod_{i=1}^d[x_i,y_i]$.
Naturally, $A+\epsilon B_\infty=\prod_{i=1}^d[x_i-\epsilon,y_i+\epsilon]$.
In particular,
\begin{align*}
    \lebesgue^n(A+\epsilon B_\infty)=\prod_{i=1}^d(y_i-x_i+2\epsilon)=\prod_{i=1}^d(y_i-x_i)+2\epsilon\sum_{t=1}^d\prod_{i\in[d]\setminus\{t\}}(y_i-x_i)+P\epsilon^2,
\end{align*}
where $P$ is some polynomial in $\epsilon$ and the $x_i$'s and $y_i$'s.

Additionally,
\[
    A+\epsilon B_p\supseteq\prod_{i=1}^{t-1}[x_i,y_i]\times[x_i-\epsilon,y_i+\epsilon]\times\prod_{i=t+1}^d[x_i,y_i]
\]
for every $t\in[d]$.
Thus,
\[
    \lebesgue^n(A+\epsilon B_p)\geq \prod_{i=1}^d(y_i-x_i)+2\epsilon\sum_{t=1}^d\prod_{i\in[d]\setminus\{t\}}(y_i-x_i)
\]
We conclude that
\[
    \lim_{\epsilon\to 0^+}{\lebesgue^n(A+\epsilon B_\infty)-\lebesgue^n(A+\epsilon B_p)\over\epsilon}\leq\lim_{\epsilon\to 0^+}{P\epsilon^2\over\epsilon}=0.
\]

Next, consider the case when $A=\bigcup_{i=1}^n R_i$ where each $R_i$ is a box.
Since $(A+\epsilon B_\infty)\setminus (A+\epsilon B_p) = \bigcup_{i=1}^n (R_i + \epsilon B_\infty) \setminus (A+\epsilon B_p)\subseteq\bigcup_{i=1}^n(R_i+\epsilon B_\infty)\setminus(R_i+\epsilon B_p)$,
\[
    \lim_{\epsilon\to 0^+}{\lebesgue^n(A+\epsilon B_\infty)-\lebesgue^n(A+\epsilon B_p)\over\epsilon}\leq\sum_{i=1}^n\lim_{\epsilon\to 0^+}{\lebesgue^n(R_i+\epsilon B_\infty)-\lebesgue^n(R_i+\epsilon B_p)\over\epsilon}=0.\qedhere
\]

\end{document}